\newcommand{\R}{\mathbb{R}}
\newcommand{\E}{\mathbb{E}}
\newcommand{\dif}{\mathop{}\!\mathrm{d}}
\theoremstyle{plain}
\newtheorem{theorem}{Theorem}[section]
\newtheorem{corollary}[theorem]{Corollary}
\theoremstyle{definition}
\newtheorem{definition}[theorem]{Definition}
\theoremstyle{remark}
\newtheorem{remark}[theorem]{Remark}
\numberwithin{equation}{section}
\title{Multi-Agent Relative Investment Games in a Jump Diffusion Market with Deep Reinforcement Learning Algorithm}
\author{Liwei Lu\thanks{Department of Mathematical Science, Tsinghua University, Beijing, 100084, China, \em{llw20@mails.tsinghua.edu.cn}.} \and Ruimeng Hu\thanks{Department of Mathematics, and Department of Statistics and Applied Probability, University of California, Santa Barbara, CA 93106-3080, USA, \em{rhu@ucsb.edu}.} \and Xu Yang\thanks{Department of Mathematics, University of California, Santa Barbara, CA 93106-3080, USA, \em{xuyang@math.ucsb.edu}.} \and Yi Zhu\thanks{Yau Mathematical Sciences Center, Tsinghua University, Beijing, 100084, China and Beijing Institute of Mathematical Sciences and Applications, Beijing, 101408, China, \em{yizhu@tsinghua.edu.cn}.}}
\begin{document}
\maketitle

\begin{abstract}
    This paper focuses on multi-agent stochastic differential games for jump-diffusion systems. On one hand, we study the multi-agent game for optimal investment in a jump-diffusion market. We derive constant Nash equilibria and provide sufficient conditions for their existence and uniqueness for exponential, power, and logarithmic utilities, respectively. On the other hand, we introduce a computational framework based on the actor-critic method in deep reinforcement learning to solve the stochastic control problem with jumps. We extend this algorithm to address the multi-agent game with jumps and utilize parallel computing to enhance computational efficiency. We present numerical examples of the Merton problem with jumps, linear quadratic regulators, and the optimal investment game under various settings to demonstrate the accuracy, efficiency, and robustness of the proposed method. In particular, neural network solutions numerically converge to the derived constant Nash equilibrium for the multi-agent game.

\end{abstract}

\textbf{Keywords:}
Optimal investment games, relative performance, jump-diffusion models, Merton problem, deep reinforcement learning, actor-critic method, fictitious play

\section{Introduction}

Real-world financial markets often experience sudden price movements due to abrupt changes in market sentiment, liquidity constraints, or trading activities during specific periods. These discontinuous behaviors can occur across various time scales \cite{rama2004financial}. Modeling them with jump processes not only allows us to explicitly capture these sudden price changes, leading to a more accurate representation of market behavior, but also enables us to better capture the heavy-tailed behavior of asset price returns \cite{levy2000renewal}, volatility clustering observed in financial markets, and other phenomena. All of these motivate us to employ jump models in studying financial problems. 

In this paper, we focus on analyzing multi-agent investment games within a jump-diffusion market, alongside developing efficient deep reinforcement learning (RL) algorithms. In the first part, building on the framework and notion of \cite{lacker2019mean}, we consider the game where  $n$-agents invest in a jump-diffusion market with each trading between an individual risky asset and a common riskless asset. The objective is to maximize their utility, which depends on both their own states and their peer's performance. We analyze the game under both constant absolute risk aversion (CARA) and constant relative risk aversion (CRRA),  deriving a semi-explicit Nash equilibrium in each case.
In the second part, our goal is to develop efficient algorithms for solving games where the states are modeled by general It\^o-L\'evy processes, and both the diffusion and jump terms are controllable. Recognizing the absence of such algorithms in the literature even for stochastic control problems under this setting, we first introduce an actor-critic RL algorithm to solve the control problem. We then extend it to the game setting by combining it with fictitious play \cite{brown1949some,brown1951iterative} and parallel computing techniques \cite{zhang2020deep, babaeizadeh2016reinforcement}.

\smallskip
\noindent\textbf{Related literature.} In recent years, multi-agent investment games have attracted significant attention. Since the seminal paper  \cite{lacker2019mean}, which provided an explicit Nash equilibrium in a log-normal market, numerous extensions have been explored. These include forward utility \cite{anthropelos2022competition,dos2022forward}, stochastic volatility \cite{kraft2020dynamic}, investment with consumption \cite{lacker2020many,dos2022forward}, using a probabilistic approach \cite{fu2020mean,tanana2023relative}, general incomplete markets with random coefficients \cite{hu2022n}, non-exponential discounting utility \cite{liang2023time}, graphon games \cite{tangpi2024optimal}, financial markets with delay dynamics \cite{balkin2023stochastic}, and arbitrage-free semimartingale financial markets \cite{bauerle2023nash}, to list a few. Among them, the most closely related works are \cite{bo2022approximating}, which considers consumption in jump-diffusion markets; and \cite{bo2024mean}, which models jump risk using a mutually exciting Hawkes process. In comparison to \cite{bo2022approximating,bo2024mean}, our stylized model, to the best of our knowledge, is the first one to provide a unique semi-explicit constant Nash equilibrium in jump-diffusion markets at the $n$-agent level.  

With the rapid development of graphic processing units (GPU), deep learning-based algorithms have advanced significantly, demonstrating remarkable ability to represent and approximate high-dimensional functions across various domains. In the direction of solving high-dimensional continuous-time stochastic control problems when the system is subject to Brownian noise, these include direct parameterization methods \cite{han2016deep,han2020rnn}, partial differential equation (PDE)-based methods \cite{han2017deep,sirignano2018dgm,al2019applications,raissi2019physics}, and backward stochastic differential equations (BSDE)-based methods \cite{han2017deep,HaJeE:18,hure2020deep,ji2020three,beck2019machine}. For finite-player games and mean-field games, a non-exhaustive list includes \cite{Hu2:19,han2020deep,xuan2020ams, carmona2019convergence2,lauriere2024deep}. 
When combined with reinforcement learning ideas, researchers have explored policy gradient methods such as those in \cite{munos2006policy,zhou2021actor} and convergence analysis in  \cite{zhou2023policy,zhou2024solving}.  For specific applications, \cite{wang2020continuousmeanvar,wang2020reinforcement} address mean-variance portfolio problems and linear-quadratic problems, while \cite{jia2021policy,jia2021policy2} offer insights into policy-based and value function-based RL methods for generic continuous-time and space problems. These methods have been extended in various directions, including variance reduction techniques~\cite{kobeissi2022variance}, risk-aware problems~\cite{jaimungal2022robustrl} and mean-field games~\cite{angiuli2023deep,lauriere2022learning}. For a survey of recent developments in this field we refer readers to \cite{hu2023recent}.

While the literature on machine learning solvers for problems under Brownian noise is quite rich, problems involving jumps have received only limited attention. Several recent attempts \cite{gnoatto2022deep,castro2022deep,boussange2023deep,frey2022deep,frey2022convergence,alasseur2024deep} have been made to solve the corresponding partial integro-differential equations (PIDEs) or BSDEs, as extensions of methods like Deep BSDE \cite{han2017deep}, DBDP \cite{hure2020deep}, and deep splitting methods \cite{beck2021deep}. However, the PIDEs or BSDEs treated correspond to linear or semilinear problems (problems with control in the drift term only), and therefore are not tailed to fully nonlinear control problems with controlled diffusion and jump terms, let alone the game setting, or consideration of RL ideas.  In contrast to the aforementioned literature on PIDEs or BSDEs, this paper focuses on stochastic control and games involving control in both the diffusion and jump terms, which correspond to solving fully nonlinear PIDEs. Instead of addressing the PIDE directly or indirectly through the corresponding BSDE using the Feynman-Kac formula, we approach the control problem using deep reinforcement learning (RL) techniques. Specifically, we approximate both the value and control functions using deep neural networks and develop an actor-critic method consisting of two steps: policy evaluation, which updates the value function associated with a given control, and policy improvement, which finds the control that maximizes this value function. This alternating scheme proves to be highly efficient and stable. We then extend this method to the game setting. For further related studies, see the recent works \cite{georgoulis2024deep, gennaro2024delegated, mastrolia2025optimal}.

\smallskip
\noindent\textbf{Our contribution.} In this paper, we focus on the multi-agent optimal investment game in a jump-diffusion market, as well as develop efficient deep RL algorithms for games where the states are modeled by general It\^o-L\'evy processes. Our main contributions are as follows: 
\begin{enumerate}
  \item We analyze an optimal investment game among $n$-agents, each trading between a common bond and an individual stock subject to Poisson jumps, with the goal of maximizing their utility relative to their peers. We develop semi-explicit Nash equilibria under exponential, power, and logarithmic utilities, and provide sufficient conditions for the existence and uniqueness of constant Nash equilibrium. These semi-explicit results not only enhance our understanding of this stylized model, but also serve as benchmarks to deep RL solvers proposed in the second part. \label{item.contribution1}
  
  \item We address the lack of algorithms, even for fully nonlinear control problems. We begin by proposing a framework based on the actor-critic method in deep reinforcement learning to numerically solve general stochastic control problems with jumps, particularly focusing on cases with control in both the diffusion and jump terms. We then conduct a rigorous numerical comparison under different settings in terms of accuracy, running time, and stability.
  \label{item.contribution2}
  
  \item Additionally, we develop numerical methods based on the framework presented in \ref{item.contribution2} to solve multi-agent games, where the states are modeled by general Itô-Lévy processes. The implementation of parallel computing significantly improves computational efficiency, and we provide numerical demonstrations of the robustness of our deep RL method in computing the Nash equilibrium in the multi-agent game presented in the first part of the paper.
\end{enumerate}

The rest of the paper is organized as follows. In Section~\ref{sec.game}, we introduce the $n$-player optimal investment games under relative performance criteria and derive the semi-explicit Nash equilibria when the agents possess CARA or CRRA utilities and trade in the stock market subject to jump risks. We provide conditions for the existence and uniqueness of such equilibria. Section \ref{sec.algorithm} first develops an actor-critic type deep reinforcement learning framework for solving stochastic control problems with jumps. Then combining it with fictitious play, we generalize the framework to find Nash equilibrium in stochastic differential games. In Section \ref{sec.numerical}, we conduct extensive numerical experiments on stochastic control and multi-agent games to demonstrate the efficiency and accuracy of the proposed algorithms. Finally, Section \ref{sec.conclusion} provides conclusions and outlines possible directions for future work.


\section{The multi-agent portfolio game} \label{sec.game}
We consider an $n$-agent game in which they trade in a jump diffusion market with a common investment horizon. The agents are heterogeneous, having CARA or CRRA risk preferences with different parameters, and aim to maximize their expected utility based on the relative performance of their peers. This section will focus on deriving semi-explicit Nash equilibria under various utility functions.

In this $n$-agent game, all agents, indexed by $i \in \mathcal{I}:= \{1, 2, \cdots, n\}$, trade two assets, a riskless bond and a stock $S_t^i$. Without loss of generality, the riskless bond is assumed to have zero interest rate. For tractability purposes, we consider the stock $S_t^i$ driven by Possion jumps:
\begin{equation} \label{eq.stock}
  \dif S_t^i = S_{t-}^i (\mu_i\dif t+\nu_i\dif W_t^i+\sigma_i\dif B_t+\alpha_i\dif M_t^i+\beta_i\dif M_t^0), \quad i \in \mathcal{I} :=\{1, 2, \cdots, n\},
\end{equation}
with the constant $\mu_i>0$, $\nu_i\geq0$, $\sigma_i\geq0$, $\nu_i^2+\sigma_i^2>0$, $\alpha_i\in\R$, $\beta_i\in\R$. Here, the Brownian motions $B_t, W_t^1, \cdots, W_t^n$ are independent and defined on the filtered probability space $(\Omega,\mathcal{F},\mathbb{F} =(\mathcal{F}_t)_{t\geq0},\mathbb{P})$ with $\mathbb{F}$ being the natural filtration; $N_t^0, N_t^1, \cdots, N_t^n$ are adapted independent Poisson process with intensities $\lambda_0, \lambda_1, \cdots, \lambda_n$; and $M_t^i=N_t^i-\lambda_i t$, $i \in \mathcal{I}\cup\{0\}$ are the corresponding compensated Poisson processes. The terms $B_t$ and $M_t^0$ appearing in all stocks $\{S_t^i\}_{i \in \mathcal{I}}$ reflect the correlation between different stocks and can be viewed as \emph{common noise}.
When $\mu_i=\mu$, $\nu_i=\alpha_i=0$, $\sigma_i=\sigma$, $\beta_i=\beta$, the game degenerates into the scenario where $n$ agents are investing in the same stock.

Each agent has her own utility function $U_i$, which depends on both her terminal wealth and the average of her peers'. In the sequel, we consider two commonly used types of utilities: CARA (exponential) and CRRA (power and logarithmic), and present the derivations in Sections~\ref{sec.exp}--\ref{sec.log}, respectively. 

\subsection{CARA: Exponential utility} \label{sec.exp}
For the $i$-th agent, let $\{\pi_t^i\}_{0\leq t\leq T}$  be the dollar amount of wealth she puts into her own stock $S_t^i$. Assume the portfolio is self-financing, then her wealth process $X_t^i\in\R$ satisfies:
\begin{equation} \label{eq.dXt_exp}
  \dif X_t^i = \frac{\pi_t^i}{S_{t-}^i} \dif S_t^i = \pi_t^i(\mu_i\dif t+\nu_i\dif W_t^i+\sigma_i\dif B_t+\alpha_i\dif M_t^i+\beta_i\dif M_t^0), \quad X_0^i = x_0^i, \; i \in \mathcal{I}. 
\end{equation}
For the exponential case, agent $i$'s utility function $U_i$ depends on her own position and the arithmetic average of others':
\begin{equation} \label{eq.utility_exp}
  U_i(x_1,x_2,\cdots,x_n) = -e^{-\frac{1}{\delta_i}(x_i-\theta_i\frac{1}{n}\sum_{k=1}^nx_k)},
\end{equation}
where $\delta_i>0$ denotes the absolute risk tolerance and $\theta_i \in (0,1)$ models the agent's interaction towards her peers. The expected utility for $i$-th agent, following a given set of trading strategies $(\pi^1, \cdots, \pi^n)$, is defined as:
\begin{equation} \label{eq.payoff}
  J_i(t, x_1, \cdots, x_n; \pi^1, \cdots, \pi^n) = \E^{t,x_1,\cdots,x_n} \left[U_i(X_T^1,\cdots,X_T^n)\right],
\end{equation}
and she aims to maximize $J_i$ over all possible $\pi^i$. Note that this is not only influenced by her own strategy but also by others'. Therefore, it is natural to consider the following notion of solutions in multi-agent games.
\begin{definition}[Nash equilibrum] \label{def.Nash}
  A tuple of strategies $(\pi_1^*,\cdots,\pi_n^*)$ defined on [0,T] is called a Nash equilibrium if, under the initial condition $(X_0^1,\cdots,X_0^n)=(x_0,\cdots,x_n):=\overrightarrow{x}$, for any agent $i \in \mathcal{I}$ and any strategy $\pi_i$, we have
  \begin{equation*}
    J_i(0,\overrightarrow{x};\pi_1^*,\cdots,\pi_{i-1}^*,\pi_i^*,\pi_{i+1}^*,\cdots,\pi_n^*) \geq J_i(0,\overrightarrow{x};\pi_1^*,\cdots,\pi_{i-1}^*,\pi_i,\pi_{i+1}^*,\cdots,\pi_n^*).
  \end{equation*}
  In particular, a constant Nash equilibrium refers to a Nash equilibrium that is independent of time $t$ and state $\overrightarrow{x}$.
\end{definition}

We next present the main result of this subsection, concerning the constant Nash equilibrium in the case of exponential utility.

\begin{theorem} \label{thm.exp}
  Assume $\mu_i>0$, $\nu_i\geq0$, $\sigma_i\geq0$, $\nu_i^2+\sigma_i^2>0$, $\delta_i>0$ and $\theta_i \in (0,1)$, then any constant Nash equilibrium $(\pi_1^*,\cdots,\pi_n^*)$ satisfies the coupled system:
  \begin{equation} \label{eq.thm_exp}
    \begin{aligned}
      \mu_i + \frac{1}{\delta_i}\sigma_i\theta_i\widehat{\pi^*\sigma} & - \frac{1}{\delta_i}(1-\frac{\theta_i}{n})(\nu_i^2+\sigma_i^2)\pi^*_i - \lambda_0\beta_i - \lambda_i\alpha_i \\ 
      & + \lambda_0\beta_i e^{-\frac{1}{\delta_i}\left((1-\frac{\theta_i}{n})\pi_i^*\beta_i-\theta_i\widehat{\pi^*\beta}\right)} + \lambda_i\alpha_i e^{-\frac{1}{\delta_i}(1-\frac{\theta_i}{n})\pi_i^*\alpha_i} = 0, \quad \forall i \in \mathcal{I},
    \end{aligned}
  \end{equation}
  where
    $\widehat{\pi^*\sigma} := \frac{1}{n} \sum_{k\neq i} \pi^*_k \sigma_k$ and 
    $\widehat{\pi^*\beta} := \frac{1}{n} \sum_{k\neq i} \pi^*_k \beta_k$. If system \eqref{eq.thm_exp} has a unique solution, the constant Nash equilibrium is unique.
\end{theorem}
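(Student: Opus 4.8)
The plan is to characterize the equilibrium through single-agent stochastic control. Fixing the opponents' strategies at the constants $(\pi_k^*)_{k\neq i}$, I study agent $i$'s best-response problem of maximizing $J_i$ over admissible $\pi^i$, and write $V_i(t,\overrightarrow{x})$ for the associated value function. First I would set up the Hamilton--Jacobi--Bellman partial integro-differential equation $\partial_t V_i + \sup_{\pi^i} \mathcal{L}^{\pi^i} V_i = 0$ with terminal data $V_i(T,\overrightarrow{x}) = U_i(\overrightarrow{x})$, where $\mathcal{L}^{\pi^i}$ is the generator of the controlled jump-diffusion $(X^1,\dots,X^n)$ with the $j\neq i$ components driven by the frozen controls. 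This generator splits into a drift part with coefficients $\pi^j(\mu_j - \alpha_j\lambda_j - \beta_j\lambda_0)$ coming from the compensators, a second-order part assembling the independent $W^j$ and the common $B$, and two finite-difference jump operators: one for each idiosyncratic $N^i$ (shifting only the $i$-th coordinate by $\pi^i\alpha_i$) and one for the common $N^0$ (shifting every coordinate $j$ by $\pi^j\beta_j$ simultaneously).

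The decisive step is the separation-of-variables ansatz dictated by the CARA structure,
\[
  V_i(t,\overrightarrow{x}) = -\exp\!\left(-\frac{1}{\delta_i}\Big(x_i - \theta_i\frac{1}{n}\sum_{k=1}^n x_k\Big) + g_i(t)\right), \qquad g_i(T)=0.
\]
Since every first and second derivative of $V_i$ is a constant multiple of $V_i$ and each jump shift multiplies $V_i$ by an explicit exponential factor, substituting the ansatz factors out the strictly positive exponential and makes the remaining bracket entirely independent of $\overrightarrow{x}$. The idiosyncratic jump contributes the factor $\exp(-\tfrac{1}{\delta_i}(1-\tfrac{\theta_i}{n})\pi^i\alpha_i)$, while the common jump---because it displaces all coordinates and thus probes the coupling term $\theta_i\frac1n\sum_k x_k$---produces exactly $\exp(-\tfrac{1}{\delta_i}((1-\tfrac{\theta_i}{n})\pi^i\beta_i - \theta_i\widehat{\pi^*\beta}))$, matching the structure of \eqref{eq.thm_exp}. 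What remains is $-g_i'(t) + \sup_{\pi^i} \Phi_i(\pi^i) = 0$ with $\Phi_i$ a state- and time-independent function of the single scalar $\pi^i$, which forces $g_i$ to be affine in $t$ and the maximizer to be a constant---confirming that the ansatz is self-consistent and that the best response is indeed a constant strategy.

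I would then observe that $\Phi_i$ is concave in $\pi^i$: the diffusion block $-\tfrac1{2\delta_i^2}[(\pi^i\nu_i(1-\tfrac{\theta_i}{n}))^2 + ((1-\tfrac{\theta_i}{n})\pi^i\sigma_i - \theta_i\widehat{\pi^*\sigma})^2]$ is a negative quadratic, and the two jump terms are negative exponentials, while the quadratic also furnishes coercivity; hence the first-order condition $\partial_{\pi^i}\Phi_i = 0$ has a unique root that characterizes the global maximizer. Computing this derivative, multiplying through by $\delta_i/(1-\theta_i/n)$ (note $1-\theta_i/n>0$ since $\theta_i\in(0,1)$), and evaluating at $\pi^i = \pi_i^*$ yields precisely \eqref{eq.thm_exp}. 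For the necessity claim, if $(\pi_1^*,\dots,\pi_n^*)$ is a constant Nash equilibrium then each $\pi_i^*$ is agent $i$'s optimal response to the frozen opponents, hence coincides with the unique maximizer above and must satisfy \eqref{eq.thm_exp}; consequently, if the coupled system admits a unique solution, there can be at most one constant Nash equilibrium.

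The main obstacle is not the algebra but the verification step: I must confirm that the explicit ansatz is genuinely the value function and that the constant candidate is optimal among \emph{all} admissible controls, not merely among constant ones. This requires checking admissibility and integrability so that the compensated-Poisson and Brownian stochastic integrals arising from $\mathcal{L}^{\pi^i}V_i$ are true martingales rather than local ones---delicate here because the exponential utility interacts with the jump sizes $\alpha_i,\beta_i$ and could in principle destroy the exponential moments. I would control this using the finite horizon $T$, the boundedness of the constant coefficients, and the finiteness of the expected jump variation, and then close the argument with a standard It\^o--L\'evy verification theorem applied to $V_i(t,\overrightarrow{X}_t)$.
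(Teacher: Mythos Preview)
Your proposal is correct and follows essentially the same route as the paper: HJB for agent $i$'s best response with the others frozen at constants, a multiplicative separation-of-variables ansatz dictated by the CARA form, strict concavity of the reduced Hamiltonian in $\pi^i$, the first-order condition yielding \eqref{eq.thm_exp}, and a verification step. The one structural difference is that the paper first collapses the opponents' coordinates into a single scalar $Y^i_t := \tfrac{1}{n}\sum_{k\neq i} X^k_t$, so the HJB lives on $[0,T]\times\mathbb{R}^2$ with ansatz $v^i(t,x,y)=f(t)\,\widetilde U_i(x,y)$, whereas you work directly with the full $n$-dimensional state and the ansatz $V_i(t,\overrightarrow{x})=-\exp(-\tfrac{1}{\delta_i}(x_i-\tfrac{\theta_i}{n}\sum_k x_k)+g_i(t))$. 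The paper's reduction streamlines the generator---the idiosyncratic jumps $M^k$ for $k\neq i$ act only on the $y$-variable and visibly drop out of the first-order condition---while your formulation makes the common-jump coupling $e^{-\tfrac{1}{\delta_i}((1-\theta_i/n)\pi^i\beta_i - \theta_i\widehat{\pi^*\beta})}$ appear more transparently as the effect of a simultaneous shift in all coordinates. Either way the algebra and the concavity/verification logic are identical; your $g_i$ and the paper's $f$ are related by $f=e^{g_i}$.
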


\begin{proof}
  Consider the wealth process of the $i$-th agent $X_t^i$ and the sum of the remaining agents' $Y_t^i :=\frac{1}{n}\sum_{k\neq i} X_t^k$, assuming that all other agents ($k\neq i$) use constant strategies $\pi_k$:
  \begin{equation}\label{eq.dXtdYt_exp}
  \begin{aligned}
    \dif X_t^i &= \pi_i(\mu_i\dif t+\nu_i\dif W_t^i+\sigma_i\dif B_t+\alpha_i\dif M_t^i+\beta_i\dif M_t^0), \\
    \dif Y_t^i &= \widehat{\pi\mu}\dif t + \frac{1}{n}\sum_{k\neq i} \pi_k\nu_k\dif W_t^k + \widehat{\pi\sigma}\dif B_t + \frac{1}{n}\sum_{k\neq i} \pi_k\alpha_k \dif M_t^k + \widehat{\pi\beta}\dif M_t^0,
  \end{aligned}
\end{equation}
  where
    $\widehat{\pi\mu} := \frac{1}{n} \sum_{k\neq i} \pi_k \mu_k$, 
    $\widehat{\pi\sigma} := \frac{1}{n} \sum_{k\neq i} \pi_k \sigma_k$, and 
   $ \widehat{\pi\beta} := \frac{1}{n} \sum_{k\neq i} \pi_k \beta_k$.
Meanwhile, the utility function $U_i$ can also be rewritten in terms of $X_T^i$ and $Y_T^i$:
  \begin{equation*}
    U_i(\overrightarrow{X}_T) = -e^{-\frac{1}{\delta_i}\left(X_T^i-\theta_i\frac{1}{n}\sum_{k=1}^n X_T^k\right)} = -e^{-\frac{1}{\delta_i}\left((1-\frac{\theta_i}{n})X_T^i-\theta_i\frac{1}{n}\sum_{k\neq i} X_T^k\right)} := \widetilde{U}_i(X_T^i,Y_T^i),
  \end{equation*}
  where 
    $\widetilde{U}_i(x,y) = -e^{-\frac{1}{\delta_i}((1-\frac{\theta_i}{n})x-\theta_i y)}$.
 
For agent $i$'s utility maximization problem, define the value function $v^i$ by
  \begin{equation*}
    v^i(t,x,y) = \sup_{\pi_i} \E^{t,x,y} \left[\widetilde{U}_i(X_T^i,Y_T^i)\right].
  \end{equation*}
Given the strict convexity of $e^{-x}$ and the linearity of the wealth process $X_t^i$ with respect to the dollar amount of wealth process $\pi_i$, one can deduce that $\E^{t,x,y} \left[\widetilde{U}_i(X_T^i,Y_T^i)\right]$ is strictly concave with respect to $\pi_i$. Therefore, the constant optimal strategy $\pi_i^*$, given others using fixed $\pi_k$, $k \neq i$, is unique. By dynamic programming (\cite[Sections 3.3--3.4]{pham2009continuous}), it satisfies the following HJB equation:
  \begin{equation} \label{eq.HJB_exp}
    \left\{
    \begin{aligned}
        & \frac{\partial v^i}{\partial t}(t,x,y) + \sup_{\pi_i} \mathcal{L}^i v^i(t,x,y) = 0, \ t \in [0, T), \\
        & v^i(T, x, y) = \widetilde{U}_i(x,y),
    \end{aligned}
    \right.
  \end{equation}
  with $\mathcal{L}^i$ being the generator of $(X_t^i,Y_t^i)$ defined in \eqref{eq.dXtdYt_exp}:
  \begin{equation}
    \begin{aligned}
      \mathcal{L}^i  v^i& (t,x,y) \\
      = & \left[\pi_i\mu_iv_x^i + \widehat{\pi\mu} v_y^i + \frac{1}{2}(\nu_i^2+\sigma_i^2)\pi_i^2v_{xx}^i + \frac{1}{2}\left(\widehat{\pi\sigma}^2+\frac{1}{n}\widehat{\pi^2\nu^2}\right)v_{yy}^i + \pi_i\sigma_i\widehat{\pi\sigma}v_{xy}^i\right]_{(t,x,y)} \\ 
      & + \lambda_i\left(v^i(t,x+\pi_i\alpha_i,y)-v^i(t,x,y)-\pi_i\alpha_i v^i_x(t,x,y)\right) \\ 
      & + \sum_{k\neq i} \lambda_k\left(v^i(t,x,y+\frac{\pi_k\alpha_k}{n})-v^i(t,x,y)-\frac{\pi_k\alpha_k}{n}v^i_y(t,x,y)\right) \\ 
      & + \lambda_0\left(v^i(t,x+\pi_i\beta_i,y+\widehat{\pi\beta})-v^i(t,x,y)-\pi_i\beta_i v^i_x(t,x,y)-\widehat{\pi\beta}v^i_y(t,x,y)\right),
    \end{aligned}
  \end{equation}
  where
    $\widehat{\pi^2\nu^2} := \frac{1}{n} \sum_{k\neq i}\pi_k^2\nu_k^2$.

To solve \eqref{eq.HJB_exp}, one makes the ansatz $v^i(t,x,y) = f(t)\cdot\widetilde{U}_i(x,y)$ and it gives  
\begin{equation}
    f'(t)+kf(t)=0, \quad f(T)=1, \quad \text{with}
  \end{equation}
\begin{equation}
    \begin{aligned}
      k = & -\frac{1}{\delta_i}(1-\frac{\theta_i}{n})\pi_i^*\mu_i+\widehat{\pi\mu}\frac{\theta_i}{\delta_i} + \frac{1}{2}(\nu_i^2+\sigma_i^2)(\pi_i^*)^2\left(\frac{1-\frac{\theta_i}{n}}{\delta_i}\right)^2 + \frac{1}{2}\left(\widehat{\pi\sigma}^2+\frac{1}{n}\widehat{\pi^2\nu^2}\right)\frac{\theta_i^2}{\delta_i^2} \\
      & -\pi_i^*\sigma_i\widehat{\pi\sigma}\frac{1}{\delta_i^2}(1-\frac{\theta_i}{n})\theta_i + \lambda_i\left(e^{-\frac{1}{\delta_i}(1-\frac{\theta_i}{n})\pi_i^*\alpha_i}-1+\pi_i^*\alpha_i\frac{1}{\delta}(1-\frac{\theta}{n})\right) \\ 
      & + \sum_{k\neq i}\lambda_k\left(e^{\frac{\theta_i}{\delta_i}\frac{\pi_k\alpha_k}{n}}-1-\frac{\pi_k\alpha_k}{n}\frac{\theta_i}{\delta_i}\right) \\ 
      & + \lambda_0\left(e^{-\frac{1}{\delta_i}(1-\frac{\theta_i}{n})\pi_i^*\beta_i}\cdot e^{\frac{\theta_i}{\delta_i}\widehat{\pi\beta}}-1+\pi^*_i\beta_i\frac{1}{\delta_i}(1-\frac{\theta_i}{n})-\widehat{\pi\beta}\frac{\theta_i}{\delta_i}\right).
    \end{aligned}
  \end{equation}
Thus $f(t)=e^{k(T-t)}$. Plugging it back to \eqref{eq.HJB_exp} and using the first-order condition yield
  \begin{equation} \label{eq.system_exp}
    \begin{aligned}
      \mu_i + \frac{1}{\delta_i}\sigma_i\theta_i\widehat{\pi\sigma} & - \frac{1}{\delta_i}(1-\frac{\theta_i}{n})(\nu_i^2+\sigma_i^2)\pi^*_i \\ 
      & + \lambda_0\beta_i \left( e^{-\frac{1}{\delta_i}\left((1-\frac{\theta_i}{n})\pi_i^*\beta_i-\theta_i\widehat{\pi\beta}\right)} -1 \right) + \lambda_i\alpha_i \left( e^{-\frac{1}{\delta_i}(1-\frac{\theta_i}{n})\pi_i^*\alpha_i} -1\right) = 0.
    \end{aligned}
  \end{equation}
The $\pi^\ast_i$ obtained is indeed the unique maximizer by checking the second-order condition in \eqref{eq.HJB_exp}:
  \begin{multline*}
    \frac{\dif^2}{\dif \pi_i^2} \left[\mathcal{L}^i v^i(t,x,y)\right] = \\ 
     \left((\nu_i^2+\sigma_i^2)+\lambda_0\beta_i^2 e^{\frac{1}{\delta_i}(1-\frac{\theta_i}{n})\pi_i\beta_i} e^{\frac{\theta_i}{\delta_i}\widehat{\pi\beta}} + \lambda_i\alpha_i^2 e^{\frac{1}{\delta_i}(1-\frac{\theta_i}{n})\pi_i\alpha_i}\right)\left(\frac{1-\frac{\theta_i}{n}}{\delta_i}\right)^2 v^i(t,x,y) < 0,
  \end{multline*}
  due to $v^i(t,x,y)<0$. 

  It is clear from \eqref{eq.system_exp} that $\pi_i^*$ is a constant. We have now found a constant solution to the 
  HJB equation~\eqref{eq.HJB_exp} for agent $i$. Additionally, according to the Verification Theorem (\cite[Section 3.5]{pham2009continuous}), this solution represents the $i$-th agent's value function and provides a constant optimal strategy. Given the strict concavity of the optimization problem, we can conclude that there is no other constant solution to the $i$-th agent's HJB equation. In other words, given the constant strategies $\pi_k$ of other agents, where $k \neq i$,  $\pi_i^\ast$ is unique and satisfies \eqref{eq.system_exp}.

  Symmetrically, all agents $j \in \mathcal{I}$ follow an analogous to \eqref{eq.system_exp} strategy, meaning that one can replace  $\{\pi_k\}_{k\neq i}$ with $\{\pi_k^*\}_{k\neq i}$ in \eqref{eq.system_exp}. This leads us to the condition \eqref{eq.thm_exp}.  
\end{proof}

\begin{remark}
When $\alpha_i=\beta_i=0$, the stock prices no longer exhibit jumps, and in this case, the constant Nash equilibrium has an explicit form; see \cite{lacker2019mean} for details. As discussed in \cite{lacker2019mean}, it remains unclear theoretically whether this multi-agent game has non-constant Nash equilibria, i.e., strategies depending on the time and/or state variables. However, in numerical experiments presented in Section~\ref{sec.numerical.games}, we search among strategies parameterized by neural networks taking inputs of the time $t$ and the state variables $(x_1,\cdots,x_n)$, and find that they ultimately converge to the constant Nash equilibrium given by Theorem \ref{thm.exp}.
\end{remark}

\begin{corollary}
If the parameters of the problem are such that, for any $(\pi_1, \cdots, \pi_n) \in \mathbb{R}^n$, the Jacobian matrix of the left-hand side of \eqref{eq.thm_exp} with respect to the strategies is invertible, then \eqref{eq.thm_exp} has a unique solution in $\mathbb{R}^n$, and the constant Nash equilibrium is unique in $\mathbb{R}^n$.
\end{corollary}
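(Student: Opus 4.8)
The plan is to read the left-hand side of \eqref{eq.thm_exp} as a single map and apply a global inverse function theorem. Define $F = (F_1,\dots,F_n) : \mathbb{R}^n \to \mathbb{R}^n$ by letting $F_i(\pi_1,\dots,\pi_n)$ be the left-hand side of the $i$-th equation in \eqref{eq.thm_exp}, so that the solutions of \eqref{eq.thm_exp} are exactly the zeros of $F$. Since each $F_i$ is a finite sum of affine terms in $\pi$ and of exponentials of affine functions of $\pi$, the map $F$ is $C^\infty$, and the hypothesis states precisely that the Jacobian $DF(\pi)$ is nonsingular at every $\pi \in \mathbb{R}^n$. By the inverse function theorem, $F$ is therefore a local diffeomorphism everywhere.

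The key point is that a nonsingular Jacobian alone does not force global injectivity once $n \ge 2$ (the map $(x,y)\mapsto(e^x\cos y, e^x\sin y)$ is the standard caution), so to conclude that $F$ has a single zero I would upgrade the local statement to a global one. A $C^1$ map that is a local diffeomorphism and is proper is automatically a covering map; since the target $\mathbb{R}^n$ is simply connected and the domain is connected, this covering has a single sheet, so $F$ is a global diffeomorphism of $\mathbb{R}^n$ onto itself. In particular $F$ is a bijection, whence $F(\pi)=0$ has exactly one solution $\pi^*\in\mathbb{R}^n$, giving both existence and uniqueness for \eqref{eq.thm_exp}.

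The main obstacle is establishing properness, i.e. $\|F(\pi)\|\to\infty$ as $\|\pi\|\to\infty$; this is the genuine content beyond the Jacobian hypothesis. I would argue from the structure of $F_i$: the purely linear contribution $-\tfrac{1}{\delta_i}(1-\tfrac{\theta_i}{n})(\nu_i^2+\sigma_i^2)\pi_i^*$ is nondegenerate because $\nu_i^2+\sigma_i^2>0$ and $1-\tfrac{\theta_i}{n}>0$ for $\theta_i\in(0,1)$, while the coupling enters only through the averages $\widehat{\pi^*\sigma}$ and $\widehat{\pi^*\beta}$ and through exponentials that are bounded below by $0$. The delicate case is when some $\alpha_i$ or $\beta_i$ is nonzero, since the exponential terms can dominate the linear one along certain rays; here I would track which coordinate directions drive $\|\pi\|\to\infty$ and show that along every such direction at least one component $F_i$ is unbounded. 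If a clean unconditional proof of properness is unavailable, the customary companion to the nonvanishing-Jacobian condition is to assume coercivity (properness) of $F$ outright, and I would flag this as the precise additional ingredient the statement relies on.

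Finally, the passage from ``\eqref{eq.thm_exp} has a unique solution'' to ``the constant Nash equilibrium is unique'' is immediate from Theorem~\ref{thm.exp}: every constant Nash equilibrium must satisfy \eqref{eq.thm_exp}, so there is at most one; conversely, the unique zero $\pi^*$ realizes each agent's best-response optimality condition \eqref{eq.system_exp}, which by the Verification Theorem identifies the corresponding constant strategy as optimal, so $\pi^*$ is indeed a constant Nash equilibrium.
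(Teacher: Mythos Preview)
Your analysis is considerably more careful than the paper's own proof, which consists of a single sentence: ``The existence of a solution follows directly from the multivariate implicit function theorem.'' You have correctly recognized that this does not suffice: the (local) implicit or inverse function theorem does not yield global existence or uniqueness of a zero of $F$, and the nonvanishing-Jacobian hypothesis alone does not force $F$ to be a global diffeomorphism, as your $(x,y)\mapsto(e^x\cos y,e^x\sin y)$ example shows. Your proposed route via Hadamard's global inverse function theorem, together with the properness check, is the standard way to make such a statement rigorous, and your flag that properness is the genuine missing ingredient is exactly right.

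In short: the paper and you pursue the same underlying idea (global invertibility from the Jacobian condition), but the paper's one-line invocation does not close the argument, and you have identified the gap. Your honest acknowledgment that properness may need to be assumed separately, rather than derived from the stated hypotheses, is the correct diagnosis; the corollary as written in the paper is under-argued on precisely this point.
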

\begin{proof}
We note that finding the constant Nash equilibrium is equivalent to solving the nonlinear system of equations \eqref{eq.thm_exp}. The existence of a solution follows directly from the multivariate implicit function theorem.
\end{proof}

We next provide an explicit sufficient condition for the existence and uniqueness of the solution to system \eqref{eq.thm_exp} in a bounded region. The proof is provided in Appendix \ref{sec.appendix_proof_exp}.

\begin{corollary} \label{cor.unique_exp}
Assuming $\mu_i>0$, $\nu_i\geq0$, $\sigma_i\geq0$, $\nu_i^2+\sigma_i^2>0$, $\delta_i>0$, $\theta_i \in (0,1)$, and the following two conditions hold,
  \begin{gather}                \overline{\nu}^2+\overline{\sigma}^2+\lambda_0\beta_0^2 e^{\frac{2C\beta_0}{\underline{\delta}}} + \overline{\lambda}\alpha_0^2 e^{\frac{C\alpha_0}{\underline{\delta}}} < \underline{\delta}, \label{eq.unique_exp_cond1} \\ 
\underline{\nu}^2-\overline{\sigma}(\overline{\sigma}-\underline{\sigma})-\lambda_0\beta_0^2 e^{\frac{2C\beta_0}{\underline{\delta}}} > 0, \label{eq.unique_exp_cond2}
\end{gather}
where $\underline{\delta}=\min_{1\leq i\leq n} \delta_i$, $\overline{\lambda}=\max_{1\leq i\leq n} \lambda_i$, $\alpha_0=\max_{1\leq i\leq n}|\alpha_i|$, $\beta_0=\max_{1\leq i\leq n}|\beta_i|$, $\overline{\nu}=\max_{1\leq i\leq n}\nu_i$, $\underline{\nu}=\min_{1\leq i\leq n}\nu_i$, $\overline{\sigma}=\max_{1\leq i\leq n}\sigma_i$, $\underline{\sigma}=\min_{1\leq i\leq n}\sigma_i$,
  then the nonlinear system \eqref{eq.thm_exp} has a unique solution on $\{(\pi_1,\cdots,\pi_n): |\pi_i|\leq C, \forall i=1,2,\cdots,n\}$.
\end{corollary}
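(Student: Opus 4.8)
The plan is to recast the problem as a fixed-point equation and invoke the Banach contraction principle on the closed box $K := \{\pi \in \R^n : |\pi_i| \le C,\ \forall i\}$ with the supremum norm $\|\cdot\|_\infty$. Writing $F_i(\pi)$ for the left-hand side of the $i$-th equation in \eqref{eq.thm_exp}, solutions of the system are exactly the fixed points of the map $T(\pi) := \pi + F(\pi)$. Since $K$ is a complete metric space, it suffices to show that $T$ maps $K$ into itself and is a contraction there, which delivers existence and uniqueness simultaneously. The reason for choosing the simple update $T = \mathrm{id} + F$, rather than solving each equation for $\pi_i$, is that the diagonal entries $\partial_{\pi_i} F_i$ are negative and of order $1/\delta_i$, so condition \eqref{eq.unique_exp_cond1} is precisely what forces $\partial_{\pi_i} T_i = 1 + \partial_{\pi_i} F_i$ into the interval $(0,1)$.

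First I would compute the Jacobian $DT = I + DF$ on $K$. Direct differentiation gives $\partial_{\pi_i} F_i = -\tfrac{1}{\delta_i}(1-\tfrac{\theta_i}{n})\big[(\nu_i^2+\sigma_i^2) + \lambda_0\beta_i^2 e^{A_i} + \lambda_i\alpha_i^2 e^{B_i}\big]$ and, for $j \ne i$, $\partial_{\pi_j} F_i = \tfrac{\theta_i}{\delta_i n}\big(\sigma_i\sigma_j + \lambda_0\beta_i\beta_j e^{A_i}\big)$, where $A_i,B_i$ denote the two exponents appearing in \eqref{eq.thm_exp}. On $K$ one bounds these exponentials uniformly: using $|\pi_i| \le C$, $1-\tfrac{\theta_i}{n} < 1$, and $\delta_i \ge \underline{\delta}$, the $\beta$-exponential is at most $e^{2C\beta_0/\underline{\delta}}$ and the $\alpha$-exponential at most $e^{C\alpha_0/\underline{\delta}}$, which is exactly where the two exponential factors in \eqref{eq.unique_exp_cond1}--\eqref{eq.unique_exp_cond2} originate.

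Next I would turn the two hypotheses into the two ingredients of a strict row-sum bound for $DT$. Condition \eqref{eq.unique_exp_cond1} bounds the bracket in $\partial_{\pi_i} F_i$ by $\underline{\delta} \le \delta_i$, whence $|\partial_{\pi_i} F_i| < 1$ and therefore $\partial_{\pi_i} T_i = 1 - |\partial_{\pi_i} F_i| \in (0,1)$. Condition \eqref{eq.unique_exp_cond2} yields strict diagonal dominance $\sum_{j\ne i} |\partial_{\pi_j} F_i| < |\partial_{\pi_i} F_i|$: after dropping the nonnegative jump terms on the diagonal, discarding the combinatorial factor $\tfrac{\theta_i(n-1)}{n-\theta_i} < 1$ (valid since $\theta_i \in (0,1)$), bounding $|\beta_i|\beta_0 \le \beta_0^2$, and using the elementary inequality $\sigma_i(\overline{\sigma} - \sigma_i) \le \overline{\sigma}(\overline{\sigma} - \underline{\sigma})$ on $[\underline{\sigma},\overline{\sigma}]$, the required estimate reduces precisely to \eqref{eq.unique_exp_cond2}. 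Combining the two facts, the $i$-th absolute row sum of $DT$ equals $\big(1 - |\partial_{\pi_i} F_i|\big) + \sum_{j \ne i} |\partial_{\pi_j} F_i| < 1$, so $\|DT\|_\infty \le q$ for some $q < 1$ uniformly on $K$, and $T$ is a contraction there by the mean value inequality on the convex set $K$.

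Finally I would verify the self-mapping $T(K) \subseteq K$ and conclude. Since $T(0) = F(0) = (\mu_1, \dots, \mu_n)$, the estimate $\|T(\pi)\|_\infty \le \|T(\pi) - T(0)\|_\infty + \|T(0)\|_\infty \le qC + \max_i \mu_i$ shows that $T$ maps $K$ into itself as soon as $\max_i \mu_i \le (1-q)C$; under this (or an analogous) calibration of the radius, Banach's theorem gives a unique fixed point in $K$, i.e.\ a unique solution of \eqref{eq.thm_exp}. I expect the main obstacle to be the uniform contraction estimate: one must show the two inequalities \eqref{eq.unique_exp_cond1}--\eqref{eq.unique_exp_cond2} really do combine to push every row sum strictly below $1$ for all $\pi \in K$, which hinges on the sign bookkeeping that converts $|1 + \partial_{\pi_i} F_i|$ into $1 - |\partial_{\pi_i} F_i|$ and on the sharp quadratic bound for $\sigma_i(\overline{\sigma} - \sigma_i)$ that makes \eqref{eq.unique_exp_cond2} exactly sufficient. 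A secondary subtlety is the self-mapping step, which is the reason the radius $C$ cannot be taken arbitrarily large or small and must be compatible with the drifts $\mu_i$.
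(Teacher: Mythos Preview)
Your approach is essentially identical to the paper's: rewrite the system as a fixed point of $T=\mathrm{id}+F$, bound the two exponentials on the box $K$, use condition \eqref{eq.unique_exp_cond1} to place the diagonal entries of $DT$ in $(0,1)$ and condition \eqref{eq.unique_exp_cond2} to control the off-diagonal row sums, conclude $\|DT\|_\infty<1$, and invoke Banach. The paper's proof in fact does not verify the self-mapping $T(K)\subseteq K$ that you flag as a secondary subtlety---it simply iterates $\pi^{k+1}=T(\pi^k)$ and asserts the sequence is Cauchy---so your extra care there is well placed, though the additional drift condition $\max_i\mu_i\le(1-q)C$ you suggest is not among the Corollary's stated hypotheses.
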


\subsection{CRRA: Power utility} \label{sec.power}
This section discusses a multi-agent game similar to the one in subsection \ref{sec.exp}, but with CRRA risk preference.  
Here, denote by $\pi_t^i$ the proportion of wealth invested in the $i$-th stock. Consequently, her wealth process satisfies
\begin{equation} \label{eq.dXt_power}
  \dif X_t^i = \frac{\pi_t^i X_{t-}^i }{S_{t-}^i}\dif S_t^i = \pi_t^i X_{t-}^i(\mu_i\dif t+\nu_i\dif W_t^i+\sigma_i\dif B_t+\alpha_i\dif M_t^i+\beta_i\dif M_t^0),\ X_0^i=x_0^i, \ i \in \mathcal{I}.
\end{equation}
In the power utility case, agent $i$'s utility function $U_i$ takes into account both her position and the geometric average of her peers: 
\begin{equation} \label{eq.utility_power}
  U_i(x_1,x_2,\cdots,x_n) = \frac{1}{p_i}\left(\frac{x_i}{\left(\prod_{k=1}^n x_k\right)^{\frac{\theta_i}{n}}}\right)^{p_i},
\end{equation}
where $p_i\in (0,1)$ is the relative risk tolerance and $\theta_i \in (0,1)$ models the agent’s interaction towards others. The definitions of $J_i$ and  Nash equilibrium stay the same as in equation \eqref{eq.payoff} and Definition \ref{def.Nash}. The following theorem is the main result of this subsection, providing a constant Nash equilibrium under the power utility.
\begin{theorem} \label{thm.power}
  Assume $\mu_i>0$, $\nu_i\geq0$, $\sigma_i\geq0$, $\nu_i^2+\sigma_i^2>0$, $p_i \in (0,1)$ and $\theta_i \in (0, 1)$, any constant Nash equilibrium $(\pi_1^*,\cdots,\pi_n^*)$ satisfies the coupled system:
  \begin{equation} \label{eq.thm_power}
    \begin{aligned}
      \mu_i & + (\nu_i^2+\sigma_i^2)\left(p_i(1-\frac{\theta_i}{n})-1\right)\pi_i^*-p_i\theta_i\sigma_i\widehat{\pi^*\sigma} - \lambda_i\alpha_i-\lambda_0\beta_i \\ 
      & + \lambda_i\alpha_i(1+\pi_i^*\alpha_i)^{p_i(1-\frac{\theta_i}{n})-1} + \lambda_0\beta_i\frac{(1+\pi_i^*\beta_i)^{p_i(1-\frac{\theta_i}{n})-1}}{\widetilde{1+\pi^*\beta}^{p_i\theta_i}} = 0, \quad \forall i \in \mathcal{I},
    \end{aligned}
  \end{equation} 
where $\widehat{\pi^*\sigma} := \frac{1}{n} \sum_{k\neq i} \pi^*_k \sigma_k, \quad \widetilde{1+\pi^*\beta} := \left(\prod_{k\neq i}(1+\pi_k^*\beta_k)\right)^{\frac{1}{n}}$.
If system \eqref{eq.thm_power} has a unique solution, the constant Nash equilibrium is unique.
\end{theorem}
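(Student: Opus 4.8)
The plan is to mirror the argument used for Theorem~\ref{thm.exp}, adapting each step to the multiplicative (geometric) structure dictated by the power utility \eqref{eq.utility_power}. Fix agent $i$ and suppose all others $k \neq i$ employ constant proportions $\pi_k$. Since \eqref{eq.utility_power} depends on the peers only through the product $\prod_{k \neq i} x_k$, the natural sufficient statistic is agent $i$'s wealth $X_t^i$ together with the geometric aggregate $Y_t^i := \bigl(\prod_{k \neq i} X_t^k\bigr)^{1/n}$. Writing $a_i := p_i(1 - \tfrac{\theta_i}{n})$ and $b_i := p_i \theta_i$, the utility factorizes as $\widetilde{U}_i(x,y) = \tfrac{1}{p_i} x^{a_i} y^{-b_i}$, a product of powers; this homogeneity is the feature we will exploit, in place of the additive separability used in the exponential case.

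First I would derive the dynamics of $(X_t^i, Y_t^i)$. Because each $X_t^k$ is an exponential-type process, so is $Y_t^i$: applying It\^o's formula to $\log Y_t^i = \tfrac1n \sum_{k\neq i}\log X_t^k$ and re-exponentiating, its $B_t$-diffusion coefficient is the average $\widehat{\pi\sigma} = \tfrac1n\sum_{k\neq i}\pi_k\sigma_k$, while the jumps act multiplicatively: an idiosyncratic shock $\dif N_t^k$ scales $Y_t^i$ by $(1+\pi_k\alpha_k)^{1/n}$, and the common shock $\dif N_t^0$ scales $Y_t^i$ by $\widetilde{1+\pi\beta} := \bigl(\prod_{k\neq i}(1+\pi_k\beta_k)\bigr)^{1/n}$ while simultaneously scaling $X_t^i$ by $1+\pi_i\beta_i$. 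I would then write the HJB equation as in \eqref{eq.HJB_exp}, with $\mathcal{L}^i$ the generator of $(X_t^i,Y_t^i)$, and make the separable ansatz $v^i(t,x,y) = f(t)\,\widetilde{U}_i(x,y)$.

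The ansatz closes precisely because of homogeneity: since $x v^i_x = a_i v^i$, $y v^i_y = -b_i v^i$, and each jump evaluation satisfies $v^i(t,\rho x,\varrho y) = \rho^{a_i}\varrho^{-b_i}\,v^i(t,x,y)$, every term of $\mathcal{L}^i v^i$ equals $v^i$ times a function of $\pi_i$ and the aggregates alone, reducing the HJB to a linear ODE $f'(t) + k f(t)=0$ with $f(T)=1$. The first-order condition in $\pi_i$ then reproduces exactly \eqref{eq.thm_power}; in particular, differentiating the $\lambda_0$ jump contribution $(1+\pi_i\beta_i)^{a_i}\,\widetilde{1+\pi\beta}^{-b_i}$ and dividing through by the common factor $a_i>0$ produces the mixed coefficient $(1+\pi_i^*\beta_i)^{a_i-1}/\widetilde{1+\pi^*\beta}^{\,b_i}$. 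To confirm this critical point is the maximizer I would verify the second-order condition: because $a_i \in (0,1)$ gives $a_i(a_i-1)<0$, the contributions $(\nu_i^2+\sigma_i^2)\,a_i(a_i-1)$, $\lambda_i\alpha_i^2\,a_i(a_i-1)(1+\pi_i\alpha_i)^{a_i-2}$, and the analogous $\lambda_0$ term are all strictly negative, so the Hamiltonian is strictly concave in $\pi_i$; since $v^i>0$ its maximizer is unique and, being determined by a $(t,x,y)$-independent equation, constant. The Verification Theorem (\cite[Section 3.5]{pham2009continuous}) then identifies $f(t)\widetilde{U}_i$ as agent $i$'s value function and the first-order-condition solution as the unique constant best response; imposing the symmetry $\pi_k \mapsto \pi_k^*$ across all agents, exactly as in the exponential proof, promotes this to the coupled system \eqref{eq.thm_power}, with uniqueness of the equilibrium following whenever \eqref{eq.thm_power} has a unique solution.

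I expect the main obstacle to be the jump part of the dynamics of the geometric aggregate $Y_t^i$, specifically correctly bookkeeping that the common Poisson shock $N_t^0$ forces simultaneous multiplicative jumps in both state variables — $X_t^i$ by $1+\pi_i\beta_i$ and $Y_t^i$ by $\widetilde{1+\pi\beta}$ — which couples them inside the single $\lambda_0$ integral term of $\mathcal{L}^i$ and is what generates the mixed factor in \eqref{eq.thm_power}; getting the exponents $a_i-1$ and $b_i$ right requires care. A secondary point is admissibility: the fractional powers demand $1+\pi_i\alpha_i>0$ and $1+\pi_i\beta_i>0$ so that wealth stays positive, which must be assumed for the ansatz and the second-order computation to make sense.
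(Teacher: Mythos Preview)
Your proposal is correct and follows essentially the same route as the paper: the paper also introduces the geometric aggregate $Y_t^i=(\prod_{k\neq i}X_t^k)^{1/n}$, derives its dynamics by applying It\^o to $\log Y_t^i$ and re-exponentiating, makes the multiplicative ansatz $v^i(t,x,y)=f(t)\widetilde U_i(x,y)$, reduces to $f'+kf=0$, obtains \eqref{eq.thm_power} from the first-order condition, checks the second-order condition, and closes with verification and the symmetry argument. Your remarks on the joint $N_t^0$-jump in $(X_t^i,Y_t^i)$ and on the positivity constraints $1+\pi_i\alpha_i>0$, $1+\pi_i\beta_i>0$ match the paper's handling as well.
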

\begin{proof}
As the utility depends on the geometric average, let us consider the wealth process of the $i$-th agent $X_t^i$ and the product of the remaining agents $Y_t^i:=(\prod_{k\neq i}X_t^k)^{\frac{1}{n}}$, assuming that all other agents ($k\neq i$) use constant strategies $\pi_k$. Applying It\^{o} formula to $\log X_t^i$ gives 
  \begin{equation*}
    \begin{aligned}
      \dif\left(\log X_t^i\right) = & \left(\pi_i\mu_i-\frac{1}{2}\pi_i^2(\nu_i^2+\sigma_i^2)+\lambda_i\left(\log(1+\pi_i\alpha_i)-\pi_i\alpha_i\right)+\lambda_0\left(\log(1+\pi_i\beta_i)-\pi_i\beta_i\right)\right)\dif t \\
      & + \pi_i\nu_i\dif W_t^i + \pi_i\sigma_i\dif B_t + \log(1+\pi_i\alpha_i)\dif M_t^i + \log(1+\pi_i\beta_i)\dif M_t^0.
    \end{aligned}
  \end{equation*}
Summing over $k \neq i$ brings
  \begin{equation*}
    \begin{aligned}
      \dif\left(\log Y_t^i\right) = \ & \frac{1}{n}\sum_{k\neq i}\dif\left(\log X_t^i\right) = \left(\widehat{\pi\mu}-\frac{1}{2}(\widehat{\pi^2\nu^2}+\widehat{\pi^2\sigma^2})+\frac{1}{n}\sum_{k\neq i}\lambda_k(\log(1+\pi_k\alpha_k)-\pi_k\alpha_k)\right. \\ 
      & +\left.\lambda_0\frac{1}{n}\sum_{k\neq i}\left(\log(1+\pi_k\beta_k)-\pi_k\beta_k\right)\right)\dif t + \frac{1}{n}\sum_{k\neq i}\pi_k\nu_k\dif W_t^k + \widehat{\pi\sigma}\dif B_t \\ 
      & + \frac{1}{n}\sum_{k\neq i}\log(1+\pi_k\alpha_k)\dif M_t^k + \log\left(\widetilde{1+\pi\beta}\right) \dif M_t^0,
    \end{aligned}
  \end{equation*}
  where $\widehat{\pi\mu} := \frac{1}{n}\sum_{k\neq i}\pi_k\mu_k$,  $\widehat{\pi\sigma} := \frac{1}{n}\sum_{k\neq i} \pi_k\sigma_k$, $\widehat{\pi^2\sigma^2} := \frac{1}{n}\sum_{k\neq i} \pi_k^2\sigma_k^2$, $\widehat{\pi^2\nu^2} := \frac{1}{n}\sum_{k\neq i} \pi_k^2\nu_k^2$,  $\widetilde{1+\pi\beta} := \left(\prod_{k\neq i}(1+\pi_k\beta_k)\right)^{\frac{1}{n}}$.
  Another application of It\^{o} formula produces
\begin{multline} \label{eq.dYt_power}
        \frac{\dif Y_t^i}{Y_{t-}^i} = \eta \dif t + \frac{1}{n}\sum_{k\neq i}\pi_k\nu_k\dif W_t^k + \widehat{\pi\sigma}\dif B_t  \\+ \sum_{k\neq i}\left((1+\pi_k\alpha_k)^{\frac{1}{n}}-1\right)\dif M_t^k + \left(\widetilde{1+\pi\beta}-1\right)\dif M_t^0,
\end{multline}
  where $\eta = \widehat{\pi\mu}-\frac{1}{2}(\widehat{\pi^2\nu^2}+\widehat{\pi^2\sigma^2})+\frac{1}{2}\left(\frac{1}{n}\widehat{\pi^2\nu^2}+\widehat{\pi\sigma}^2\right) +\sum_{k\neq i}\lambda_k\left((1+\pi_k\alpha_k)^{\frac{1}{n}}-1-\frac{\pi_k\alpha_k}{n}\right) + \lambda_0\left(\widetilde{1+\pi\beta}-1-\widehat{\pi\beta}\right)$.
  On the other hand, the utility function $U_i$ can be expressed in terms of $(X_T^i, Y_T^i)$
  \begin{equation}
    U_i(\overrightarrow{X_T}) = \frac{1}{p_i}\Bigg(\frac{X_T^i}{\big(\prod_{k=1}^n X_T^k\big)^{\frac{\theta_i}{n}}}\Bigg)^{p_i} = \frac{1}{p_i}\Bigg(\frac{(X_T^i)^{1-\frac{\theta_i}{n}}}{\big(\prod_{k\neq i} X_T^k\big)^{\frac{\theta_i}{n}}}\Bigg)^{p_i} = \widetilde{U}_i(X_T^i,Y_T^i),
  \end{equation} 
  where $\widetilde{U}_i(x,y) = \frac{1}{p_i}\left(\frac{x^{1-\frac{\theta_i}{n}}}{y^{\theta_i}}\right)^{p_i}.$
  
  In view of agent $i$’s portfolio optimization problem, her value function $v^i$ defined as
  \begin{equation} \label{eq.value_fun_power}
    v^i(t,x,y) = \sup_{\pi_i} \E^{t,x,y} \left[\widetilde{U}_i(X_T^i,Y_T^i)\right],
  \end{equation}
  satisfies the following HJB equation:
  \begin{equation} \label{eq.HJB_power}
    \left\{
    \begin{aligned}
        & \frac{\partial v^i}{\partial t}(t,x,y) + \sup_{\pi_i} \mathcal{L}^i v^i(t,x,y) = 0, \ t \in [0, T), \\
        & v^i(T, x, y) = \widetilde{U}_i(x,y).
    \end{aligned}
    \right.
  \end{equation}
 Here the generator $\mathcal{L}^i$ of process $(X_t^i,Y_t^i)$ given in \eqref{eq.dXt_power} and \eqref{eq.dYt_power} reads
  \begin{equation} \label{eq.generator_power}
{\small    \begin{aligned}
      & \mathcal{L}^i v^i(t,x,y) = \\ 
      & \left[\pi_i\mu_ixv_x^i + \eta yv_y^i + \frac{1}{2}(\nu_i^2+\sigma_i^2)\pi_i^2x^2v_{xx}^i + \frac{1}{2}\left(\widehat{\pi\sigma}^2+\frac{1}{n}\widehat{\pi^2\nu^2}\right)y^2v_{yy}^i + \pi_i\sigma_i\widehat{\pi\sigma}xyv_{xy}^i\right]_{(t,x,y)} \\ 
      & + \lambda_i\left(v^i(t,x+\pi_i\alpha_ix,y)-v^i(t,x,y)-\pi_i\alpha_i xv^i_x(t,x,y)\right) \\ 
      & + \sum_{k\neq i} \lambda_k\left[v^i(t,x,(1+\pi_k\alpha_k)^{\frac{1}{n}}y)-v^i(t,x,y)-\left((1+\pi_k\alpha_k)^{\frac{1}{n}}-1\right)yv^i_y(t,x,y)\right] \\ 
      & + \lambda_0\left[v^i(t,x+\pi_i\beta_ix,\widetilde{1+\pi\beta}y)-v^i(t,x,y)-\pi_i\beta_i xv^i_x(t,x,y)-\left(\widetilde{1+\pi\beta}-1\right)yv^i_y(t,x,y)\right].
    \end{aligned}}
  \end{equation}
From the fact that $\E^{t,x,y} \left[\widetilde{U}_i(X_T^i,Y_T^i)\right]$ is strictly concave with respect to the strategy process (absolute dollar amount being invested into $S^i$), we can deduce that the optimal strategy $\pi_i^*$ in equation \eqref{eq.value_fun_power} is unique. Making the same ansatz $v^i(t,x,y) = f(t)\cdot\widetilde{U}_i(x,y)$, we deduce 
  \begin{equation}
    f'(t)+kf(t)=0, \quad f(T)=1, \quad \text{with}
  \end{equation}
  \begin{equation}
    \begin{aligned}
      k = &\ p_i(1-\frac{\theta_i}{n})\pi_i^*\mu_i - p_i\theta_i\eta^* + \frac{1}{2}(\nu_i^2+\sigma_i^2)(\pi_i^*)^2p_i(1-\frac{\theta_i}{n})\left(p_i(1-\frac{\theta_i}{n})-1\right) \\  
      & + \frac{1}{2}\left(\widehat{\pi\sigma}^2+\frac{1}{n}\widehat{\pi^2\nu^2}\right)p_i\theta_i(p_i\theta_i+1)
      -\pi_i^*\sigma_i\widehat{\pi\sigma}p_i^2\theta_i(1-\frac{\theta_i}{n}) \\ 
      & + \lambda_i\left((1+\pi_i^*\alpha_i)^{p_i(1-\frac{\theta_i}{n})}-1-p_i(1-\frac{\theta_i}{n})\pi_i^*\alpha_i\right) \\
      & + \sum_{k\neq i}\lambda_k\left((1+\pi_k\alpha_k)^{-\frac{p_i\theta_i}{n}}-1+((1+\pi_k\alpha_k)^{\frac{1}{n}}-1)p_i\theta_i\right) \\ 
      & + \lambda_0\left(\frac{(1+\pi_i^*\beta_i)^{p_i(1-\frac{\theta_i}{n})}}{\widetilde{1+\pi\beta}^{p_i\theta_i}}-1-p_i(1-\frac{\theta_i}{n})\pi_i^*\beta_i+(\widetilde{1+\pi\beta}-1)p_i\theta_i\right).
    \end{aligned}
  \end{equation}
  Then $f$ is given by $f(t)=e^{k(T-t)}$, and the equilibrium strategy $\pi_t^\ast$ is obtained via the first-order condition in \eqref{eq.HJB_power}:
  \begin{equation} \label{eq.system_power}
    \begin{aligned}
      \mu_i & + (\nu_i^2+\sigma_i^2)\left(p_i(1-\frac{\theta_i}{n})-1\right)\pi_i^*-p_i\theta_i\sigma_i\widehat{\pi\sigma} \\ 
      & + \lambda_i\alpha_i\left( (1+\pi_i^*\alpha_i)^{p_i(1-\frac{\theta_i}{n})-1} -1\right) + \lambda_0\beta_i\left(\frac{(1+\pi_i^*\beta_i)^{p_i(1-\frac{\theta_i}{n})-1}}{\widetilde{1+\pi\beta}^{p_i\theta_i}} -1\right) = 0.
    \end{aligned}
  \end{equation}
  Its maximality is ensured by $v^i_{xx}(t,x,y)<0$ and the computation
  \begin{multline}\label{eq.second_derivate_power}
       \frac{\dif^2}{\dif \pi_i^2} \left[\mathcal{L}^i v^i(t,x,y)\right] = (\nu_i^2+\sigma_i^2)x^2v^i_{xx}(t,x,y) \\
       + \lambda_i\alpha_i^2x^2v^i_{xx}(t,(1+\pi_i\alpha_i)x,y) + \lambda_0\beta_i^2x^2v^i_{xx}(t,(1+\pi_i\beta_i)x,\widetilde{1+\pi\beta}y).
  \end{multline} 
Following the same reasoning as in the proof of Theorem~\ref{thm.exp}, we arrive at the condition~\eqref{eq.thm_power}.
Note that, compared to exponential utility, the additional conditions $\alpha_0<1$, $\beta_0<1$ and $0<C\leq1$ are introduced as the
expression \eqref{eq.thm_power} involves terms like $(1+\pi_i\alpha_i)^{p_i(1-\frac{\theta_i}{n})-1}$ and $(1+\pi_i\beta_i)^{p_i(1-\frac{\theta_i}{n})-1}$, and we aim to ensure that the base is always positive. 
\end{proof}

\begin{corollary}
If the parameters of the problem are such that, for any $(\pi_1, \dots, \pi_n) \in \mathbb{R}^n$, the Jacobian matrix of the left-hand side of \eqref{eq.thm_power} with respect to the strategies is invertible, then \eqref{eq.thm_power} admits a unique solution, and the constant Nash equilibrium is unique in $\mathbb{R}^n$.
\end{corollary}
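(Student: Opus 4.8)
The plan is to reformulate the statement as a zero-finding problem for the map $F:\mathcal{D}\to\mathbb{R}^n$ whose $i$-th component $F_i$ is the left-hand side of \eqref{eq.thm_power}, where $\mathcal{D}\subseteq\mathbb{R}^n$ is the set on which $F$ is well defined. As already noted in the proof of Theorem~\ref{thm.power}, a tuple $\pi^*=(\pi_1^*,\dots,\pi_n^*)$ is a constant Nash equilibrium precisely when $F(\pi^*)=0$, so the claim reduces to showing that $F$ has exactly one zero. The natural device is a global inverse function theorem: the hypothesis that $DF(\pi)$ is nonsingular at every $\pi$ makes $F$, by the ordinary inverse function theorem, a local $C^1$-diffeomorphism, and the task is to promote this to a global bijection.

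To make that passage I would invoke the Hadamard--L\'evy theorem in its covering-space form: a $C^1$ map with everywhere-nonsingular Jacobian that is moreover proper (preimages of compact sets are compact, equivalently $\|F(\pi)\|\to\infty$ as $\pi$ leaves every compact subset of $\mathcal{D}$) is a covering map onto its image; since $\mathcal{D}$ is convex, hence connected, and the target $\mathbb{R}^n$ is simply connected, the covering is single-sheeted and $F$ is a homeomorphism of $\mathcal{D}$ onto $\mathbb{R}^n$. In particular $0$ has a unique preimage, yielding both existence and uniqueness. The structural reason to expect properness is that, with $q_i:=p_i(1-\theta_i/n)\in(0,1)$, the only term of $F_i$ that is unbounded in $\pi_i$ is the linear one $(\nu_i^2+\sigma_i^2)(q_i-1)\pi_i$, whose coefficient is strictly negative, whereas the jump contributions carry the exponent $q_i-1<0$ and remain bounded as $|\pi_i|\to\infty$; this coercive linear backbone is what I would use to bound $\|F\|$ from below.

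I expect the growth estimate to be the main obstacle, and two points need care. First, in contrast to the exponential case of Theorem~\ref{thm.exp}, the power system contains fractional powers $(1+\pi_i\alpha_i)^{q_i-1}$, $(1+\pi_i\beta_i)^{q_i-1}$ and the factor $\widetilde{1+\pi^*\beta}^{\,p_i\theta_i}$, which require positive bases; hence $F$ genuinely lives on $\mathcal{D}=\{\pi: 1+\pi_i\alpha_i>0,\ 1+\pi_i\beta_i>0\ \forall i\}$, and properness must be checked relative to $\partial\mathcal{D}$ as well (it holds because, as any base tends to $0^+$, the associated negative-power term blows up, forcing $|F_i|\to\infty$). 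When $\alpha_i=\beta_i=0$ this degeneracy is absent and $\mathcal{D}=\mathbb{R}^n$, matching the stated setting. Second, invertibility of the Jacobian by itself does \emph{not} force global injectivity --- the standard obstruction is a map like $(x,y)\mapsto(e^x\cos y,e^x\sin y)$ --- so the properness verification is doing real work and is not a formal consequence of the hypothesis; if one were willing to strengthen the assumption to $DF$ being negative definite everywhere, injectivity would instead follow at once from strict monotonicity of $-F$, bypassing the topological argument.
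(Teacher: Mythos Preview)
The paper supplies no proof for this corollary; its exponential-utility analogue receives a single sentence (``The existence of a solution follows directly from the multivariate implicit function theorem''), which, as you correctly note with the $(e^x\cos y,e^x\sin y)$ example, does not actually establish global existence or uniqueness. Your plan via the Hadamard--L\'evy global inverse theorem is the right way to make the statement rigorous, and your attention to the restricted domain $\mathcal{D}$ forced by the fractional powers, together with the boundary blow-up, is a point the paper does not even raise.

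There is, however, one genuine gap in your properness sketch at infinity. You argue via the diagonal piece $(\nu_i^2+\sigma_i^2)(q_i-1)\pi_i$ of $F_i$, but $F_i$ also carries the cross term $-p_i\theta_i\sigma_i\widehat{\pi\sigma}$, which is linear in the \emph{other} coordinates. Along a ray where several coordinates grow together, coercivity of $F$ is governed by the full constant linear map $L$ with diagonal entries $(\nu_i^2+\sigma_i^2)(q_i-1)$ and off-diagonal entries $-p_i\theta_i\sigma_i\sigma_j/n$; this $L$ is precisely the limit of $DF(\pi)$ as $\pi\to\infty$ inside $\mathcal{D}$, since the jump contributions to the Jacobian decay there. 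The pointwise hypothesis that $DF(\pi)$ is nonsingular at every finite $\pi$ does \emph{not} force the limit $L$ to be nonsingular, so if $Lv=0$ for some $v\neq 0$ then $F(cv)$ remains bounded and Hadamard fails. In other words, the corollary as stated in the paper is itself incomplete; your own closing suggestion --- replace ``invertible'' by ``negative definite'' and use strict monotonicity --- is the cleanest fix, and is morally what the paper's subsequent Corollary~\ref{cor.unique_power} achieves by imposing explicit diagonal-dominance conditions that make $L$ (and the full $DF$) a contraction perturbation of the identity.
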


Similarly, we provide a sufficient condition for the existence and uniqueness of the solution to the nonlinear system \eqref{eq.thm_power} on a bounded region:
\begin{corollary} \label{cor.unique_power}
    Assume $\mu_i>0$, $\nu_i\geq0$, $\sigma_i\geq0$, $\nu_i^2+\sigma_i^2>0$, $p_i \in (0,1)$ and $\theta_i \in (0, 1)$, and suppose the following three conditions hold, 
  \begin{gather}
    (1-0.5\underline{p})\left(\overline{\nu}^2+\overline{\sigma}^2+\overline{\lambda}\frac{\alpha_0^2}{(1-C\alpha_0)^{2-0.5\underline{p}}} + \lambda_0\frac{\beta_0^2}{(1-C\beta_0)^2}\right) < 1, \label{eq.unique_power_cond1} \\ 
    \overline{p}(1+\overline{\theta}) < 1, \label{eq.unique_power_cond2} \\ 
    \underline{\nu}^2-\overline{\sigma}(\overline{\sigma}-\underline{\sigma})-\lambda_0\beta_0^2\frac{1}{(1-C\beta_0)^2}\frac{1+C\beta_0}{1-C\beta_0} > 0, \label{eq.unique_power_cond3}
  \end{gather}
  where $\underline{p}=\min_{1\leq i\leq n} p_i$, $\overline{p}=\max_{1\leq i\leq n} p_i$, $\overline{\lambda}=\max_{1\leq i\leq n}\lambda_i$, $\overline{\nu}=\max_{1\leq i\leq n}\nu_i$, $\underline{\nu}=\min_{1\leq i\leq n}\nu_i$, $\overline{\sigma}=\max_{1\leq i\leq n}\sigma_i$, $\underline{\sigma}=\min_{1\leq i\leq n}\sigma_i$, $\alpha_0=\max_{1\leq i\leq n}|\alpha_i| < 1$, $\beta_0=\max_{1\leq i\leq n}|\beta_i| < 1$, $0<C\leq1$. Then the nonlinear system \eqref{eq.thm_power} has a unique solution in $\{(\pi_1,\cdots,\pi_n): |\pi_i|\leq C, \forall i\in\mathcal{I}\}$.
\end{corollary}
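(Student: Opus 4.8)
The plan is to recast the search for a solution of the nonlinear system \eqref{eq.thm_power} in the box $K:=\{(\pi_1,\dots,\pi_n):|\pi_i|\le C\}$ as a fixed-point problem on $K$ and invoke the Banach fixed-point theorem, mirroring the companion argument for Corollary~\ref{cor.unique_exp}. Since $p_i\in(0,1)$ and $\theta_i\in(0,1)$, the exponent $e_i:=p_i(1-\frac{\theta_i}{n})-1$ lies in $(-1,0)$, so the coefficient $(\nu_i^2+\sigma_i^2)(1-p_i(1-\frac{\theta_i}{n}))=-(\nu_i^2+\sigma_i^2)e_i$ of $\pi_i^*$ in \eqref{eq.thm_power} is strictly positive (using $\nu_i^2+\sigma_i^2>0$). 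I would therefore isolate this diagonal linear term and define $G:K\to\R^n$ by setting $G_i(\pi)$ equal to the remaining terms divided by $-(\nu_i^2+\sigma_i^2)e_i$, so that fixed points of $G$ are exactly the solutions of \eqref{eq.thm_power}. Existence will come from $G$ being a self-map of $K$, and uniqueness from $G$ being a contraction (equivalently, from strict diagonal dominance of the Jacobian of the left-hand side of \eqref{eq.thm_power}, which gives injectivity on the convex box $K$).

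The first step is to check that $G$ is well defined and maps $K$ into itself. Well-definedness requires the bases $1+\pi_i\alpha_i$, $1+\pi_i\beta_i$, and each factor $1+\pi_k\beta_k$ of $\widetilde{1+\pi\beta}$ to stay positive; this is where the standing hypotheses $\alpha_0<1$, $\beta_0<1$, and $0<C\le1$ enter, since they force $1+\pi_i\alpha_i\ge 1-C\alpha_0>0$ and $1+\pi_i\beta_i\ge 1-C\beta_0>0$ on $K$, matching the remark closing the proof of Theorem~\ref{thm.power}. The self-map property $G(K)\subseteq K$ then follows from uniform bounds on the numerator, using $|\widehat{\pi\sigma}|\le\overline\sigma C$ together with the lower bounds on the bases and the size control encoded by \eqref{eq.unique_power_cond1}--\eqref{eq.unique_power_cond3}.

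The crux is to show that $G$ is a contraction in the $\ell^\infty$-norm, i.e. that $\max_i\sum_{j}\sup_K|\partial_{\pi_j}G_i|<1$. The diagonal contribution $\partial_{\pi_i}G_i$ comes only from differentiating the two power terms $(1+\pi_i\alpha_i)^{e_i}$ and $(1+\pi_i\beta_i)^{e_i}/\widetilde{1+\pi\beta}^{\,p_i\theta_i}$; here I would use $n\ge2$ to bound $|e_i|=1-p_i(1-\frac{\theta_i}{n})\le 1-\tfrac12\underline p$, which is exactly the prefactor $(1-0.5\underline p)$ in \eqref{eq.unique_power_cond1}, and bound the bases raised to $e_i-1\ge-(2-\tfrac12\underline p)$ by $(1-C\alpha_0)^{-(2-0.5\underline p)}$ and $(1-C\beta_0)^{-2}$; condition \eqref{eq.unique_power_cond1} then caps this diagonal part below one. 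The off-diagonal contributions $\partial_{\pi_j}G_i$ ($j\ne i$) arise from the two averaged quantities: $\widehat{\pi\sigma}$ contributes a term proportional to $\frac{p_i\theta_i}{n}\sigma_i\sigma_j$, while the geometric mean $\widetilde{1+\pi\beta}=(\prod_{k\ne i}(1+\pi_k\beta_k))^{1/n}$ contributes, via the product rule, a term proportional to $\frac{p_i\theta_i}{n}\lambda_0|\beta_i\beta_j|(1+\pi_j\beta_j)^{-1}$. Summing over $j\ne i$ and using \eqref{eq.unique_power_cond2} to keep the factor $p_i\theta_i$ controlled, these are dominated precisely by the margin created in \eqref{eq.unique_power_cond3}, which plays the same role as \eqref{eq.unique_exp_cond2} did in the exponential case by ensuring the idiosyncratic volatility $\underline\nu^2$ dominates the common-noise coupling $\overline\sigma(\overline\sigma-\underline\sigma)$ and the $\beta$-channel. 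With the contraction modulus strictly below one, Banach's theorem yields a unique fixed point in the complete set $K$, hence a unique solution of \eqref{eq.thm_power} in the box.

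I expect the main obstacle to be the off-diagonal coupling induced by the common-noise term $\lambda_0\beta_i(1+\pi_i\beta_i)^{e_i}/\widetilde{1+\pi\beta}^{\,p_i\theta_i}$, whose geometric-mean denominator ties $\pi_i$ to all coordinates $\pi_k$ simultaneously; differentiating it requires careful product-rule bookkeeping and uniform positive lower bounds on every factor $1+\pi_k\beta_k$, and the delicate point is to assemble all these estimates so that the total Lipschitz constant falls below one under exactly the three stated inequalities rather than under some cruder, non-sharp bound.
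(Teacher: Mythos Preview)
Your overall strategy---recasting \eqref{eq.thm_power} as a fixed-point problem on the box $K$ and applying a contraction argument via the $\ell^\infty$ Jacobian bound---matches the paper's. The gap is in your choice of fixed-point map. You isolate the diagonal linear term and \emph{divide} by $(\nu_i^2+\sigma_i^2)\,|e_i|$, where $e_i=p_i(1-\tfrac{\theta_i}{n})-1$. But then in $\partial_{\pi_i}G_i$ the factor $|e_i|$ cancels (it appears once from differentiating the power $(1+\pi_i\alpha_i)^{e_i}$ and once in your denominator), leaving
\[
|\partial_{\pi_i}G_i|\;=\;\frac{\lambda_i\alpha_i^2(1+\pi_i\alpha_i)^{e_i-1}+\lambda_0\beta_i^2(1+\pi_i\beta_i)^{e_i-1}/\widetilde{1+\pi\beta}^{\,p_i\theta_i}}{\nu_i^2+\sigma_i^2}.
\]
This is \emph{not} bounded by condition \eqref{eq.unique_power_cond1}: there the prefactor $(1-0.5\underline p)$ multiplies $\overline\nu^2+\overline\sigma^2+\text{(jump terms)}$ as a \emph{sum}, whereas your normalization puts $\nu_i^2+\sigma_i^2$ in the \emph{denominator}. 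So your claim that ``condition \eqref{eq.unique_power_cond1} then caps this diagonal part below one'' fails, and with it the contraction constant estimate under the stated hypotheses.

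The paper instead sets $f_i(\pi):=\text{LHS}_i(\pi)+\pi_i$, so the system reads $f(\pi)=\pi$. The diagonal Jacobian entry is
\[
1-\bigl(1-p_i(1-\tfrac{\theta_i}{n})\bigr)\Bigl[(\nu_i^2+\sigma_i^2)+\lambda_i\alpha_i^2(1+\pi_i\alpha_i)^{e_i-1}+\lambda_0\beta_i^2\,\tfrac{(1+\pi_i\beta_i)^{e_i-1}}{\widetilde{1+\pi\beta}^{\,p_i\theta_i}}\Bigr],
\]
and condition \eqref{eq.unique_power_cond1} is used precisely to force this quantity into $(0,1)$, allowing the absolute value to be dropped. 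The row sum then telescopes to $1$ minus a diagonal piece plus the off-diagonal coupling; condition \eqref{eq.unique_power_cond2} gives $1-p_i(1-\tfrac{\theta_i}{n})\ge\frac{n-1}{n}p_i\theta_i$, which lets the $(\nu_i^2+\sigma_i^2)$ part of the diagonal absorb the $\sigma$- and $\beta$-coupling, and condition \eqref{eq.unique_power_cond3} finishes the estimate. If you switch to this ``add the identity'' fixed-point map, your bookkeeping for the off-diagonal terms (including the product-rule differentiation of $\widetilde{1+\pi\beta}$) carries over essentially unchanged.
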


\subsection{CRRA: Logarithmic utility} \label{sec.log}

The case of logarithmic utility, although it is of the CRRA type, needs to be treated separately.
The model remains the same as in subsection \ref{sec.power}: the stock prices $S_t^i$ follow \eqref{eq.stock}; $\pi_t^i$ denotes the proportion of wealth $X_t^i$ to purchase the $i$-th stock; and $X_t^i$ follows \eqref{eq.dXt_power}. But the utility function has the following logarithmic form:
\begin{equation} \label{eq.utility_log}
  U_i(x_1,\cdots,x_n) = \log \frac{x_i}{\left(\prod_{k=1}^n x_k\right)^{\frac{\theta_i}{n}}},
\end{equation}
where $\theta_i\in(0,1)$ denotes the agent’s interaction towards others. 

The following result addresses the unique constant Nash equilibrium in the case of logarithmic utility. As seen below, unlike the first two cases, here the control $\pi_i^*$ for the $i$-th agent is independent of others. Consequently, the multi-agent game degenerates into several independent one-dimensional stochastic control problems. 

\begin{theorem} \label{thm.log}
  Assume $\mu_i>0$, $\nu_i\geq0$, $\sigma_i\geq0$, $\nu_i^2+\sigma_i^2>0$ and $\theta_i \in(0,1)$, then the constant Nash equilibrium $(\pi_1^*,\cdots,\pi_n^*)$ satisfies:
  \begin{equation} \label{eq.thm_log}
    \mu_i-(\nu_i^2+\sigma_i^2)\pi_i^*+\lambda_i\alpha_i\left(\frac{1}{1+\pi_i^*\alpha_i}-1\right)+\lambda_0\beta_i\left(\frac{1}{1+\pi_i^*\beta_i}-1\right) = 0, \quad \forall i \in \mathcal{I}.
  \end{equation}
Moreover, it has a unique solution $\pi_i^*$ satisfying $1+\pi_i^*\alpha_i>0$ and $1+\pi_i^*\beta_i>0$.
\end{theorem}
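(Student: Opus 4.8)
The plan is to exploit the additive structure that the logarithm imposes on the utility, which (unlike the exponential and power cases) completely decouples agent $i$'s optimization from her peers' strategies. First I would rewrite the terminal utility as
\[
U_i(\overrightarrow{X}_T) = \Big(1-\tfrac{\theta_i}{n}\Big)\log X_T^i - \tfrac{\theta_i}{n}\sum_{k\neq i}\log X_T^k,
\]
and observe that, once the peers' constant strategies $\{\pi_k\}_{k\neq i}$ are fixed, the random quantity $\sum_{k\neq i}\log X_T^k$ does not depend on $\pi_i$. Hence maximizing $\E[U_i]$ over $\pi_i$ is equivalent to maximizing $(1-\theta_i/n)\,\E[\log X_T^i]$, and since $\theta_i\in(0,1)$ forces $1-\theta_i/n>0$, to maximizing $\E[\log X_T^i]$ alone.

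Next I would apply It\^o's formula to $\log X_t^i$ exactly as in the proof of Theorem~\ref{thm.power}; taking expectations kills the martingale terms and yields $\E^{t,x}[\log X_T^i] = \log x + g(\pi_i)(T-t)$, with
\[
g(\pi_i) = \pi_i\mu_i - \tfrac{1}{2}\pi_i^2(\nu_i^2+\sigma_i^2) + \lambda_i\big(\log(1+\pi_i\alpha_i)-\pi_i\alpha_i\big) + \lambda_0\big(\log(1+\pi_i\beta_i)-\pi_i\beta_i\big).
\]
Equivalently, within the paper's dynamic programming framework one makes the \emph{additive} ansatz $v^i(t,x,y)=\widetilde{U}_i(x,y)+h(t)$ with $h(T)=0$; since the generator is still \eqref{eq.generator_power} (the processes $X^i$ and $Y^i:=(\prod_{k\neq i}X^k)^{1/n}$ are unchanged from the power case) and $\widetilde U_i(x,y)=(1-\theta_i/n)\log x-\theta_i\log y$ is affine in the logarithms, the HJB equation collapses to maximizing the same $g(\pi_i)$, with the $\log y$ contribution entering only as a $\pi_i$-independent constant. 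The first-order condition $g'(\pi_i)=0$ is precisely \eqref{eq.thm_log}, confirming that the candidate equilibrium strategy is characterized by that equation and, crucially, is a constant independent of the other agents' choices.

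The only point requiring genuine care is existence and uniqueness of the maximizer on the admissible set $\mathcal{D}_i := \{\pi_i : 1+\pi_i\alpha_i>0,\ 1+\pi_i\beta_i>0\}$, the open interval on which $g$ is finite (it contains $0$ and is therefore nonempty). Here I would compute
\[
g''(\pi_i) = -(\nu_i^2+\sigma_i^2) - \lambda_i\frac{\alpha_i^2}{(1+\pi_i\alpha_i)^2} - \lambda_0\frac{\beta_i^2}{(1+\pi_i\beta_i)^2} < 0,
\]
using $\nu_i^2+\sigma_i^2>0$, so $g$ is strictly concave on $\mathcal{D}_i$ and any critical point is unique. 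For existence I would argue that $g\to-\infty$ at both ends of $\mathcal{D}_i$: whenever a finite endpoint is approached, the relevant $\log(1+\pi_i\alpha_i)$ or $\log(1+\pi_i\beta_i)$ diverges to $-\infty$; and whenever $\mathcal{D}_i$ is unbounded in a direction, the quadratic term $-\tfrac12(\nu_i^2+\sigma_i^2)\pi_i^2$ dominates the logarithmic growth and forces $g\to-\infty$. A strictly concave function tending to $-\infty$ at both ends of an open interval attains its maximum at a unique interior point, which is exactly the unique solution of \eqref{eq.thm_log} with $1+\pi_i^*\alpha_i>0$ and $1+\pi_i^*\beta_i>0$. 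Since this optimizer does not involve $\{\pi_k\}_{k\neq i}$, collecting the $n$ individual optimizers yields the unique constant Nash equilibrium and shows the game splits into $n$ independent one-dimensional control problems. The main obstacle is thus the coercivity argument handling the several possible shapes of $\mathcal{D}_i$ (finite versus infinite endpoints according to the signs of $\alpha_i,\beta_i$); everything else follows the template of Theorems~\ref{thm.exp}--\ref{thm.power}.
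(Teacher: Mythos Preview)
Your proposal is correct and follows essentially the same route as the paper: the additive ansatz $v^i=\widetilde U_i+h(t)$ in the HJB equation (equivalently, direct computation of $\E[\log X_T^i]$ via It\^o) yields the first-order condition \eqref{eq.thm_log}, and the decoupling from peers is exactly the observation the paper makes. Your uniqueness argument via strict concavity of $g$ and coercivity at the endpoints of $\mathcal D_i$ is the same content as the paper's Appendix~\ref{sec.appendix_proof_log}, which instead shows $g'$ is strictly decreasing and runs an explicit case analysis on the signs of $\alpha_i,\beta_i$ to check $g'$ changes sign; the two formulations are equivalent.
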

\begin{proof}
  Assuming that all other agents ($k\neq i$) use constant strategies $\pi_k$. The utility function $U_i$ can be written as 
  \begin{equation}
    U_i(X_T^1,\cdots,X_T^n) = \log \frac{X_T^i}{\left(\prod_{k=1}^n X_T^k\right)^{\frac{\theta_i}{n}}} = \log \frac{(X_T^i)^{1-\frac{\theta_i}{n}}}{(Y_T^i)^{\theta_i}} = \widetilde{U}_i(X_T^i,Y_T^i),
  \end{equation}
  where $Y_t^i:=(\prod_{k\neq i}X_t^k)^{\frac{1}{n}}$ is given in \eqref{eq.dYt_power}, and $\widetilde{U}_i(x,y) = \log(x^{1-\frac{\theta_i}{n}}y^{-\theta_i})$.
  Consequently, the value function $v^i$
  satisfies the HJB equation \eqref{eq.HJB_power}, with the terminal condition replaced by $v^i(T,x,y)=\log(x^{1-\frac{\theta_i}{n}}y^{-\theta_i})$. Similar to the reasoning in the proof of Theorem~\ref{thm.power}, given the strict concavity of the $\log$ function, this optimization problem has a unique maximizer.
  
  To solve the HJB equation, one makes the ansatz $v^i(t,x,y) = f(t) + \widetilde{U}_i(x,y)$ and it gives
  \begin{equation}
    f'(t)+k=0, \quad f(T)=0,\quad\text{with}
  \end{equation}
  \begin{equation} \label{eq.log_k}
    \begin{aligned}
      k = &\ (1-\frac{\theta_i}{n})\pi_i^*\mu_i - \frac{1}{2}(\nu_i^2+\sigma_i^2)(\pi_i^*)^2(1-\frac{\theta_i}{n})-\theta_i\eta^* + \frac{1}{2}\left(\frac{1}{n}\widehat{\pi^2\nu^2}+\widehat{\pi\sigma}^2\right)\theta_i \\ 
      & +\lambda_i\left(\log(1+\pi_i^*\alpha_i)-\pi_i^*\alpha_i\right)(1-\frac{\theta_i}{n}) \\
      & + \sum_{k\neq i} \lambda_k\left(-\frac{\theta_i}{n}\log(1+\pi_k^*\alpha_k)+\theta_i\left((1+\pi_k^*\alpha_k)^{\frac{1}{n}}-1\right)\right) \\ 
      & + \lambda_0\left(\log\frac{(1+\pi_i^*\beta_i)^{1-\frac{\theta_i}{n}}}{\widetilde{1+\pi\beta}^{\theta_i}}-\pi_i^*\beta_i(1-\frac{\theta_i}{n})+\theta_i(\widetilde{1+\pi\beta}-1)\right).
    \end{aligned}
  \end{equation}
  Thus one deduces  $f(t)=k(T-t)$ and agent $i$'s equilibrium strategy $\pi_i^\ast$ satisfies
  \begin{equation}
    \mu_i-(\nu_i^2+\sigma_i^2)\pi_i^*+\lambda_i\alpha_i\left(\frac{1}{1+\pi_i^*\alpha_i}-1\right)+\lambda_0\beta_i\left(\frac{1}{1+\pi_i^*\beta_i}-1\right) = 0.
  \end{equation}
  Now following the same reasoning as in the power utility case, we conclude~\eqref{eq.thm_log}. 
  The rest of the proof is deferred to Appendix \ref{sec.appendix_proof_log}. The uniqueness of system~\eqref{eq.thm_log} gives the uniqueness of constant Nash equilibrium.
  Note that, 
  The additional conditions $1+\pi_i^*\alpha_i>0$ and $1+\pi_i^*\beta_i>0$ are introduced because the expression \eqref{eq.log_k} involves $\log(1+\pi_i^*\alpha_i)$ and $\log(1+\pi_i^*\beta_i)^{1-\frac{\theta_i}{n}}$. 
\end{proof}

\section{Deep reinforcement learning framework} \label{sec.algorithm}

In this section, we will introduce a computational framework for solving stochastic control problems and differential games where the state process is driven by a general It\^o-L\'evy process. The framework is based on actor-critic-type algorithms and can handle cases where the agent controls the diffusion and jump terms, a scenario not addressed in the existing literature.

We start by solving the control problem under the general L\'evy model in  Section~\ref{sec.stochastic_control}. A special case where the jumps in the controlled state are modeled by compensated Poisson processes will be discussed in Section~\ref{sec.poisson}. We then present the algorithm for games in Section~\ref{sec.dlgame}.

\subsection{Stochastic control problems with jumps} \label{sec.stochastic_control}
Consider a generic stochastic control problem under the L\'evy model, where the state process $X_t \in \R^d$ follows a controlled It\^o-L\'evy process:
\begin{equation} \label{eq.dXt}
  \dif X_t = b(X_{t-}, u_{t})\dif t + \sigma(X_{t-}, u_{t})\dif W_t + \int_{\R^d} G(X_{t-},z,u_{t}) \widetilde{N}(\dif t,\dif z).
\end{equation}
Here, $\{W_t\}_{t=0}^T$ is a $d$-dimensional standard Brownian motion on the filtered probability space $(\Omega,\mathcal{F},(\mathcal{F}_t)_{t\geq0},\mathbb{P})$, $N$ is a Poisson random measure with the L\'evy measure $\nu$ satisfying
\[
\nu(\{0\}) = 0, \quad \int_{\R^d} 1 \wedge |z|^2 \nu(\dif z) < \infty, \quad \int_{|z| \geq 1} |z|^2 \nu(\dif z) < \infty,
\]
$\widetilde{N}(\dif t,\dif z) := N(\dif t,\dif z) - \nu(\dif z)\dif t$ is the compensated Poisson random measure, and $\{u_t\}_{t\geq0}\in U\subset\R^{d_c}$ is an $\mathcal{F}_t$ progressively measurable process. 

The agent aims to maximize her expected utility of running and terminal rewards:
\begin{equation} \label{eq.J}
  J^u(t,x) = \E^{t,x} \left[ \int_{t}^{T} f(s,X_s,u_s) \dif s + g(X_T) \right],
\end{equation}
where $\E^{t,x}$ denotes the expectation under the condition $X_t=x$. The coefficients $b(x,u)\in\R^d$,  $\sigma(x,u)\in\R^{d\times d}$, $G(x,z,u)\in\R^d$, the control space $U\subset\R^{d_c}$, and the reward functions $f(t,x,u)$, $g(x) \in \R$ are all known.

To find the optimal strategy $u^\ast$ for problem~\eqref{eq.dXt}--\eqref{eq.J}, one can define the optimal value function
\begin{equation} \label{eq.value_fun}
  v(t,x) = \sup_{u\in U} J^u(t,x).
\end{equation}
By dynamic programming principle, the value function $v(t,x)$ solves the
partial-integro differential equation (PIDE)
\cite{pham2009continuous,oksendal2019stochastic}:
\begin{equation} \label{eq.HJB}
  \left\{
  \begin{aligned}
      & \frac{\partial v}{\partial t}(t,x) + \sup_{u\in U} \left[\mathcal{L}^u v(t,x) + f(t,x,u)\right] = 0, \ t \in [0,T), \\
      & v(T, x) = g(x),
  \end{aligned}
  \right.
\end{equation}
with $\mathcal{L}^u$ being the generator of $X_t$
\begin{equation} \label{eq.generator}
  \begin{aligned}
    \mathcal{L}^u v(t,x) = &\ b(x,u)\cdot\nabla v(t,x) + \frac{1}{2}\mbox{Tr}\left[\sigma(x,u)\sigma^T(x,u)\mbox{H}(v(t,x))\right] \\ 
    & + \int_{\R^d}(v(t,x+G(x,z,u)) - v(t,x) - G(x,z,u)\cdot\nabla v(t,x))\nu(\dif z), \\
  \end{aligned}
\end{equation}
where $\nabla v$ and $\mbox{H}(v)$ denote the gradient and the Hessian matrix of $v(t,x)$ with respect to the spatial variable $x\in\R^d$, and $\mbox{Tr}$ refers to the trace of a matrix. 

The rest of this subsection will present a reinforcement learning framework to solve both the solution $v(t,x)$ to equation \eqref{eq.HJB} and the optimal control $u^*(t,x)$ simultaneously.

\subsubsection{The actor-critic framework}

Reinforcement learning is a machine learning approach that learns the optimal policy through the interaction of an agent with its environment. It combines aspects of both policy-based and value-based methods. The actor refers to the control function $u(t,x)$, corresponding to the current policy; and the critic refers to the value function $J^u(t,x)$, used to evaluate the goodness of the current policy $u$. The actor-critic method consists mainly of two steps: policy evaluation and policy improvement. By implementing them iteratively, one hopes the actor converges to the optimal control $u^*(t,x)$ and the critic to the optimal value function $v(t,x)=\sup_{u\in U} J^u(t,x)$. Note that in high-dimensional settings, $u$ and $J^u$ are usually approximated (parameterized) by deep neural networks, leading the algorithms known as deep reinforcement learning.  

Before delving into these two steps, let's first outline several key elements of the reinforcement learning model in this problem:
\begin{itemize}
  \item \texttt{State.} The state space $S=[0,T]\times\R^d$ encompasses all possible time-space pairs $(t,x)$, and one refers to $X_t$ as the state process. 
  \item \texttt{Transition.} The transition probability 
  \begin{equation}
    P_{s_ns_{n+1}} = \mathbb{P}(S_{n+1}=(t_{n+1},x_{n+1})|S_{n}=(t_n,x_n)) = \mathbb{P}(X_{t_{n+1}}=x_{n+1}|X_{t_n}=x_n)
  \end{equation}
  is given by the Euler scheme of~\eqref{eq.dXt}
  \begin{equation} \label{eq.DeltaXt}
    \begin{aligned}
      X_{t_{n+1}} = &\ X_{t_n} + b(X_{t_n},u(t_n,X_{t_n}))\Delta t + \sigma(X_{t_n},u(t_n,X_{t_n}))\Delta W_n \\
      & + \sum_{i=N_n+1}^{N_{n+1}} G(X_{t_n},z_i,u(t_n,X_{t_n})) - \Delta t\int_{\R^d} G(X_{t_n},z,u(t_n,X_{t_n})) \nu(\dif z), \\
    \end{aligned}
  \end{equation}
  representing the probability of the state being $x_{n+1}$ at time $t_{n+1}$ when it was $x_n$ at time $t_n$, and we equally divide the time $0=t_0<t_1<\cdots<t_L=T$ with intervals of length $\Delta t$. In equation \eqref{eq.DeltaXt}, $\Delta W_n:= W_{t_{n+1}} - W_{t_n}$ samples from the normal distribution $\mathcal{N}(0,\Delta t)$, $N_n$ denotes the number of jumps occurring in the interval $[0, t_n]$ under the Poisson random measure $N(t,\dif z)$, and $z_i$ represents the size of the $i$-th jump after sorting the jumps in $[0,T]$ in the chronological order.
  \item \texttt{Policy and value function.} The policy (control function) $u(t,x)$ is approximated by a neural network $\mathcal{N}_{\pi}$ which is a function of the state $(t,x)$, and we consider deterministic policy in this case. The value function $J^u(t,x)$ is approximated by a neural network $\mathcal{N}_v$ taking inputs $(t,x)$ and it measures the quality of the current policy $u$. 
  \item \texttt{Reward.} According to equation \eqref{eq.J}, the reward from $t_n$ to $t_{n+1}$ in an episode reads 
  \begin{equation}
    R_{t_nt_{n+1}} = \int_{t_n}^{t_{n+1}} f(t,X_t,u_t)\dif t.
  \end{equation}
  However, an alternative reward will be introduced in the subsequent in \eqref{eq.J_tilde}.
\end{itemize}

\medskip
\noindent\textbf{Policy evaluation.} The objective of policy evaluation is to compute the value function $J^u(t,x)$ associated with a given policy $u(t,x)$. We use temporal difference learning for policy evaluation, an incremental learning procedure driven by the error between temporally successive predictions. In other words, it allows for updates as the state transitions from $t_{n}$ to $t_{n+1}$, rather than waiting for the entire interval $[0,T]$. 
Recall that $\mathcal{N}_v$ denotes the neural network approximation of the value function $J^u$, motivated by the identity
\[
J^u(t_n, X_{t_n}) = \mathbb{E}[R_{t_nt_{n+1}} + J^u(t_{n+1}, X_{t_{n+1}}) \vert X_{t_n}],
\]
the classical one-step temporal difference learning updates $\mathcal{N}_v$ by (cf. \cite[Chapter~6]{sutton2018reinforcement}):
\begin{equation} \label{eq.TD_update}
  \mathcal{N}_v(t_n,X_{t_n}) \leftarrow \mathcal{N}_v(t_n,X_{t_n}) + \alpha\left(R_{t_nt_{n+1}} + \mathcal{N}_v(t_{n+1}, X_{t_{n+1}}) - \mathcal{N}_v(t_{n},X_{t_n})\right),
\end{equation}
where $R_{t_nt_{n+1}} + \mathcal{N}_v(t_{n+1}, X_{n+1}) - \mathcal{N}_v(t_{n},X_{n})$ is referred to as the \emph{TD error}. 

Our update rule for $\mathcal{N}_v$ improves \eqref{eq.TD_update}, and
the design of our loss function for the critic aligns with the approach proposed in \cite{lu2023temporal}.
For a given control 
$u(t,x)$, its associated value function $J^u(t,x)$ defined by \eqref{eq.J} satisfies \cite{applebaum2009levy, oksendal2013stochastic, pham2009continuous}:
\begin{equation}
  \frac{\partial J^u}{\partial t}(t,x) + \mathcal{L}^u J^u(t,x) + f(t,x,u) = 0, \quad J^u(T,x)=g(x),
\end{equation}
with $\mathcal{L}^u$ being defined in \eqref{eq.generator}. Applying the L\'{e}vy-type It\^{o} formula \cite{applebaum2009levy} to $J^u(t,X_t)$ gives:
\begin{equation*}
  \begin{aligned}
    \dif J^u(t,X_{t}) = & \left[\frac{\partial J^u}{\partial t}(t,X_{t-}) + \mathcal{L}^u J^u(t,X_{t-}) \right]\dif t + \nabla J^u(t,X_{t-})\cdot\sigma(X_{t-},u(t,X_{t-})) \dif W_t \\ 
    & + \int_{R^d} \left[J^u(t, X_{t-}+G(X_{t-},z,u(t,X_{t-}))) - J^u(t,X_{t-},u(t,X_{t-}))\right] \widetilde{N}(\dif t,\dif z) \\ 
    = & -f(t,X_{t-},u(t,X_{t-}))\dif t + \left(\sigma(X_{t-},u(t,X_{t-}))^T \nabla J^u(t,X_{t-})\right)^T\dif W_t \\ 
    & + \int_{R^d} \left[J^u(t, X_{t-}+G(X_{t-},z,u(t,X_{t-}))) - J^u(t,X_{t-},u(t,X_{t-}))\right] \widetilde{N}(\dif t,\dif z), \\ 
  \end{aligned}
\end{equation*}
with the terminal condition $J^u(T,X_T)=g(X_T)$. Integrating over $[t_n, t_{n+1}]$ and comparing with \eqref{eq.TD_update}, we define the alternative reward $\widetilde{R}_{t_nt_{n+1}}$ by:
\begin{equation} \label{eq.reward}
  \begin{aligned}
    \widetilde{R}_{t_nt_{n+1}} = &\ \int_{t_n}^{t_{n+1}}f(t,X_t,u(t,X_t))\dif t - \int_{t_n}^{t_{n+1}}\left(\sigma(X_t,u(t,X_t))^T \nabla \mathcal{N}_v(t,X_t)\right)^T\dif W_t \\ 
    & - \int_{t_n}^{t_{n+1}}\int_{R^d} \left[\mathcal{N}_v(t, X_t+G(X_t,z,u(t,X_t))) - \mathcal{N}_v(t,X_t,u(t,X_t))\right] \widetilde{N}(\dif t,\dif z), \\ 
  \end{aligned}
\end{equation}
and use this TD error $\widetilde{R}_{t_nt_{n+1}} + \mathcal{N}_v(t_{n+1}, X_{t_{n+1}}) - \mathcal{N}_v(t_{n},X_{t_n})$ as the update rule in \eqref{eq.TD_update}.
Note that the difference between $R_{t_nt_{n+1}}$ and $\widetilde{R}_{t_nt_{n+1}}$ lies in the  two additional martingale terms in $\widetilde{R}_{t_nt_{n+1}}$. 

For any control $u(t,x)$, if $\mathcal{N}_v$ is precisely the value function under the current control $u$, that is, $\mathcal{N}_v=J^u$, then we have 
\begin{align*}
 & \widetilde{R}_{t_nt_{n+1}} + \mathcal{N}_v(t_{n+1}, X_{t_{n+1}}) - \mathcal{N}_v(t_{n},X_{t_n}) = 0, \quad \mathbb{P}-a.s., \\ 
  &\E^{t,x} \left[R_{t_nt_{n+1}} + \mathcal{N}_v(t_{n+1}, X_{t_{n+1}}) - \mathcal{N}_v(t_{n},X_{t_n})\right] = 0, \quad t\leq t_n<t_{n+1},\\
&\E^{t,x} \big[\widetilde{R}_{t_nt_{n+1}} + \mathcal{N}_v(t_{n+1}, X_{t_{n+1}}) - \mathcal{N}_v(t_{n},X_{t_n})\big] \\
 &\qquad \qquad  = \E^{t,x} \left[R_{t_nt_{n+1}} + \mathcal{N}_v(t_{n+1}, X_{t_{n+1}}) - \mathcal{N}_v(t_{n},X_{t_n})\right].
\end{align*} 
In the case of infinite horizon stochastic control problems driven by Brownian motions,  \cite{zhou2021actor} discussed the advantage of $\tilde R_{t_nt_{n+1}}$ over $R_{t_nt_{n+1}}$ in theory and conducted numerical experiments on both choices. The main reason is that the $\mathbb{P}$-a.s. condition is stronger than the in-expectation one, thus exhibiting a smaller variance. We expect a similar reasoning in our setting, therefore, will use the reward $\tilde R_{t_nt_{n+1}}$ in designing the critic loss \eqref{eq.CriticLoss1} as well as in the numerical examples in Section~\ref{sec.numerical}. 

At the terminal time $T$, it is desired to have 
\begin{equation}\label{eq.terminal}
    \mathcal{N}_v(T,X_T) = g(X_T),
\end{equation}
which will be integrated into the loss function. Since $\mathcal{N}_v$ is updated after each transition from $X_{t_n}$ to $X_{t_{n+1}}$, it does not hurt to enforce \eqref{eq.terminal} simultaneously. However, $X_T$ is not observable when $t_{n+1} < T$. To remedy this issue, one can use observations from the previous iteration to substitute for $\mathcal{N}_v(T, \cdot)$ and $g(\cdot)$. 

\medskip
\noindent\textbf{Policy improvement.}
The objective of policy improvement is to find the control that maximizes its value function. As discussed earlier in this section, the agent aims to maximize 
over all controls $u \in U$ :
\begin{equation} \label{eq.supJ}
  v(t,x) \equiv \sup_{u\in U} J^u(t,x) = \sup_{u\in U} \E^{t,x} \left[ \int_{t}^{T} f(s,X_s,u_s) \dif s + g(X_T) \right].
\end{equation} 
Therefore, improving the actor $u$ by maximizing $J^u(t,x)$ is a natural approach. However, considering the alteration made to the reward $R_{t_n t_{n+1}}$ by adding martingale terms in \textbf{policy evaluation}, we make a similar adjustment to $J^u(t,x)$:
{\small
\begin{equation} \label{eq.J_tilde}
  \begin{aligned}
    &\widetilde{J}^u(t,x) = \ \E^{t,x} \left[ \int_{t}^{T} f(s,X_s,u_s) \dif s - \int_{t}^{T}\left(\sigma(X_s,u(s,X_s))^T \nabla \mathcal{N}_v(s,X_s)\right)^T\dif W_s \right. \\
    & - \left. \int_{t}^{T}\int_{R^d} \left[\mathcal{N}_v(s, X_s+G(X_s,z,u(s,X_s))) - \mathcal{N}_v(s,X_s,u(s,X_s))\right] \widetilde{N}(\dif s,\dif z) + g(X_T) \right].
  \end{aligned}
\end{equation}}This provides an alternative way for policy improvement, i.e., improving the actor $u$ by maximizing $\widetilde J^u(t,x)$.

In theory, it is clear that
\begin{equation}
  \widetilde{J}^u(t,x) = J^u(t,x),
\end{equation}
for any control $u$ and any approximation of the current value function $\mathcal{N}_v$. In practice, a numerical experiment is conducted in Section~\ref{sec.numerical.merton} to compare the effectiveness of these two objectives: $J$ \emph{vs.} $\widetilde J$. Table~\ref{tab.merton_main} shows that the accuracy is slightly better when using $\widetilde{J}$, but the difference is not significant.  
Additionally, we also attempted to replace the terminal function $g(x)$ with the approximated value function $\mathcal{N}_v(T,x)$, and the difference between the two approaches, i.e. $\E^{t,x}[ \int_{t}^{T} f(s,X_s,u_s) \dif s + g(X_T)]$ and $\E^{t,x}[ \int_{t}^{T} f(s,X_s,u_s) \dif s + \mathcal{N}_v(T,X_T)]$ is also not significant.  

\subsubsection{Implementation details} \label{sec.implementation_details}

This section dedicates to the implementation details of the proposed algorithm, including neural network selections, the computation of non-local terms and the loss function, and training stabilization.

\smallskip
\noindent\textbf{NN architecture.} 
In additional to parameterize the control $u(t,x)$ and the associated value function $J^u(t,x)$ by neural networks $\mathcal{N}_\pi$, $\mathcal{N}_v$, we introduce $\mathcal{N}_{non}$ for the non-local term $\int_{\R^d}(J^u(t,x+G(x,z,u))-J^u(t,x))\nu(\dif z)$ to aid in the computation of the reward. Specifically, we employ residual networks (ResNet) to enhance the generalization capabilities. Each neural network consists of an $n+1$-dimensional input $(t,x)$, a linear layer, several residual blocks, another linear layer, and an output layer. Each residual block consists of two linear layers with activation functions and a residual connection, as illustrated in Figure \ref{fig.ResNet}.
\begin{figure}[htbp]
  \centering
  \includegraphics[width=40em]{./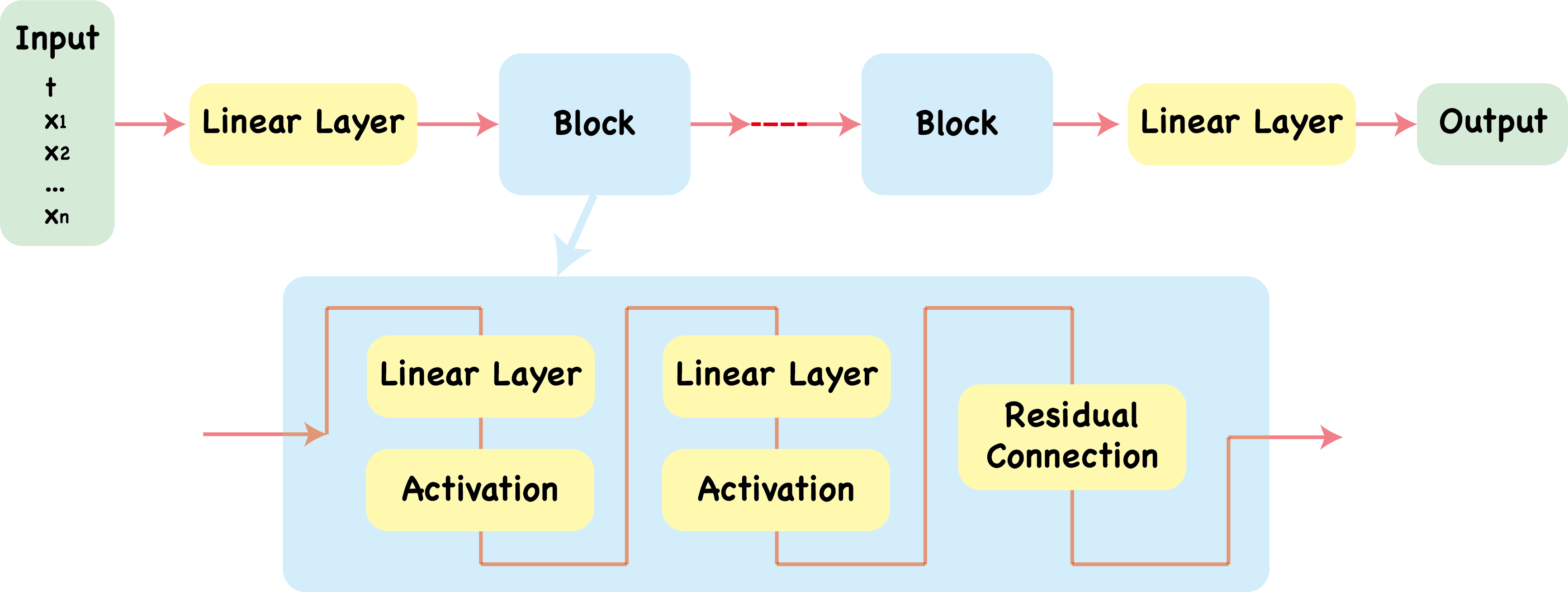}
  \caption{The structure for the actor $u(t,x)$, the critic $J^u(t,x)$ and the non-local term $\int_{\R^d}(J^u(t,x+G(x,z,u))-J^u(t,x))\nu(\dif z)$ employed in this work.} \label{fig.ResNet}
\end{figure}

\smallskip
\noindent\textbf{Loss function.}
Recall that we denote the neural network representing the control function $u(t,x)$ as $\mathcal{N}_\pi(t,x)$, the one representing the value function $J^u(t,x)$ as $\mathcal{N}_v(t,x)$, and the one representing the non-local term $\int_{\R^d}(J^u(t,x+G(x,z,u))-J^u(t,x))\nu(\dif z)$ as $\mathcal{N}_{non}(t,x)$. The update of the state process $X_t$ when time moves from $t_n$ to $t_{n+1} \equiv t_n+\Delta t$ is  given by \eqref{eq.DeltaXt}
\begin{equation} \label{eq.DeltaXt_nn}
  \begin{aligned}
    X^j_{t_{n+1}} = &\ X^j_{t_n} + b(X^j_{t_n},\mathcal{N}_\pi(t_n,X^j_{t_n}))\Delta t + \sigma(X^j_{t_n},\mathcal{N}_\pi(t_n,X^j_{t_n}))\Delta W_n  \\
  & + \sum_{i=N_n+1}^{N_{n+1}} G(X^j_{t_n},z_i,\mathcal{N}_\pi(t_n,X^j_{t_n})) - \Delta t\int_{\R^d} G(X^j_{t_n},z,\mathcal{N}_\pi(t_n,X^j_{t_n})) \nu(\dif z),
  \end{aligned}
\end{equation}
where the superscript $j$ indicates the $j$-th sample path. The associated reward $\widetilde R$ (cf. \eqref{eq.reward}) becomes: 
\begin{equation} \label{eq.reward_nn}
  \begin{aligned}
     \widetilde{R}^j_{t_nt_{n+1}} = &  \ f(t_n,X^j_{t_n},\mathcal{N}_\pi(t_n,X^j_{t_n}))\Delta t 
     -  \left(\sigma(X^j_{t_n},\mathcal{N}_\pi(t_n,X^j_{t_n}))^T \nabla \mathcal{N}_v(t_n,X^j_{t_n})\right)^T\Delta W_n  \\ 
    & -  \Bigg( \sum_{i=N_n+1}^{N_{n+1}} [\mathcal{N}_v(t_n, X^j_{t_n}+G(X^j_{t_n},z^j_i,\mathcal{N}_\pi(t_n,X^j_{t_n})))-\mathcal{N}_v(t_n,X^j_{t_n})] \\
    & \qquad  - \Delta t \mathcal{N}_{non}(t_n,X^j_{t_n}) \Bigg).
  \end{aligned}
\end{equation}
Here, $N_n$ represents the number of jumps occurring in the interval $[0,t_n]$, and $z_i$ represents the size of the $i$-th jump, sorted in the chronological order over the entire time interval $[0,T]$. In equations \eqref{eq.DeltaXt_nn} and \eqref{eq.reward_nn}, $\Delta W_n$ is a same sample drawn from  $\mathcal{N}(0,\Delta t)$.

We now are ready to define the \texttt{CriticLoss}, which has three parts. The first component is from the TD error:
\begin{equation} \label{eq.CriticLoss1}
  \mbox{\texttt{CriticLoss}}^1_{t_n} = \frac{1}{M} \sum_{j=1}^M \left( \widetilde{R}^j_{t_nt_{n+1}} + \mathcal{N}_v(t_{n+1},X^j_{t_{n+1}}) - \mathcal{N}_v(t_n, X^j_{t_n}) \right)^2,
\end{equation}
where we have adopted the idea from Least Squares Temporal Differences (LSTD) \cite{boyan1999least}, and $M$ denotes the sample number. The second component takes care of the terminal  condition:
\begin{equation} \label{eq.CriticLoss2}
  \mbox{\texttt{CriticLoss}}^2_{t_n} = \frac{1}{L}\frac{1}{M} \sum_{j=1}^M \left(\mathcal{N}_v(T,X^j_T) - g(X^j_T)\right)^2,
\end{equation}
and $L$ represents the number of sub-intervals into which $[0,T]$ is divided, motivated by \cite{zeng2022deep} which proposed to equally distribute the loss $\frac{1}{M} \sum_{j=1}^M \left(\mathcal{N}_v(T,X^j_T) - g(X^j_T)\right)^2$ into each interval $[t_n,t_{n+1}]$. Note that $X_T$ is not observed at $t_n <T$; observations of $X_T$ from the previous iteration will be used to evaluate \eqref{eq.CriticLoss2}.
The third component ensures the consistency between $\mathcal{N}_v$ and $\mathcal{N}_{non}$.  
For any control function $\mathcal{N}_\pi(t,x)$ and the value function $\mathcal{N}_v(t,x)$, 
\begin{equation*}
  \left\{ \int_{0}^t \int_{\R^d}\left[\mathcal{N}_v(t,X_t+G(X_t,z,\mathcal{N}_\pi(t,X_t))) - \mathcal{N}_v(t,X_t)\right]\widetilde{N}(\dif t,\dif z) \right\}_{t=0}^T
\end{equation*}
is a martingale, implying 
\begin{equation*}
  \E \left[ \int_{t_n}^{t_{n+1}} \int_{\R^d} \mathcal{N}_v(t,X_t+G(X_t,z,\mathcal{N}_\pi(t,X_t))) - \mathcal{N}_v(t,X_t) \widetilde{N}(\dif t,\dif z) \bigg| \mathcal{F}_{t_n} \right] = 0
\end{equation*}
for any $[t_n,t_{n+1}]\subset[0,T]$. Its  empirical version 
{\small\begin{equation} \label{eq.CriticLoss3}
  \begin{aligned}
    & \mbox{\texttt{CriticLoss}}^3_{t_n} = \\
    & \Bigg|
    \frac{1}{M} \sum_{j=1}^M\Big(\sum_{i=N_n+1}^{N_{n+1}} [\mathcal{N}_v(t_n,X^j_{t_n}+G(X_{t_n},z^j_i,\mathcal{N}_\pi(t_n,X^j_{t_n}))) - \mathcal{N}_v(t_n,X^j_{t_n})] - \Delta t \mathcal{N}_{non}(t_n, X^j_{t_n})\Big)
    \Bigg|.
  \end{aligned}
\end{equation}}serves as the loss function for the non-local term. As a result, the loss function for policy evaluation at time $t_n$ is
\begin{equation} \label{eq.CriticLoss}
  \mbox{\texttt{CriticLoss}}_{t_n} = \mbox{\texttt{CriticLoss}}^1_{t_n} + \mbox{\texttt{CriticLoss}}^2_{t_n} + \mbox{\texttt{CriticLoss}}^3_{t_n}.
\end{equation}

In the policy improvement, the loss function for the actor follows \eqref{eq.supJ}, thus given by:
\begin{equation} \label{eq.ActorLoss}
  \mbox{\texttt{ActorLoss}} = \frac{1}{M} \sum_{j=1}^M \left(\Delta t\sum_{n=0}^{L-1} f(t_n,X^j_{t_n},\mathcal{N}_\pi(t_n,X^j_{t_n})) + g(X^j_T)\right),
\end{equation}
where the state process $X_t$ evolves from $t=0$ to $t=T$ following the current control $\mathcal{N}_\pi(t,x)$.

\smallskip
\noindent\textbf{Explosion prevention.}
The evolution of the state process $X_t$ follows \eqref{eq.DeltaXt_nn} and depends on the current control function $\mathcal{N}_\pi(t,x)$. Due to factors such as neural network initialization, we may encounter the so-called explosion phenomenon during the initial training steps. Namely, the values of the control function may undergo significant changes, leading to situations where the state variables become extremely large. In numerical experiments, for some initialization, we observe that when $X_t$ becomes infinite (Inf), the next time step may result in \textit{Not a Number} (NaN) due to subtracting two infinities, causing subsequent loss function and gradient computations to fail. To stabilize the training,  we here adopt the following two approaches:

\begin{itemize}
  \item Apply the \texttt{Tanh} function to the output of the network $\mathcal{N}_\pi(t,x)$ representing the control, restricting the output to the range $(-1, 1)$, and then multiply it by a trainable parameter $b$, ensuring that the network's output is restricted to the interval $(-b, b)$:     
  \begin{equation}\label{eq.explosion}
    \widetilde{\mathcal{N}}_\pi(t,x) = b*\texttt{Tanh}(\mathcal{N}_\pi(t,x)).
  \end{equation}
  \item No restriction is imposed on the network representing the control. Meanwhile, during the several initial steps of training, restrict the state $X_t$ obtained from \eqref{eq.DeltaXt_nn} to a bounded region $[a, b]$, i.e.,
  \begin{equation}\label{eq.explosion-2}
    \widetilde{X}_t = X_t\cdot\mathds{1}_{X_t\in[a,b]} + a\cdot\mathds{1}_{X_t<a} + b\cdot\mathds{1}_{X_t>b}.
  \end{equation}
\end{itemize}

Both approaches have proven to be viable in our numerical experiments, with preference depending on the problem setting. For example, when the optimal control $u^*(t,x)$ is expected to be bounded (as in Section~\ref{sec.numerical.merton}), adjusting the parameter $b$ to cover the range of $u^\ast(t,x)$ is advantageous. Conversely, in cases where the optimal control is unbounded (as in Section~\ref{sec.numerical.lqr}), the second approach may be more suitable. It's worth noting that, in practice, the approach \eqref{eq.explosion-2} is only applied in the initial training steps to prevent potential early explosions and are subsequently removed. Thus, we do not modify the problem to be solved or restrict the solution space. Furthermore, experiments have shown that these approaches still yield correct solutions without requiring delicate choices of $a$ and $b$.

\smallskip
\noindent\textbf{Summary of the algorithm.}
The pseudo code for solving the jump-diffusion stochastic control problem~\eqref{eq.dXt}--\eqref{eq.J} using actor-critic type deep reinforcement learning is presented in Algorithm \ref{algo.AC_control}. In training the critic, we employ temporal difference learning, and update parameters at each time point. Meanwhile, in training the actor, our objective is to maximize the overall reward throughout the process. We would like to point out that, in line \ref{algo.actor} of the algorithm \ref{algo.AC_control}, we update the parameter multiple times based on one \texttt{ActorLoss}, rather than repeating the process of sampling, calculating the loss, and updating parameters. This choice is made because, as observed in experiments, both approaches yield similar results, but the former requires less computation time, thus being the preferred method for training the actor. Figure \ref{fig.diagram_AC} illustrates the entire algorithm.

\begin{algorithm}
  \renewcommand{\algorithmicensure}{\textbf{Input:}}
	\renewcommand{\algorithmicrequire}{\textbf{Output:}}
  \caption{An actor-critic algorithm for stochastic control problems with jumps.} \label{algo.AC_control}
  \begin{algorithmic}[1]
    \ENSURE{The problem \eqref{eq.dXt}--\eqref{eq.J}, 3 neural networks $(\mathcal{N}_v, \mathcal{N}_\pi, \mathcal{N}_{non})$, snapshots $0=t_0<t_1<t_2<\cdots<t_L=T$, sample size $M$, iteration \# $Iterations$, actor iteration \# $actor\_step$, initial condition $X_0=\xi$, learning rates.}
    \REQUIRE{optimal value function $\mathcal{N}_v(t,x)$ and optimal control $\mathcal{N}_\pi(t,x)$.}
    \STATE{Randomly initialize the previous terminal state $\mathbb{X}=\{X^j_T\}_{j=1}^M$.}
    \FOR{$iteration=0\to Iterations$}
    \STATE{\color{blue}\emph{\%-------- update the critic --------\%}}
    \STATE\label{algo.critic_start}{Sample $M$ Brownian motion increments $\{\Delta W^j_n\}_{j=1}^M$ on $t_n\in\{t_0,t_1,\cdots,t_L\}$.} 
    \STATE{Sample jumps $\{z^j_i\}_{j=1}^M$ from the L\'{e}vy random measure $N$.} 
    \FOR{time step $n=0\to L-1$}
    \STATE{Update the state process from $X^j_{t_n}$ to $X^j_{t_{n+1}}$ by \eqref{eq.DeltaXt_nn}, $j=1,2,\cdots,M$.}
    \STATE{Compute $\mbox{\texttt{CriticLoss}}^1_{t_n}$ by \eqref{eq.CriticLoss1} and $\mbox{\texttt{CriticLoss}}^3_{t_n}$ by \eqref{eq.CriticLoss3}.}
    \STATE{Update the previous terminal state $\mathbb{X}=\{X^j_T\}_{j=1}^M$ if $n=L-1$.}
    \STATE{Compute $\mbox{\texttt{CriticLoss}}^2_{t_n}$ by \eqref{eq.CriticLoss2} and the current $\mathbb{X}$.} 
    \STATE{Compute $\mbox{\texttt{CriticLoss}}_{t_n}$ by \eqref{eq.CriticLoss}. Update $\mathcal{N}_v, \mathcal{N}_{non}$ by minimizing $\mbox{\texttt{CriticLoss}}_{t_n}$.}
    \STATE{Decay the learning rate of the critic if necessary.}
    \ENDFOR\label{algo.critic_end}
    \STATE{\color{blue}\emph{\%-------- update the actor --------\%}}
    \STATE\label{algo.actor_start}{Sample $M$ Brownian motion increments $\{\Delta W^j_n\}_{j=1}^M$ on $t_n\in \{t_0, t_1, \cdots, t_L\}$.} 
    \STATE{Sample jumps $\{z^j_i\}_{j=1}^M$ from the L\'{e}vy random measure $N$.}
    \STATE{Simulate the state process from $X^j_0$ to $X^j_T$ by \eqref{eq.DeltaXt_nn}, $j=1,2,\cdots,M$.}
    \STATE{Compute \texttt{ActorLoss} by \eqref{eq.ActorLoss}.} 
    \STATE\label{algo.actor}{Update $\mathcal{N}_\pi$ $actor\_step$ times by maximizing \texttt{ActorLoss}.}
    \STATE\label{algo.actor_end}{Decay the learning rate of actor if necessary.}
    \STATE{\color{blue}\emph{\%-------- Model evaluation --------\%}}
    \STATE{Compute the error of $\mathcal{N}_v$ and $\mathcal{N}_\pi$.}
    \ENDFOR
  \end{algorithmic}
\end{algorithm}

\begin{figure}[htbp]
  \centering
  \includegraphics[width=40em]{./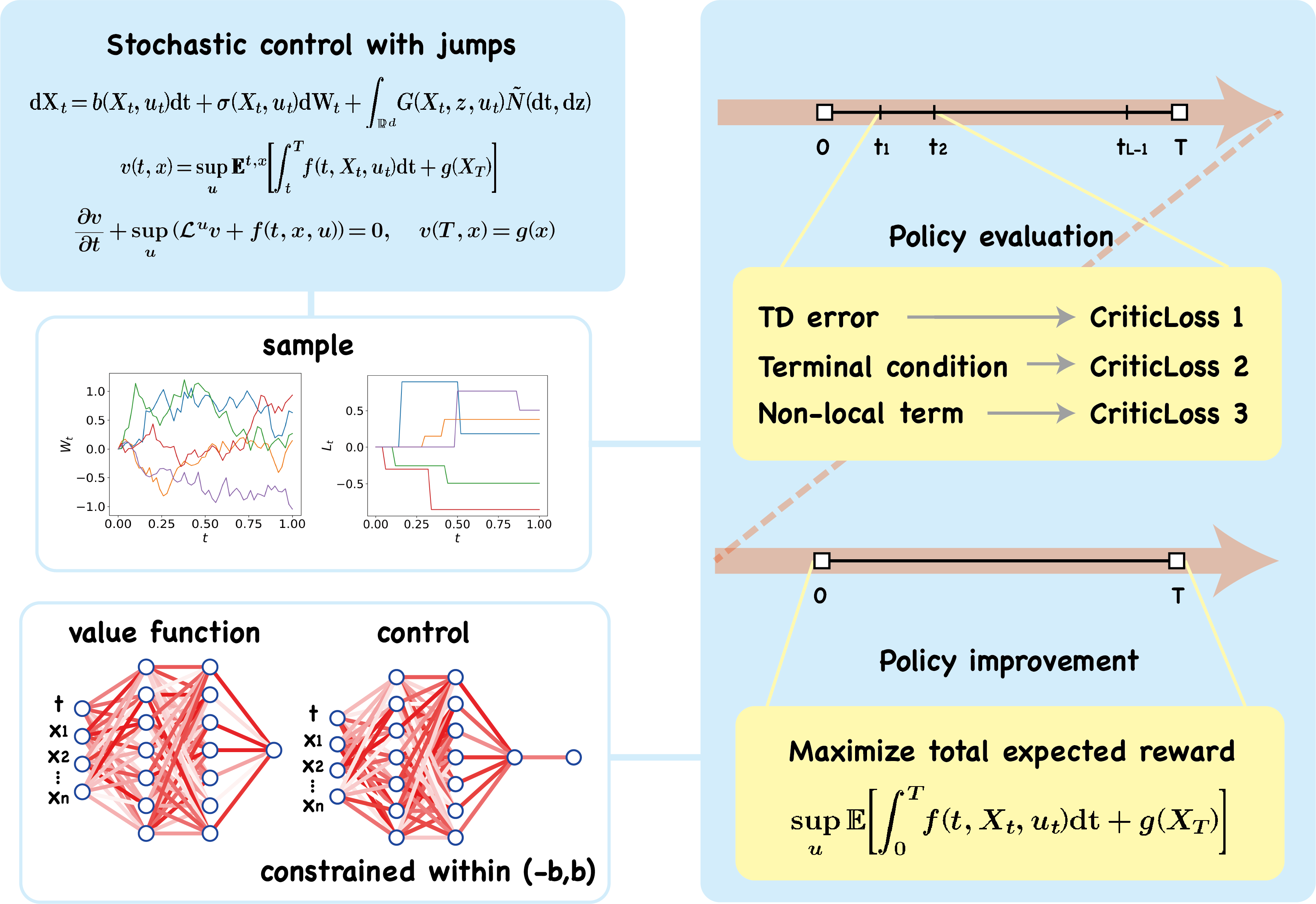}
  \caption{The diagram of solving stochastic control problems with jumps using deep reinforcement learning.} \label{fig.diagram_AC}
\end{figure}

\subsubsection{The simplified algorithm for problems subject to Poisson jumps}\label{sec.poisson}

This section considers the controlled state process $X_t$ subject to Poisson jumps  
\begin{equation} \label{eq.dXt_Poisson}
  \dif X_t = b(X_{t-}, u_t)\dif t + \sigma(X_{t-}, u_t)\dif W_t + \sum_{k=1}^m z_k(X_{t-},u_t)\dif M_t^k,
\end{equation}
where $z_k(x,u): \R^d\times\R^{d_c}\to\R^d$, $N_t^k$ is a Poisson process with intensity $\lambda_k$ and  $\{N_t^k\}_{k=1}^m$ are independent, and $M_t^k=N_t^k-\lambda_k t$ is the corresponding compensated Poisson process. It is a special case of \eqref{eq.dXt} where $\nu(\dif z)$ is a discrete measure. 
Such models are simpler and typically offer better interpretability and traceability, and have been widely used in modeling stock prices for study portfolio optimization and option pricing, for instance, in \cite{guo2004optimal,aase1984optimum}. In such cases, the proposed deep RL framework can be significantly simplified, and this section will highlight these difference. 

The definition of  $J^u(t,x)$ remains as in \eqref{eq.J}. The optimal value function 
$
  v(t,x) = \sup_{u\in U} J^u(t,x)
$
satisfies the same Hamilton-Jacobi-Bellman equation~\eqref{eq.HJB} albeit with a different generator: 
\begin{equation} \label{eq.generator_Poisson}
  \begin{aligned}
    \mathcal{L}^u v(t,x) = &\ b(x,u)\cdot\nabla v(t,x) + \frac{1}{2}\mbox{Tr}\left[\sigma(x,u)\sigma^T(x,u)\mbox{H}(v(t,x))\right] \\ 
    & + \sum_{k=1}^m \lambda_k\left(v(t,x+z_k(x,u))-v(t,x)-z_k(x,u)\cdot\nabla v(t,x)\right). \\
  \end{aligned}
\end{equation}
Applying the L\'{e}vy-type It\^{o} formula to $J^u(t,X_t)$ brings: 
\begin{equation}
  \begin{aligned}
    \dif J^u(t,X_t) = & -f(t,X_{t-},u(t,X_{t-}))\dif t + \left(\sigma(X_{t-},u(t,X_{t-}))^T \nabla J^u(t,X_{t-})\right)^T\dif W_t \\ 
    & + \sum_{k=1}^m \left[J^u(t, X_{t-}+G(X_{t-},z,u(t,X_{t-}))) - J^u(t,X_{t-},u(t,X_{t-}))\right] \dif M_t^k. \\ 
  \end{aligned}
\end{equation}
Moreover, the reward $\widetilde R$ is shifted to the following expression
\begin{equation}\label{eq.Rtilde.poisson}
  \begin{aligned}
    \widetilde{R}^j_{t_nt_{n+1}} = &\ f(t_n,X^j_{t_n},\mathcal{N}_\pi(t_n,X^j_{t_n}))\Delta t - \left(\sigma(X^j_{t_n},\mathcal{N}_\pi(t_n,X^j_{t_n}))^T \nabla \mathcal{N}_v(t_n,X^j_{t_n})\right)^T\Delta W_n  \\ 
    - & \sum_{k=1}^m\left( \mathcal{N}_v(t_n, X^j_{t_n}+G(X^j_{t_n},z^j_i,\mathcal{N}_\pi(t_n,X^j_{t_n})))-\mathcal{N}_v(t_n,X^j_{t_n}) \right) \Delta M_t^k.
  \end{aligned}
\end{equation}
Here $X_{t_n}$ is given by the Euler scheme of \eqref{eq.dXt_Poisson}, the sampling of $\Delta M_t^k$ is given by $\Delta M_t^k=k_i-\lambda_k\Delta t$ if the Poisson process 
$N_t^k$ experiences $k_i$ jumps ($k_i\geq1$) within the interval $[t_n, t_n+\Delta t]$; and  $\Delta M_t^k=-\lambda_k\Delta t$ otherwise.

Then, $\mbox{\texttt{CriticLoss}}^1_{t_n}$ (cf. \eqref{eq.CriticLoss1}) can be calculated based on~\eqref{eq.Rtilde.poisson}. The calculations of $\mbox{\texttt{CriticLoss}}^2_{t_n}$ and \texttt{ActorLoss} (cf. \eqref{eq.CriticLoss2} and \eqref{eq.ActorLoss}) are the same as in the general case. The term $\mbox{\texttt{CriticLoss}}^3_{t_n}$ becomes unnecessary. This is because, in the compound Poisson case, the non-local term in the HJB equation \eqref{eq.generator_Poisson} as well as in \eqref{eq.Rtilde.poisson} requires evaluating only a finite number of surrounding points, which already has an explicit form. Consequently, there is no need to introduce an additional neural network $\mathcal{N}_{non}$ for the purpose of efficiently evaluating the non-local term, and hence no need for $\mbox{\texttt{CriticLoss}}^3_{t_n}$ to  ensure the consistency between $\mathcal{N}_{non}$ and the neural network for the critic $\mathcal{N}_{v}$.


\subsection{Solving games using fictitious play and parallel computing}\label{sec.dlgame}

We extend the previously proposed framework to address Nash equilibrium in stochastic differential games within the It\^o-L\'evy model, leveraging the idea of fictitious play (FP) and parallel computing. Originally introduced in seminal works by Brown \cite{brown1949some,brown1951iterative}, FP is a learning process aimed at identifying Nash equilibrium. In FP, the game undergoes repeated resolution, wherein each agent iteratively optimizes their own payoff while assuming that other agents adhere to strategies from prior iterations, with the hope that the strategies obtained will converge to a Nash equilibrium. 
This approach transforms the problem into a series of $n$ decoupled control problems, amenable to the application of Algorithm~\ref{algo.AC_control}. Given the decoupled nature of the optimization problems in each iteration, parallel computing facilitates acceleration.

We first present the algorithm designed for stochastic differential games under general L\'{e}vy-It\^{o} processes. Let the state process $X_t=(X_t^1,X_t^2,\cdots,X_t^n)^T$ be accessible by all the agents and satisfies
\begin{equation}\label{eq.dXt.game}
  \dif X_t = b(X_{t-}, \pi_{t})\dif t + \sigma(X_{t-}, \pi_{t})\dif W_t + \int_{\R^d} G(X_{t-},z,\pi_{t}) \widetilde{N}(\dif t,\dif z).
\end{equation}
To simplify the notation, we consider the agent $i$'s state process $X_t^i\in\R$, the control vector of all agents $\pi_t=(\pi_t^1,\pi_t^2,\cdots,\pi_t^n)\in\R^n$, $b: \R^n\times\R^n\to\R^n$, $\sigma: \R^n\times\R^n\to\R^{n\times n}$, $W_t$ is a $n$-dimensional Brownian motion, $G: \R^n\times\R^n\times\R^n\to\R^n$, and $\widetilde{N}(\dif t,\dif z)$ is a $n$-dimensional compensated Poisson random measure, $i\in\mathcal{I}$. The $i$-th agent's goal is to maximize her expected utility of running and terminal reward:
\begin{equation}\label{eq.v.game}
    v^i(t,x) \equiv \sup_{\pi^i}J_i^{\pi}(t,x) \equiv \sup_{\pi^i}\E^{t,x}\left[\int_{t}^{T} f_i(s, X_s,\pi_s)\dif s+g_i(X_T)\right], \quad x\in\R^n.
\end{equation}
where $f_i: \R \times \R^n\times\R^n\to\R$ and $g_i: \R^n\to\R$.
The value functions $\{v^i(t,x)\}_{i \in \mathcal{I}}$ satisfies the coupled HJB system:
\begin{equation}\label{eq.HJB.game}
    \left\{
    \begin{aligned}
        & \frac{\partial v^i}{\partial t}(t,x) + \sup_{\pi^i} \bigg[ b(x,\pi)\cdot\nabla v^i(t,x) + \frac{1}{2}\mbox{Tr}\big[\sigma(x,\pi)\sigma^T(x,\pi)\mbox{H}(v^i(t,x))\big] \\ 
        & \quad + \int_{\R^n} \big(v^i(t,x+G(x,z,\pi))-v^i(t,x)-G(x,z,\pi)\cdot\nabla v^i(t,x)\big)\nu(\dif z) + f_i(t,x,\pi) \bigg] = 0, \\
        & v^i(T,x)=g_i(X_T).
    \end{aligned}
    \right.
\end{equation}

We propose to introduce $3n$ neural networks $\mathcal{N}_v^i(t,x_1,\cdots,x_n), \mathcal{N}_\pi^i(t,x_1,\cdots,x_n), \\
\mathcal{N}_{non}^i(t,x_1,\cdots,x_n)$, to approximate the controls $\pi_t^i$, value functions $v^i$ and non-local terms in \eqref{eq.HJB.game}, for all agents $i \in \mathcal{I}$, which depend on time $t$ and the current state $(x_1,\cdots,x_n)$ of all agents. Naturally, we add an additional loop outside Algorithm \ref{algo.AC_control}, which corresponds to the $n$ tasks for all agents. Specifically, when the loop iterates over the $i$-th agent, we fix the control functions of other agents $\{\mathcal{N}_\pi^k\}_{k\neq i}$, and train $\mathcal{N}_v^i$ and $\mathcal{N}_{non}^i$ using the ``update the critic'' step (line \ref{algo.critic_start}-\ref{algo.critic_end}) and train $\mathcal{N}_\pi^i$ using the ``update the actor'' step (line \ref{algo.actor_start}-\ref{algo.actor_end}) in Algorithm \ref{algo.AC_control}. The value functions of other agents $\{\mathcal{N}_v^k\}_{k\neq i}$ do not enter the computation graph for the \texttt{CriticLoss} and \texttt{ActorLoss} of agent $i$, thus can be disregarded. Finally, an outer loop is added, corresponding to FP, to ensure the convergence of the control $\mathcal{N}_\pi^i$ and value functions $\mathcal{N}_v^i$ and non-local terms $\mathcal{N}_{non}^i$ for all agents $i \in \mathcal{I}$ together.

The aforementioned framework effectively addresses the game with the desired accuracy. However, the computational cost has escalated substantially due to the incorporation of two additional loops. Inspired by Asynchronous Advantage Actor-Critic method (A3C) \cite{zhang2020deep} in deep reinforcement learning, we have adapted the agent loop in the algorithm that requires sequential computations for each agent to parallel computation, as depicted in Algorithm \ref{algo.game}. To this end, we
\begin{itemize}
  \item Firstly, define $3n$ global networks $\mathcal{N}_v^i,\mathcal{N}_\pi^i,\mathcal{N}_{non}^i, i \in \mathcal{I}$ to approximate the value function, the control and the non-local term, respectively. 
  \item Then, before updating the control and value functions for the $i$-th player, define $3n$ local networks $temp\_\mathcal{N}_v^k, temp\_\mathcal{N}_\pi^k, temp\_\mathcal{N}_{non}^k$, $k \in \mathcal{I}$ whose parameters are copied from the global networks. All subsequent computations of actor and critic losses for agent $i$ are executed through these local networks. 
  \item Finally, update the parameters of the local networks $temp\_\mathcal{N}_v^i, temp\_\mathcal{N}_\pi^i, temp\_\mathcal{N}_{non}^i$ and transfer them back to the global networks $\mathcal{N}_v^i,\mathcal{N}_\pi^i,\mathcal{N}_{non}^i$.
\end{itemize}
Now, the training of agents $i$ and $j (j\neq i)$ can proceed independently, without affecting each other. Consequently, parallel computation can be employed, leading to a significant reduction in computational costs. Figure \ref{fig.diagram_parallel} illustrates the parallel computing process.

\begin{algorithm}
  \renewcommand{\algorithmicensure}{\textbf{Input:}}
	\renewcommand{\algorithmicrequire}{\textbf{Output:}}
  \caption{A parallel computing actor-critic framework for solving games with jumps} \label{algo.game}
  \begin{algorithmic}[1]
    \ENSURE{Number of agents $n$, the problem \eqref{eq.dXt.game}--\eqref{eq.v.game}, $3n$ neural networks $(\mathcal{N}_v^i, \mathcal{N}_\pi^i,\mathcal{N}_{non}^i)$, $i \in \mathcal{I}$, snapshots $0=t_0<t_1<\cdots<t_L=T$, sample size $M$, outer iteration \# $Iter\_out$, iteration \# of each agent $Iter\_inner$, actor iteration \# $actor\_step$, initial condition $X_0^i=x_0^i$, learning rates. 
    }
    \REQUIRE{Nash equilibrium $\mathcal{N}_\pi^i(t,x_1,\cdots,x_n)$ and the optimal value function $\mathcal{N}_v^i(t,x_1,\cdots,x_n)$ for each agent $i \in \mathcal{I}$.}
    \STATE{Initialize the global networks $(\mathcal{N}_v^i, \mathcal{N}_\pi^i, \mathcal{N}_{non}^i)$, $i \in \mathcal{I}$.}
    \FOR{$iteration=0\to Iter\_out$}
    \STATE{\color{blue}\emph{\% --------- parallel computing among all agents --------- \%}}
    \FOR{agent $i=1,2,\cdots,n$}
    \STATE{Define $3n$ local networks $(temp\_\mathcal{N}_v^k, temp\_\mathcal{N}_\pi^k, temp\_\mathcal{N}_{non}^k)$, $k \in \mathcal{I}$.}
    \STATE{Load the parameters from the global networks $(\mathcal{N}_v^k, \mathcal{N}_\pi^k,\mathcal{N}_{non}^k)$ into the local networks $(temp\_\mathcal{N}_v^k, temp\_\mathcal{N}_\pi^k, temp\_\mathcal{N}_{non}^k)$, $\forall k \in \mathcal{I}$.}
    \STATE{Fix $(temp\_\mathcal{N}_v^k$, $temp\_\mathcal{N}_\pi^k$, $temp\_\mathcal{N}_{non}^k)$, $k\neq i$, and update $(temp\_\mathcal{N}_v^i$, $temp\_\mathcal{N}_\pi^i$, $temp\_\mathcal{N}_{non}^i)$ based on Algorithm \ref{algo.AC_control} $Iter\_inner$ times.}
    \STATE{Load the parameters from the local networks $(temp\_\mathcal{N}_v^i, temp\_\mathcal{N}_\pi^i, temp\_\mathcal{N}_{non}^i)$ into the global networks $(\mathcal{N}_v^i, \mathcal{N}_\pi^i, \mathcal{N}_{non}^i)$.}
    \ENDFOR
    \STATE{Wait for all agents to finish their computations.}
    \STATE{Evaluate the accuracy of $\mathcal{N}_v^i$ and $\mathcal{N}_\pi^i$ for all $i \in \mathcal{I}$.}
    \ENDFOR
  \end{algorithmic}
\end{algorithm}

\begin{figure}[htbp]
  \centering
  \includegraphics[width=25em]{./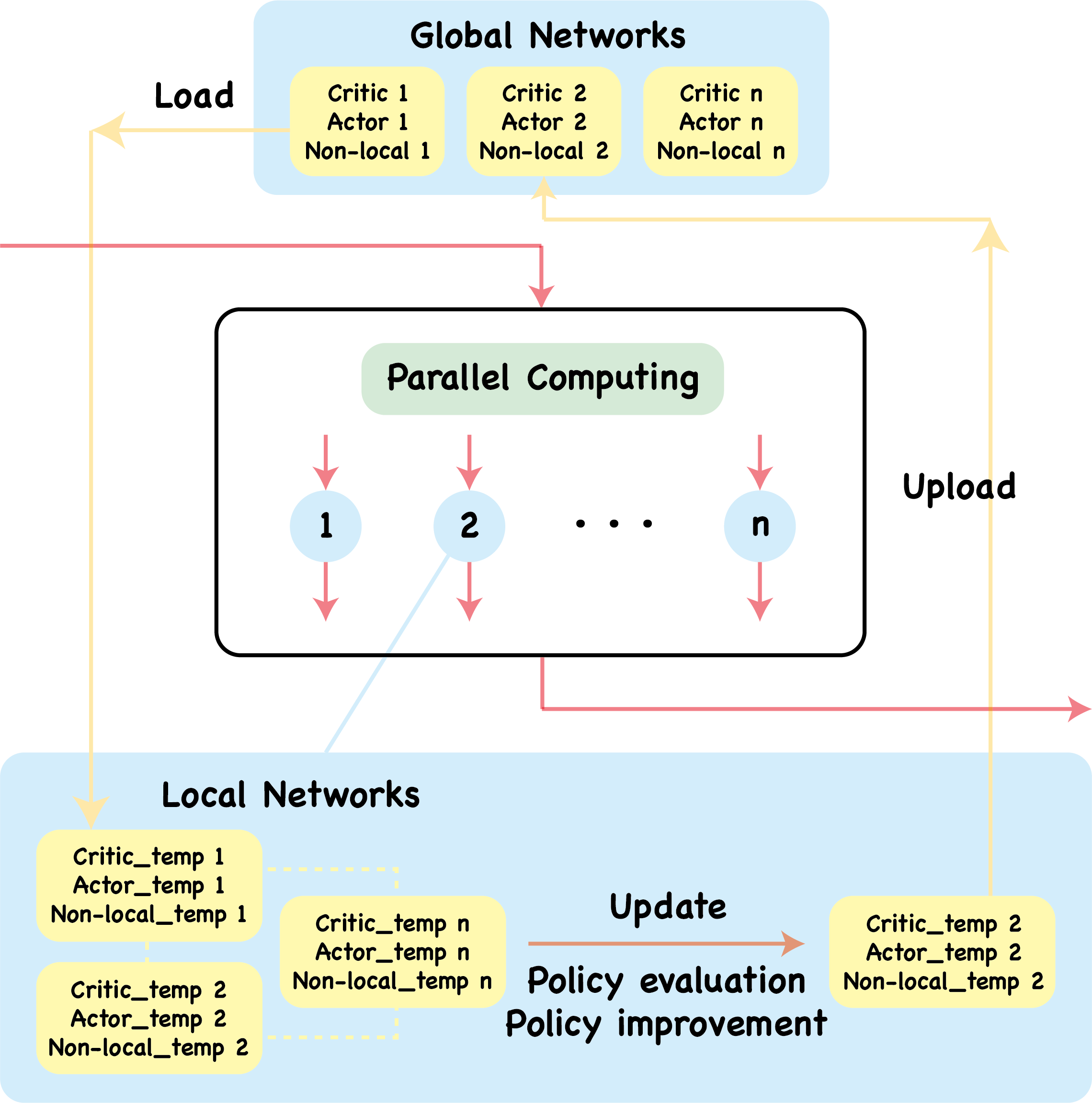}
  \caption{The illustration for solving multi-agent games with jumps using parallel computing. In each iteration of the outer loop, $n$ agents simultaneously perform computations. Each agent instantiates local networks for the critic, actor, and non-local terms for subsequent computation and then uploads the network parameters back to the global networks.} \label{fig.diagram_parallel}
\end{figure}

\medskip
\noindent\textbf{The simplified algorithm for games analyzed in Section~\ref{sec.game}.}
For the portfolio game discussed in Section \ref{sec.game}, the parallel computing framework can be simplified, much like the simplification performed in Section~\ref{sec.poisson} for control problems. We parameterize the control and the value function as functions of the states of all agents $(t,x_1,\cdots,x_n)$, despite the semi-explicit solution derived in Section~\ref{sec.game} that depends solely on $(t, x_i, y_i)$ (where $y_i$ represents the sum (or product) of other agents' states).  

As for the \texttt{CriticLoss}, the reward $\widetilde R_{t_nt_{n+1}}$ in $\mbox{\texttt{CriticLoss}}_{t_n}^1$ is modified to the following 
\begin{equation*}
  \begin{aligned}
    & \widetilde{R}_{t_nt_{n+1}} = -\sum_{k=1}^n \frac{\partial v^i}{\partial x_k} \pi_t^k(\nu_k\Delta W_n^k+\sigma_k\Delta B_n) \\ 
    & + \sum_{k=1}^n \left(v^i({t_n},X_{t_n}^1,\cdots,X_{t_n}^{k-1},X_{t_n}^k+\pi_{t_n}^k\alpha_k,X_{t_n}^{k+1},\cdots,X_{t_n}^n)-v^i({t_n},X_{t_n}^1,\cdots,X_{t_n}^n)\right)\Delta M_n^k \\ 
    & + \left(v^i({t_n},X_{t_n}^1+\pi_{t_n}^1\beta_1,\cdots,X_{t_n}^n+\pi_{t_n}^n\beta_n)-v^i({t_n},X_{t_n}^1,\cdots,X_{t_n}^n)\right)\Delta M_n^0,
  \end{aligned}
\end{equation*}
obtained by applying It\^o formula to $v^i(t,X_t^1,\cdots,X_t^n)$, and state dynamics \eqref{eq.dXt_exp}.
The neural networks representing the non-local term $\mathcal{N}_{non}^i$ are no longer necessary, nor is the $\mbox{\texttt{CriticLoss}}_{t_n}^3$.
There is no change in the $\mbox{\texttt{CriticLoss}}_{t_n}^2$ and the computation of the \texttt{ActorLoss}.


\section{Numerical experiments} \label{sec.numerical}

This section presents several numerical examples of stochastic control problems and games to demonstrate the effectiveness of the actor-critic framework proposed earlier, including Merton's problem, linear quadratic regulator, and the portfolio game analyzed in Section~\ref{sec.game}. The algorithm is implemented using the machine learning library \texttt{Pytorch}, and the code is available on GitHub repository\footnote{https://github.com/LiWeiLu2016/Multi-agent-game-with-jumps}.
Therefore, the results presented here can be reproduced and further developed.

\smallskip
\noindent\textbf{Hyperparameter choices and the computational environment. }
The time interval $0=t_0<t_1<\cdots<t_L=T$ is discretized equally with $T=1$ and $L=50$. The batch size, referred to as the number of trajectories for each update, is set to $M=500$. The neural network adopts the ResNet structure illustrated in Figure \ref{fig.ResNet} with three residual blocks, each having a width of $d+10$, where $d$ is the dimension of the spatial variable $x$. The \texttt{Tanh} activation function is employed, and the Adam optimizer is used for updating network parameters. For the stochastic control problem using Algorithm~\ref{algo.AC_control}, the number of iterations is set to $Iterations=1000$, with $actor\_step=10$. For multi-agent games using Algorithm~\ref{algo.game}, the outer (FP) loop $Iter\_out=100$, the inner loop $Iter\_inner=100$, and $actor\_step=10$. The learning rate is set to $1e-3$ for the first 60\% of the total iterations, $1e-4$ for the next 20\%, and $1e-5$ for the final 20\%. 

These choices of neural network architectures do not necessitate prior knowledge of any explicit solution structure, and no pre-training is performed. All experiments use the same random seed 2023. The experiments were conducted on a MacBook Pro with 32GB of memory and a 10-core M1 Pro chip.

\smallskip
\noindent\textbf{Metrics for evaluation.}
For stochastic control problems, the error in the value function $v$ and the control $u^\ast$ are defined as follows 
\begin{gather}\label{def.error.value}
  \mbox{\texttt{Error\_value}} = \sum_{n=0}^{L-1} \Delta t \sqrt{\frac{\sum_{j=1}^M \left(\hat{v}(t_n, X_{t_n}^j)-v(t_n, X_{t_n}^j)\right)^2}{\sum_{j=1}^M v^2(t_n, X_{t_n}^j)}}, \\ 
  \mbox{\texttt{Error\_control}} = \sum_{n=0}^{L-1} \Delta t \sqrt{\frac{\sum_{j=1}^M \left(\hat{u}(t_n, X_{t_n}^j)-u^\ast(t_n, X_{t_n}^j)\right)^2}{\sum_{j=1}^M (u^\ast)^2(t_n, X_{t_n}^j)}}, \label{def.error.control}
\end{gather}
where $\hat{v}$ and $\hat u$ represent their respective approximations, and $X^j_{t_n}$ denotes the value of the $j$-th trajectory at time $t_n$. In the context of multi-agent games, the error is defined as the average of the errors in value functions/controls across all agents
\begin{gather}\label{def.error.value.game}
  \mbox{\texttt{Error\_value\_game}} = \frac{1}{n} \sum_{i=1}^n \mbox{\texttt{Error\_value\_agent}}_i, \\ 
  \mbox{\texttt{Error\_control\_game}} = \frac{1}{n} \sum_{i=1}^n \mbox{\texttt{Error\_control\_agent}}_i.\label{def.error.control.game}
\end{gather}

\subsection{Merton's problem under a jump-diffusion model}\label{sec.numerical.merton}
This is the single-agent version of the game analyzed in Section~\ref{sec.game}. For completeness, we will repeat the problem setup below. 

Consider a financial market with two assets: a risk-free asset $S^0_t$ earning interest at a rate $r$, and a stock $S^1_t$ subject to Brownian and Poisson noises \cite{oksendal2019stochastic}: 
\begin{align*}
  &\dif S_t^0 = r S_t^0 \dif t, \\ 
  &\dif S_t^1 = S_{t-}^1(\mu \dif t+\sigma\dif B_t+z\dif M_t),
\end{align*}
where $\mu>r$, $z>-1$, $M_t=N_t-\lambda t$ and $N_t$ is a Poisson process with intensity $\lambda$, and $B_t$ is a standard Brownian motion. A person with an initial wealth $x_0$ forms an investment portfolio. At time $t$, she invests a proportion $u_t$ of her wealth in the stock, while the remaining wealth is invested in the risk-free asset. Assuming $u_t$ is self-financing, her wealth process $X_t$ satisfies:
\begin{equation*}
  \dif X_t = \frac{u_tX_{t-}}{S_{t-}^1}\dif S_t^1 + \frac{(1-u_t)X_{t-}}{S_t^0} \dif S_t^0 = \left(r+u_t(\mu-r)\right)X_{t-}\dif t+\sigma u_t X_{t-}\dif B_t + z u_t X_{t-}\dif M_t.
\end{equation*}
The goal is to determine the process $u_t$ to maximize the expected utility at the terminal time:
\begin{equation*}
  v(t,x) = \sup_{u} \E^{t,x} [g(X_T)],
\end{equation*}
where the utility function $g(x)=x^p/p$ with $p \in (0,1)$, or $g(x)=\log x$. Via dynamic programming, the optimal value function $v(t,x)$ satisfies the following HJB equation
\begin{equation*}
  \left\{
  \begin{aligned}
    & \partial_t v + \sup_{u} \Big[((r+u(\mu-r))x \partial_x v + \frac{1}{2}\sigma^2 u^2 x^2 \partial_{xx} v \\
    & \qquad \qquad \qquad + \lambda\left(v(t,x(1+zu))-v(t,x)-zux \partial_x v(t,x) \right) \Big] = 0, \\ 
    & v(T,x) = g(x).
  \end{aligned}
  \right.
\end{equation*}

In the case of $g(x)=\log x$, the problem has an analytical solution of the form $$v(t,x) = k(T-t) + \log x,$$
with $k = r+u^*(\mu-r) - \frac{1}{2}\sigma^2(u^*)^2 + \lambda(\log(1+zu^*)-zu^*)$ and the optimal control is given by: 
$$
u^* = \frac{1}{2\sigma^2z}\left(-(\lambda z^2-(\mu-r)z+\sigma^2)+\sqrt{(\lambda z^2-(\mu-r)z+\sigma^2)^2+4\sigma^2z(\mu-r)}\right).
$$

When $g(x) = \frac{1}{p}x^p$, the solution is semi-explicit and takes the form:
\begin{equation*}
    v(t,x) = e^{k(T-t)}\cdot \frac{1}{p}x^p,
\end{equation*}
with $k = p(r+u^*(\mu-r)) - \frac{1}{2}\sigma^2(u^*)^2p(1-p) + \lambda((1+zu^*)^{p}-1-pzu^*)$. Note that $k$ depends on the optimal control $u^\ast$, which solves the algebraic equation
\begin{equation}\label{eq.merton_optimal_control}
    \mu-r-(1-p)\sigma^2u^*+\lambda z\left((1+zu^*)^{p-1}-1\right)=0, \quad g(x)=\frac{1}{p}x^p.
\end{equation}

In this example, we set the model parameters as $\mu=0.05$, $r=0.03$, $\sigma=0.4$, $p=0.5$, $\lambda=0.3$, $z=0.2$ and $x_0=10$. The output of the neural network representing the control $\mathcal{N}_\pi$ is processed by the \texttt{Tanh} function and then multiplied by a learnable parameter $b$, as described in subsection \ref{sec.implementation_details}. The initial value of $b$ is set to 1. The benchmark under logarithmic utility is fully explicit, while, to obtain a reference solution in the case of power utility, the Python function \texttt{scipy.optimize.fsolve}  is used to solve equation~\eqref{eq.merton_optimal_control}.

Table \ref{tab.merton_main} displays the errors in the value functions and control obtained using two \texttt{ActorLoss} formulations,  $J$ and  $\widetilde{J}$ (cf. \eqref{eq.supJ}--\eqref{eq.J_tilde}), under power and logarithmic utility. It can be observed that the accuracy is slightly better when using \texttt{ActorLoss} with the added martingale term $\widetilde{J}$, but the difference is not significant. 
Therefore, we choose to use the original \texttt{ActorLoss} $J$ defined in \eqref{eq.supJ} in subsequent experiments for a easier implementation.
In Figure \ref{fig.merton_traj}, panels~(a)--(c) visualize several trajectories of the value function $v$ and the control $u$ and their approximated counterparts along the simulated state process $X_t$, and state process $X_t$ under power utility and \texttt{ActorLoss} $J$. It can be seen that they closely match the exact trajectories. Panels~(d) and(e) depicts how the losses and errors, defined in \eqref{def.error.value}--\eqref{def.error.control}, change with training iterations. Panel~(f) illustrates the relative error at each point between the learned control and the exact solution, indicating that the learned control function remains consistently small over time.

\begin{table}[!htb]
  \centering
  \caption{The errors of the value function $v$ and the control $u^\ast$ of Merton's problem with jumps under different utilities, trained with different actor losses $J$ or $\widetilde J$.} \label{tab.merton_main}
  \begin{tabular}{l|cc|cc}
    \toprule
    Utility & \multicolumn{2}{c}{$g(x)=x^p/p$} & \multicolumn{2}{|c}{$g(x)=\log x$} \\ 
    \midrule
    \texttt{ActorLoss} & $J$ & $\widetilde{J}$ & $J$ & $\widetilde{J}$ \\
    \midrule
    \texttt{Error\_value} & 0.013\% & 0.012\% & 0.010\% & 0.017\% \\ 
    \texttt{Error\_control} & 1.227\% & 0.555\% & 4.718\% & 3.386\% \\
    \midrule
    Time (min) & 9.6 & 10.3 & 10.8 & 10.3 \\
    \bottomrule
  \end{tabular}
\end{table}

\begin{figure}[!htb]
  \centering 
  \subfloat[The value function]{\includegraphics[width=13.5em,height=11em]{./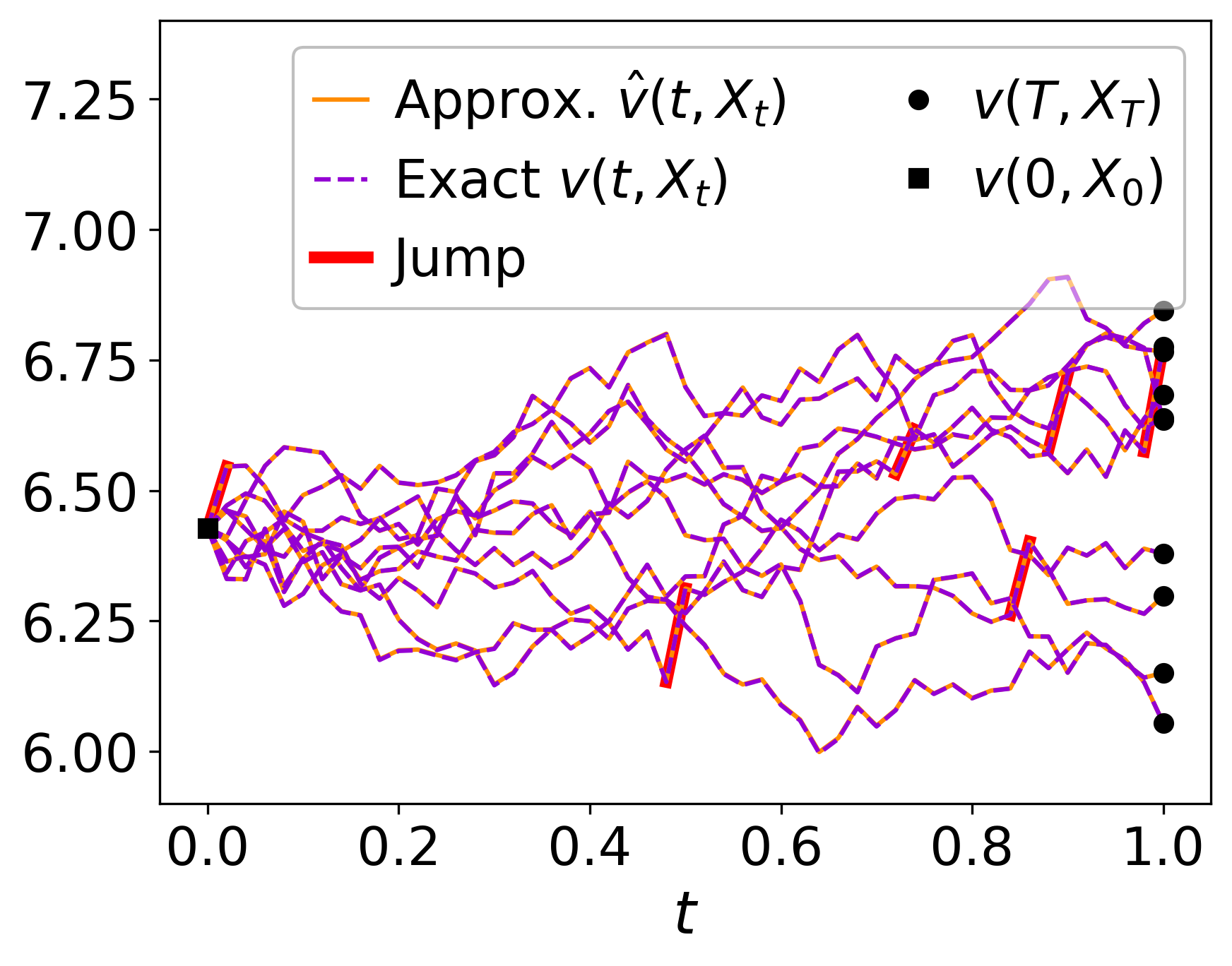}} 
  \subfloat[The control process]{\includegraphics[width=13.5em,height=11em]{./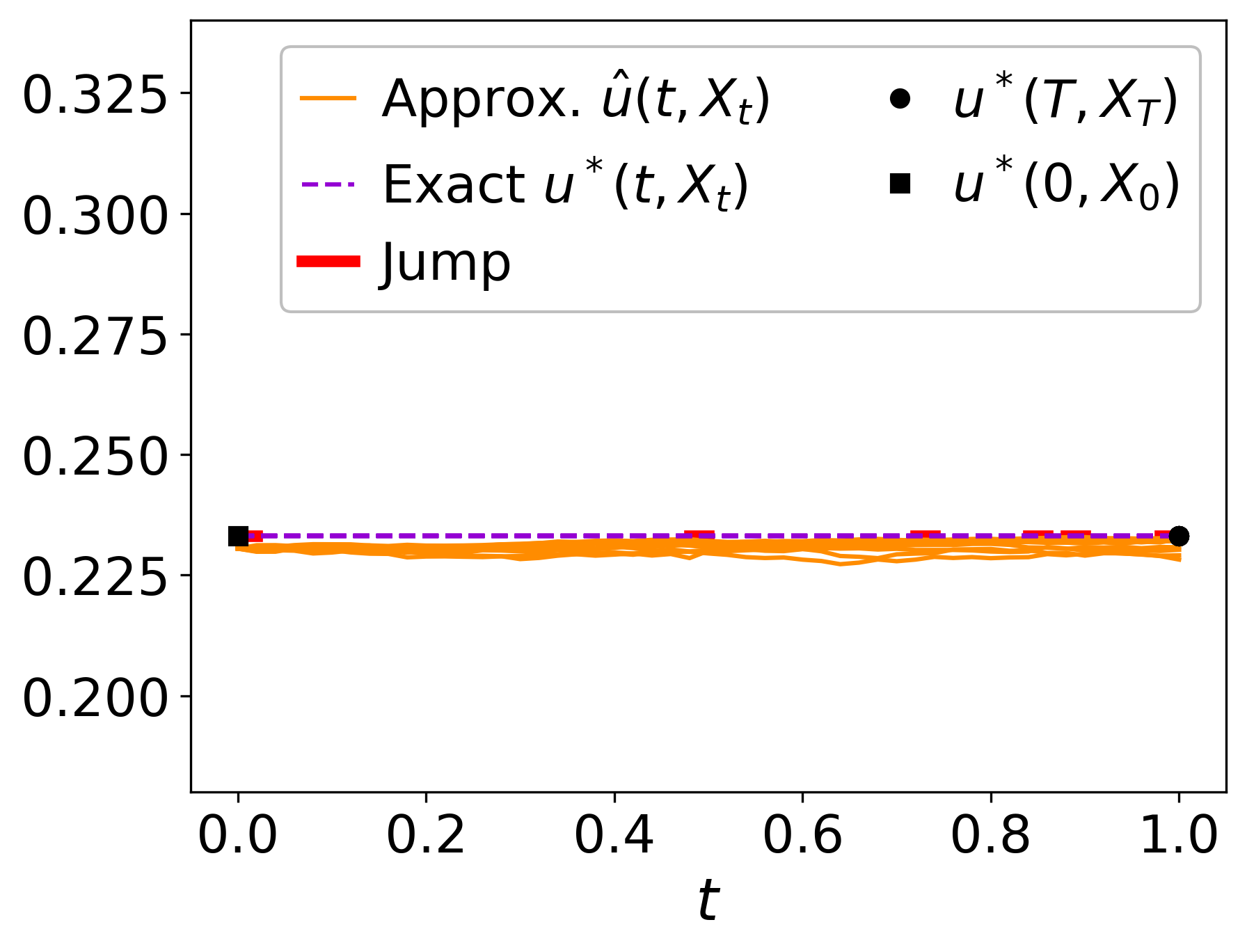}} 
  \subfloat[The state process $X_t$]{\includegraphics[width=13.5em,height=11.25em]{./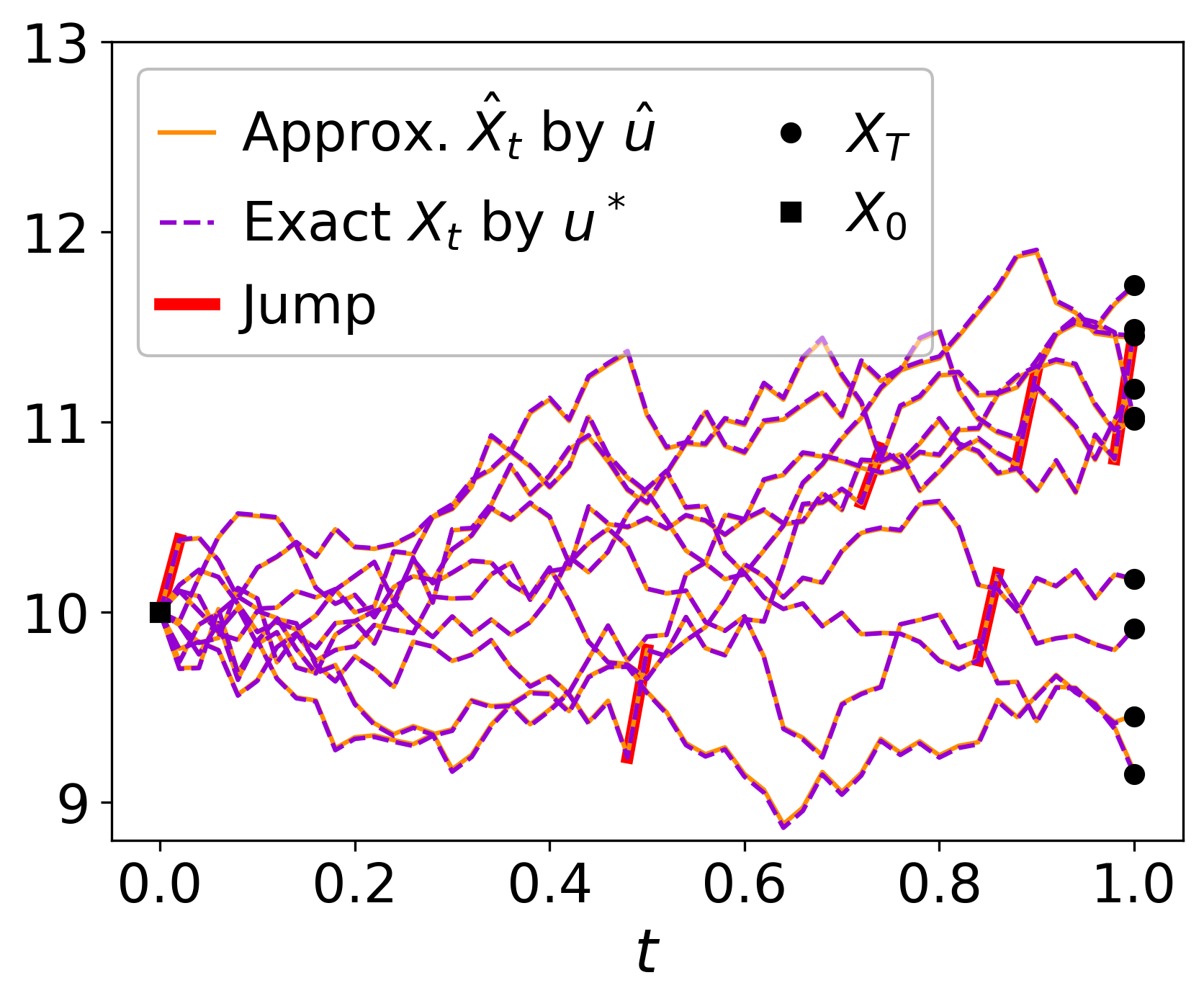}} 

  \subfloat[Losses during the training]{\includegraphics[width=13.5em,height=10.2em]{./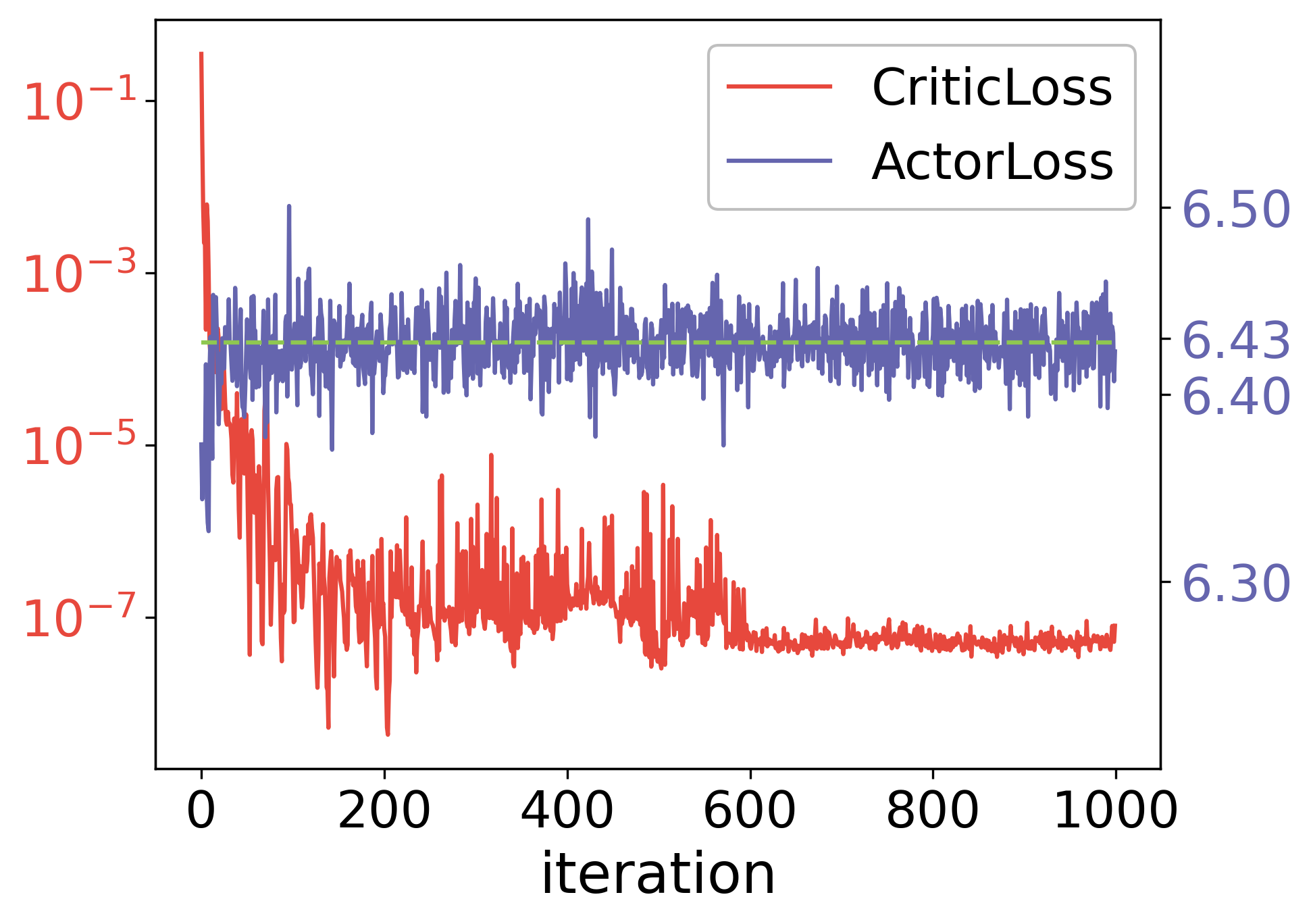}}
  \subfloat[Errors during the training]{\includegraphics[width=13.5em]{./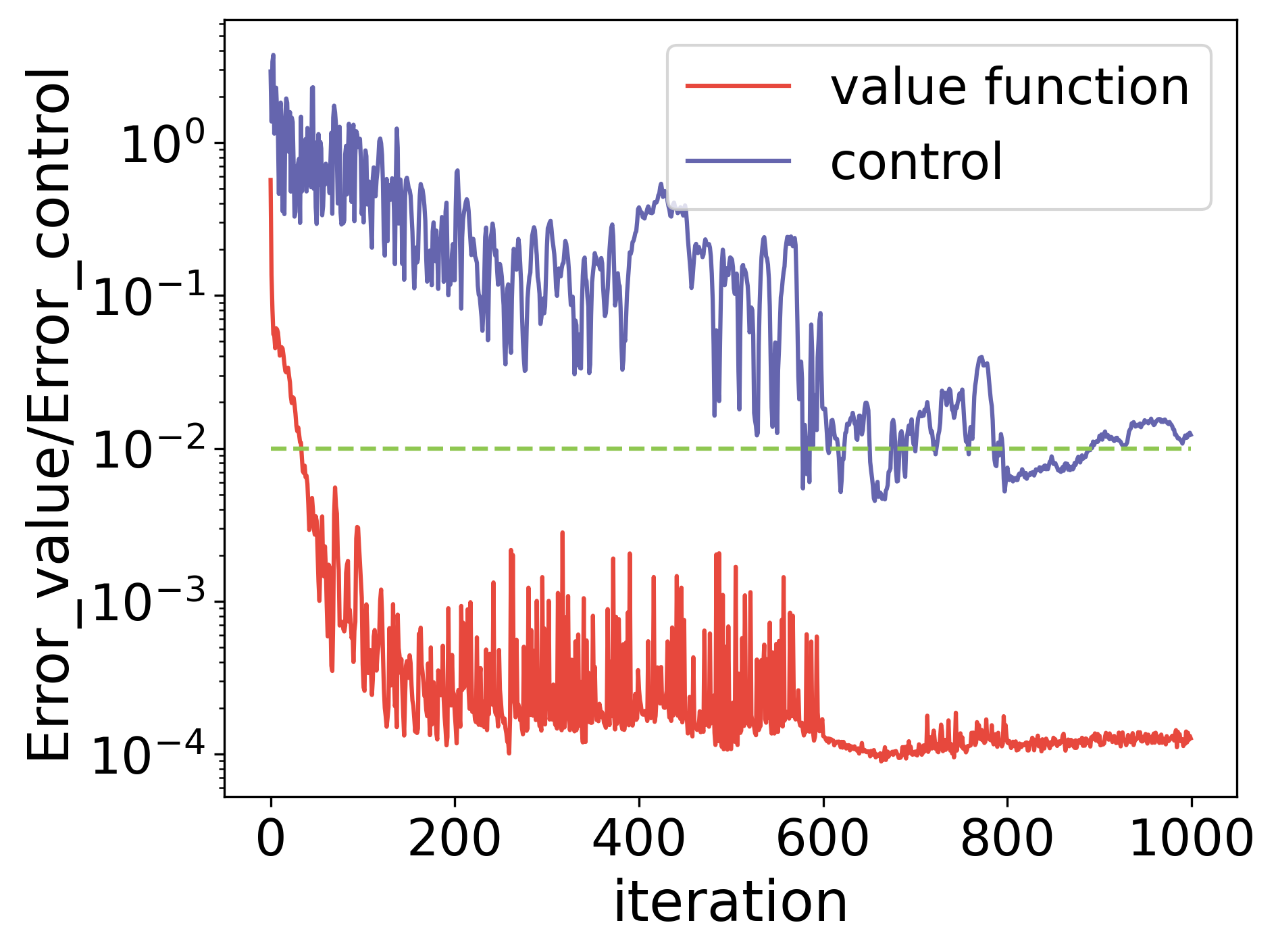}} 
  \subfloat[$L^1$ relative error of the control]{\includegraphics[width=13.5em]{./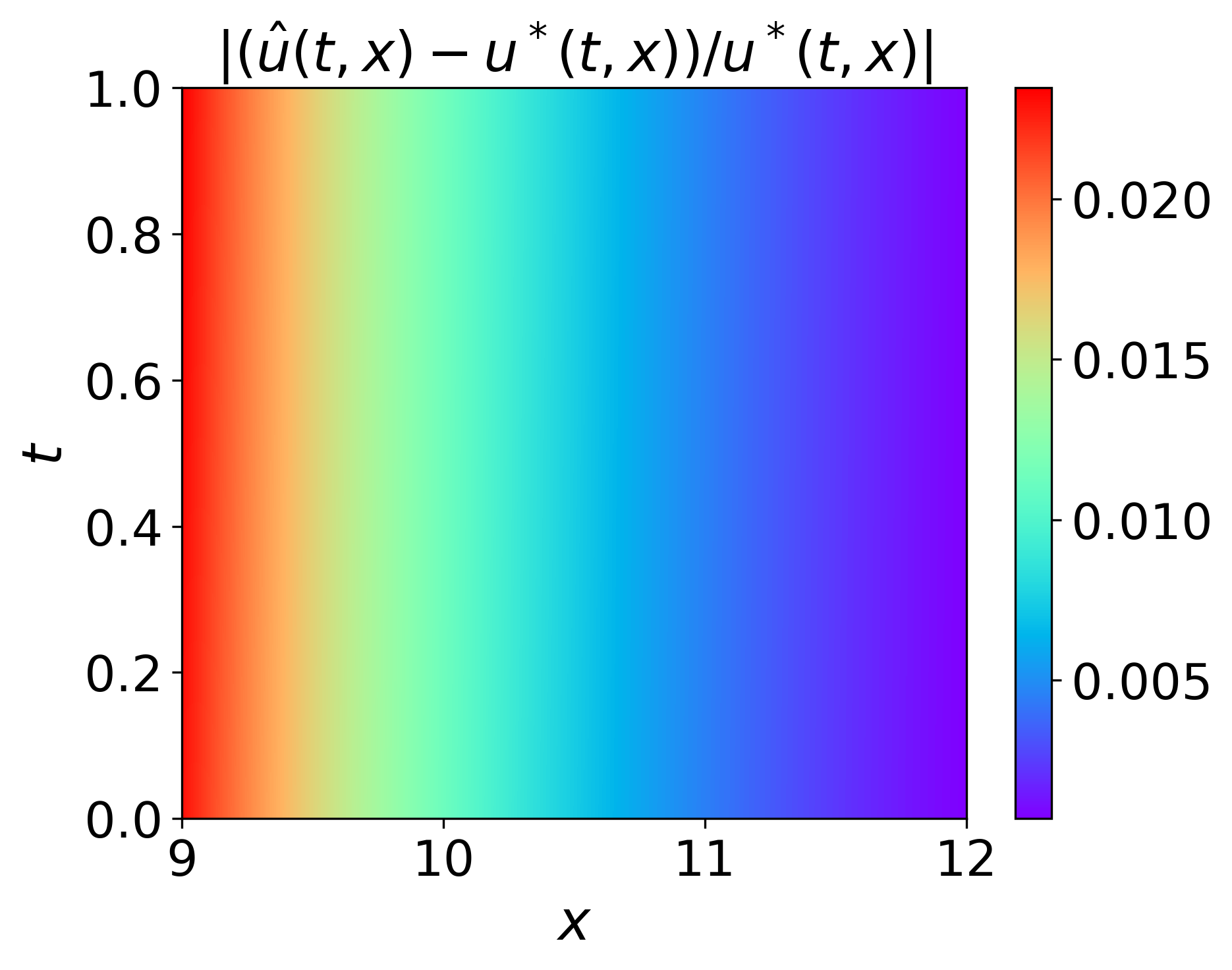}} 
  \caption{Merton's problem with jumps under power utility. (a)--(c) Visualization of sample trajectories of the value function along the optimal state process $v(t, X_t)$, the control process $u^\ast(t, X_t)$, and the state process $X_t$, as well as  their approximated counterparts; (d) Plot of the \texttt{CriticLoss} \eqref{eq.CriticLoss} and the \texttt{ActorLoss} \eqref{eq.ActorLoss} with respect to the training iterations; (e) Plot of the \texttt{Error\_value} \eqref{def.error.value} and \texttt{Error\_control} \eqref{def.error.control} with respect to the training iterations; and (f) Heatmap of the $L^1$ relative error of the approximated control $\hat u(t,x)$ as a bivariate function of $t$ and $x$.}
  \label{fig.merton_traj}
\end{figure}

The performance is further analyzed in Figure~\ref{fig.merton_error}, where Panels (a)--(b) display the $L^2$ relative errors of the value function $e_t^v$ and the control $e_t^u$ with respect to the spatial variable $x$ at each time $t$:
\begin{equation}\label{eq.l2relative}
    e_t^v=\sqrt{\frac{\sum_{j=1}^M \left(\hat{v}(t, X_{t}^j)-v(t, X_{t}^j)\right)^2}{\sum_{j=1}^M v^2(t, X_{t}^j)}}, \,  e_t^u=\sqrt{\frac{\sum_{j=1}^M \left(\hat{u}(t, X_{t}^j)-u^\ast(t, X_{t}^j)\right)^2}{\sum_{j=1}^M (u^\ast)^2(t, X_{t}^j)}}.
\end{equation}
Panel (c) shows the robustness of our algorithm with respect to poor initializations. Recall that we set a bound $b$ for 
the neural network representing the control $\mathcal{N}_\pi$ to prevent possible early explosions in the entire training process. The original initialization choice of $b$ produces a range $(-1, 1)$ covering the optimal control $u^*=0.2331$, numerically solved from \eqref{eq.merton_optimal_control}. In this test, we run the algorithm with two other initializations $(-0.05, 0.05)$ and $ (-0.15, 0.15)$. Panel (c) shows that, despite neither of them covering $u^*=0.2331$, the trainable parameter $b$ will gradually converge to $u^\ast$. This observation indicates that, the artificial bound is purely to increase the stability of the algorithm, without excluding any potential solution space.

\begin{figure}[!htb]
  \centering 
  \subfloat[$L^2$ relative errors $e_t^v$/$e_t^u$]{\includegraphics[width=15.75em,height=11.25em]{./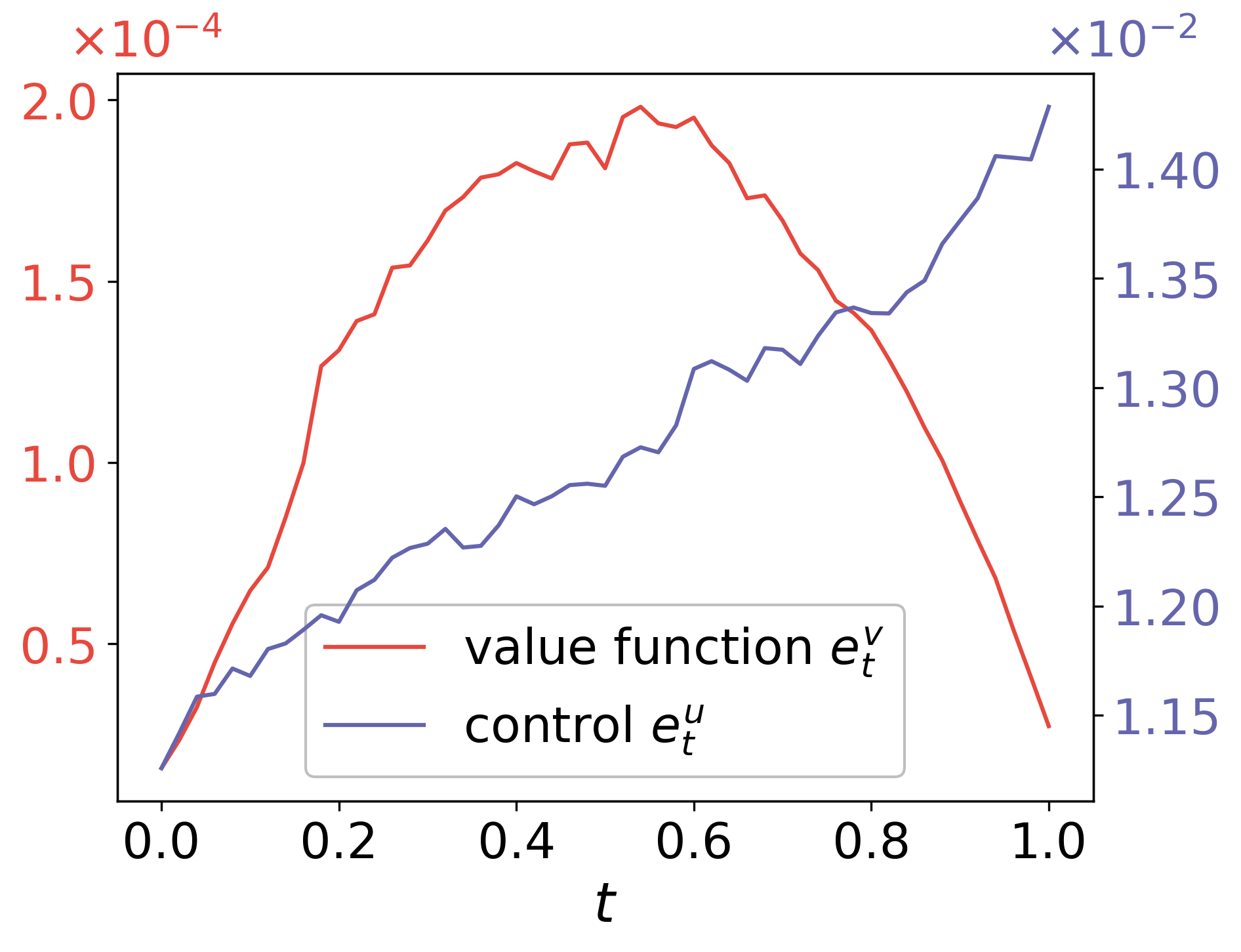}} 
  \subfloat[$b$]{\includegraphics[width=16em,height=11em]{./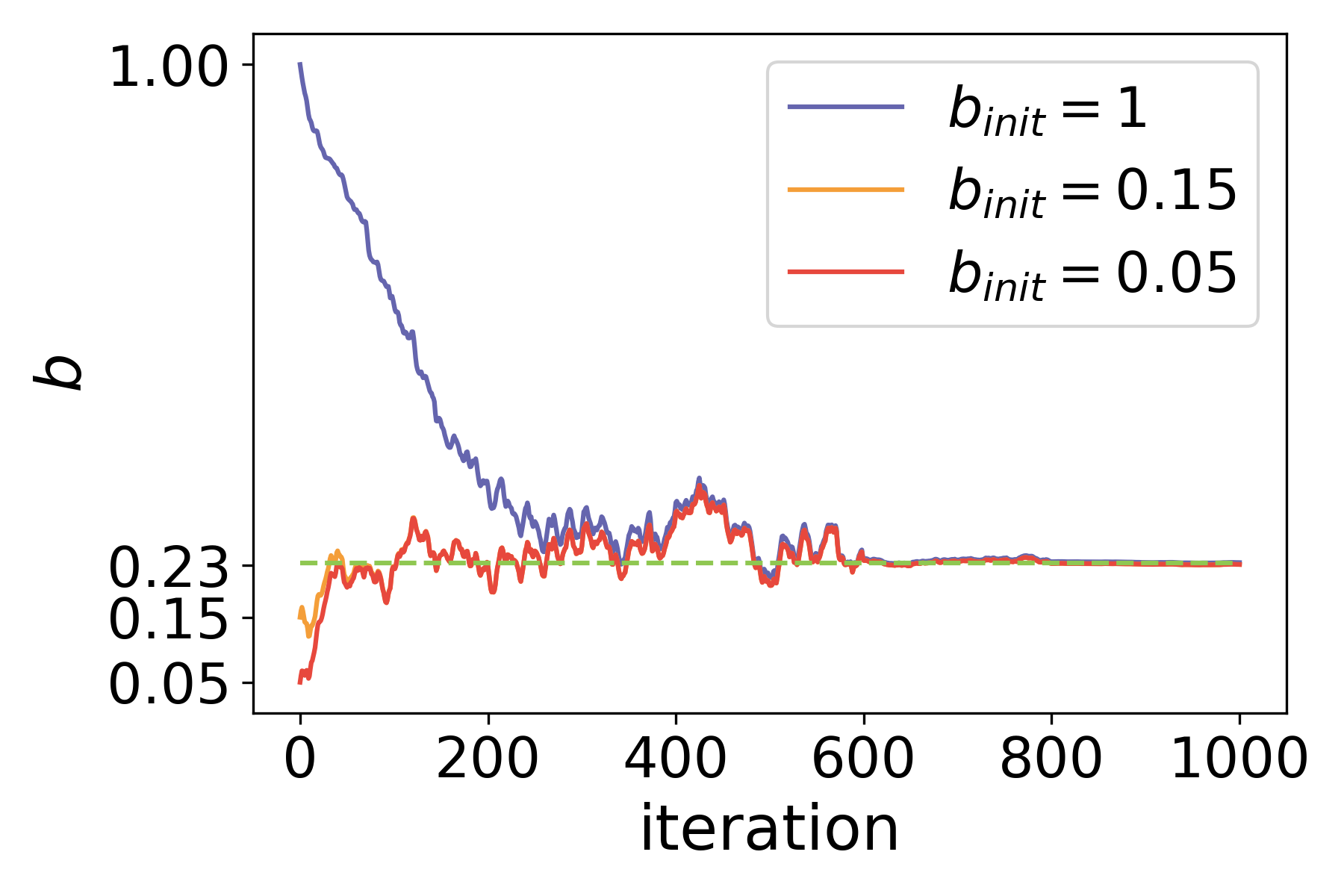}} 
  \caption{Merton's problem with jumps under power utility. (a) $L^2$ relative errors of the value function $e_t^v$ and the control $e_t^u$ at different times $t$; (b) Plot showing the evolution of the control function's bound $b$ (a trainable parameter in $\mathcal{N}_\pi$) with respect to training iterations under different initializations.}
  \label{fig.merton_error}
\end{figure}

Finally, we examine the impact of the choice of $M$ (number of trajectories) on the algorithm's accuracy, as demonstrated in Table \ref{tab.merton_traj}. The tests were conducted with power utility and the same parameter settings as before, except for the choice of $M$. It can be observed that increasing the number of trajectories does not significantly improve the accuracy. Balancing between accuracy and computational cost, we opt to use $M = 500$ trajectories in subsequent experiments.

\begin{table}[h]
  \centering
  \caption{The errors in the value function $v$ and the control $u^\ast$ of Merton's problem with jumps under power utility, trained with different numbers $M$ of trajectories. 
  } \label{tab.merton_traj}
  \begin{tabular}{lcccccc}
    \toprule
    Trajectories $M$ & 125 & 250 & 500 & 2000 & 4000 & 6000 \\
    \midrule
    \texttt{Error\_value} & 0.005\% & 0.011\% & 0.013\%  & 0.012\% & 0.011\% & 0.010\% \\ 
    \texttt{Error\_control} & 15.757\% & 1.803\% & 1.227\%  & 2.878\% & 1.293\% & 2.428\% \\
    Time (min) & 5.8 & 7.6 & 9.6 & 29.6 & 49.4 & 56.9 \\
    \bottomrule
  \end{tabular}
\end{table}

\subsection{Stochastic linear quadratic regulator problem}\label{sec.numerical.lqr}
In the second example, we consider a high-dimensional
linear-quadratic regulator control problem for jump-diffusion models. The state process  $X_t$ satisfies
\begin{equation*}
  \dif X_t = u_t \dif t+\sigma\dif B_t+z\dif M_t,
\end{equation*}
where the state $X_t\in\R^d$, the control $u_t\in\R^d$, $B_t$ is a $d$-dimensional standard Brownian motion, $M_t=(M_t^1,\cdots,M_t^d)$, $M_t^i=N_t^i-\lambda_i t$ and $N_t^i$ denotes a Poisson process with intensity $\lambda_i$. The $d \times d$ matrices $\sigma$ and $z$ are specified by 
\begin{equation*}
    \sigma = \sigma_0
  \begin{pmatrix}
    1 & 0 & 0 & 0 & \cdots & 0 \\
    1 & 1 & 0 & 0 & \cdots & 0 \\
    0 & 1 & 1 & 0 & \cdots & 0 \\
    0 & 0 & 1 & 1 & \cdots & 0 \\
    \vdots & \vdots & \vdots & \vdots & \ddots & 0 \\ 
    0 & 0 & 0 & \cdots & 1 & 1 
  \end{pmatrix}, \quad
  z = 
  \begin{pmatrix}
      z_1 & 0 & 0 & \cdots & 0 \\
      0 & z_2 & 0 & \cdots & 0 \\
      0 & 0 & z_3 & \cdots & 0 \\
      \vdots & \vdots & \vdots & \ddots & 0 \\
      0 & 0 & 0 & \cdots & z_d \\
  \end{pmatrix}.
\end{equation*}
The running cost and terminal cost in \eqref{eq.J} are of the form:
$$
    f(t,x,u) = q\|u\|^2 + b\|x\|^2, \quad g(x) = a\|x\|^2,
$$
with $q>0$, $b>0$, $a>0$. 

The optimal value function, defined by,
\begin{equation*}
  v(t,x) = \inf_{u} \E^{t,x} \left[\int_{t}^{T} (q\|u_s\|^2+b\|X_s\|^2)\dif s + a\|X_T\|^2\right],
\end{equation*}
satisfies the PIDE
\begin{equation*}
\begin{aligned}
    \partial_t v & +\inf_u \bigg[u\cdot \nabla v +\frac{1}{2}\mbox{Tr}(\sigma\sigma^T\mbox{H}(v)) \\
    & +\sum_{i=1}^d\lambda_i\Big(v(t,x_1,\cdots,x_i+z_i,\cdots,x_d)-v(t,x)-z_i \partial_{x_i} v(t,x) \Big) +q\|u\|^2+b\|x\|^2 \bigg] = 0, \\
\end{aligned}
\end{equation*}
with the terminal condition $v(T,x)=a\|x\|^2$. The solution is quadratic in $x$
\begin{equation*}
  \begin{aligned}
    v(t,x) = &\ \sqrt{bq}\left(-1+\frac{2(\sqrt{bq}+a)}{(\sqrt{bq}-a)e^{-2\sqrt{\frac{b}{q}}(T-t)}+\sqrt{bq}+a}\right)\|x\|^2 \\ 
    & + \left(-\sqrt{bq}(T-t)+q\log\frac{\sqrt{bq}-a+(\sqrt{bq}+a)e^{2\sqrt{\frac{b}{q}}(T-t)}}{2\sqrt{bq}}\right) \Big((2d-2)\sigma_0^2+\sum_{i=1}^d\lambda_i z_i^2\Big),
  \end{aligned}
\end{equation*}
and the optimal control is linear in the state $x$: 
\begin{equation*}
  u^*(t,x)=-\frac{\nabla v}{2q}=-\sqrt{\frac{b}{q}}\left(-1+\frac{2(\sqrt{bq}+a)}{(\sqrt{bq}-a)e^{-2\sqrt{\frac{b}{q}}(T-t)}+\sqrt{bq}+a}\right)x.
\end{equation*}

In this problem, we set the model parameters as follows: $\sigma_0=0.4$, $z_i=0.3-0.1(i-1)/(n-1)$, $\lambda_i=0.2+0.1(i-1)/(n-1)$, $b=0.1$, $q=5$, $a=1$ and the initial value $x_0=(1,1,\cdots,1)$. The \texttt{Error\_control} is computed as the average of errors in each of its dimensions. In the 5-dimensional experiment, using the original \texttt{ActorLoss} $J$, the \texttt{Error\_value} is 2.991\%, and the \texttt{Error\_control} is 2.061\%. 
We then tested our algorithm in various dimensions with the same set of parameters and recorded the errors in Table~\ref{tab.LQR}. It is evident that our approach demonstrates promising performance even in high-dimensional LQR problems under jump-diffusion models. The numerical performance of the 25-dimensional problem is presented in Figure \ref{fig.LQR}. Panels (a)--(b) plots, respectively, multiple trajectories of the value function $v$ and the 25-th control process $u_{25}$ along the optimal trajectories, and state process $X_t$ \emph{vs.} their approximated counterparts, all of which closely match the exact trajectories. Panels~(c)--(d) demonstrates the decreasing trend of the losses and the errors during training, and panel~(e) shows the $L^2$ relative error of the control $e_t^u$ and the value function $e_t^v$ at each time point $t$ with respect to the spatial variable $x$, where $e_t^v$ follows the definition~\eqref{eq.l2relative} and $e_t^u$ is defined similarly but further averaged over all 25 dimensions. 

\begin{table}[h]
    \centering
    \caption{The errors in the value function $v$ and the control $u^\ast$ of LQR problems in different dimensions $d$. }
    \label{tab.LQR}
    \begin{tabular}{lccccc}
        \toprule
        Dimension $d$ & 5 & 10 & 15 & 20 & 25 \\
        \midrule
        \texttt{Error\_value} & 2.991\% & 2.080\% & 1.530\% & 1.457\% & 1.410\% \\ 
        \texttt{Error\_control} & 2.061\% & 2.288\% & 2.471\% & 3.125\% & 3.696\% \\
        Time (min) & 11.1 & 22.4 & 33.6 & 40.5 & 63.2 \\
        \bottomrule
    \end{tabular}
\end{table}

\begin{figure}[!htb]
  \centering 
  \subfloat[The value function]{\includegraphics[width=13.5em]{./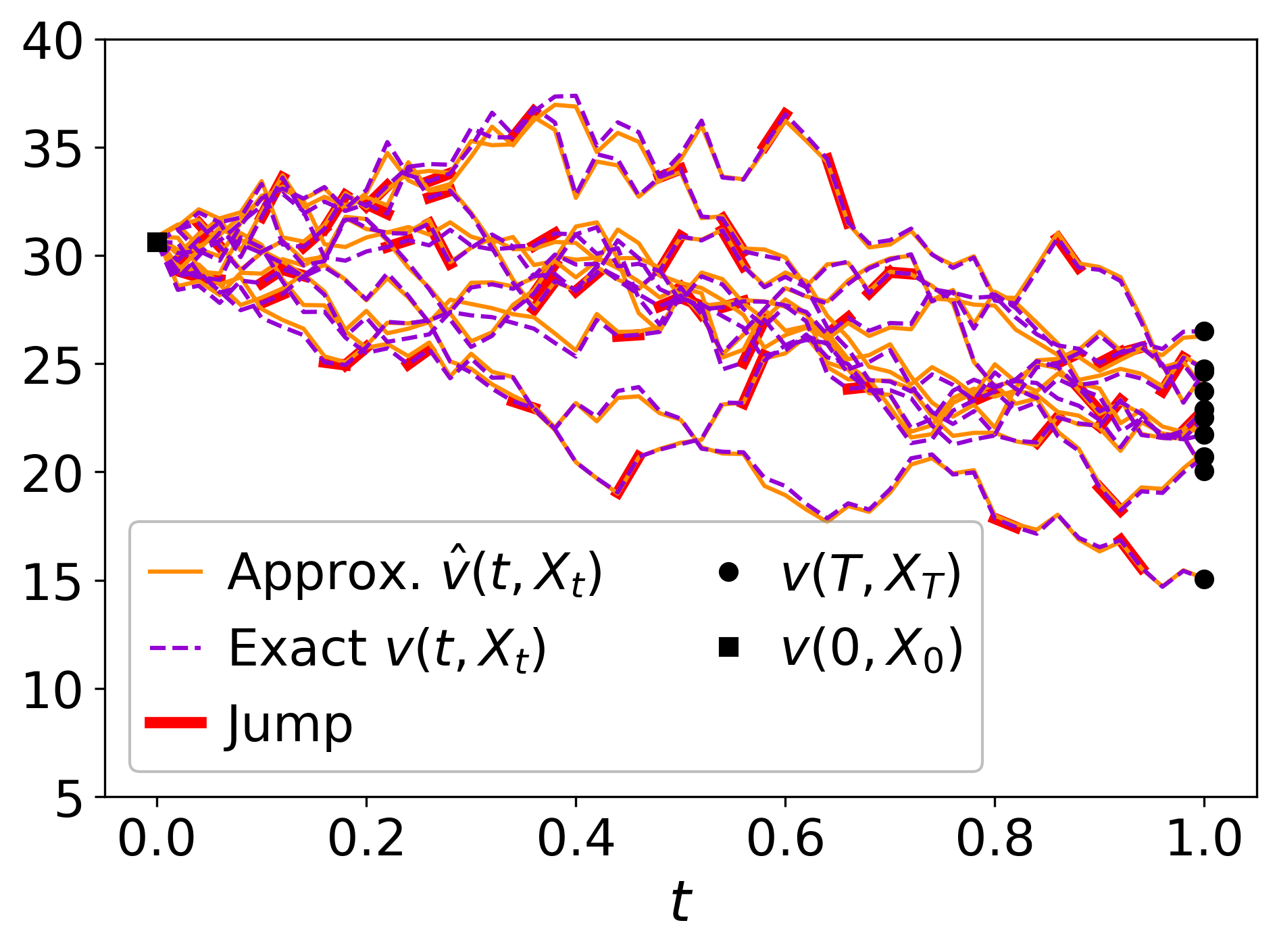}} 
  \subfloat[The $25^{th}$ entry of the control process]{\includegraphics[width=13.5em]{./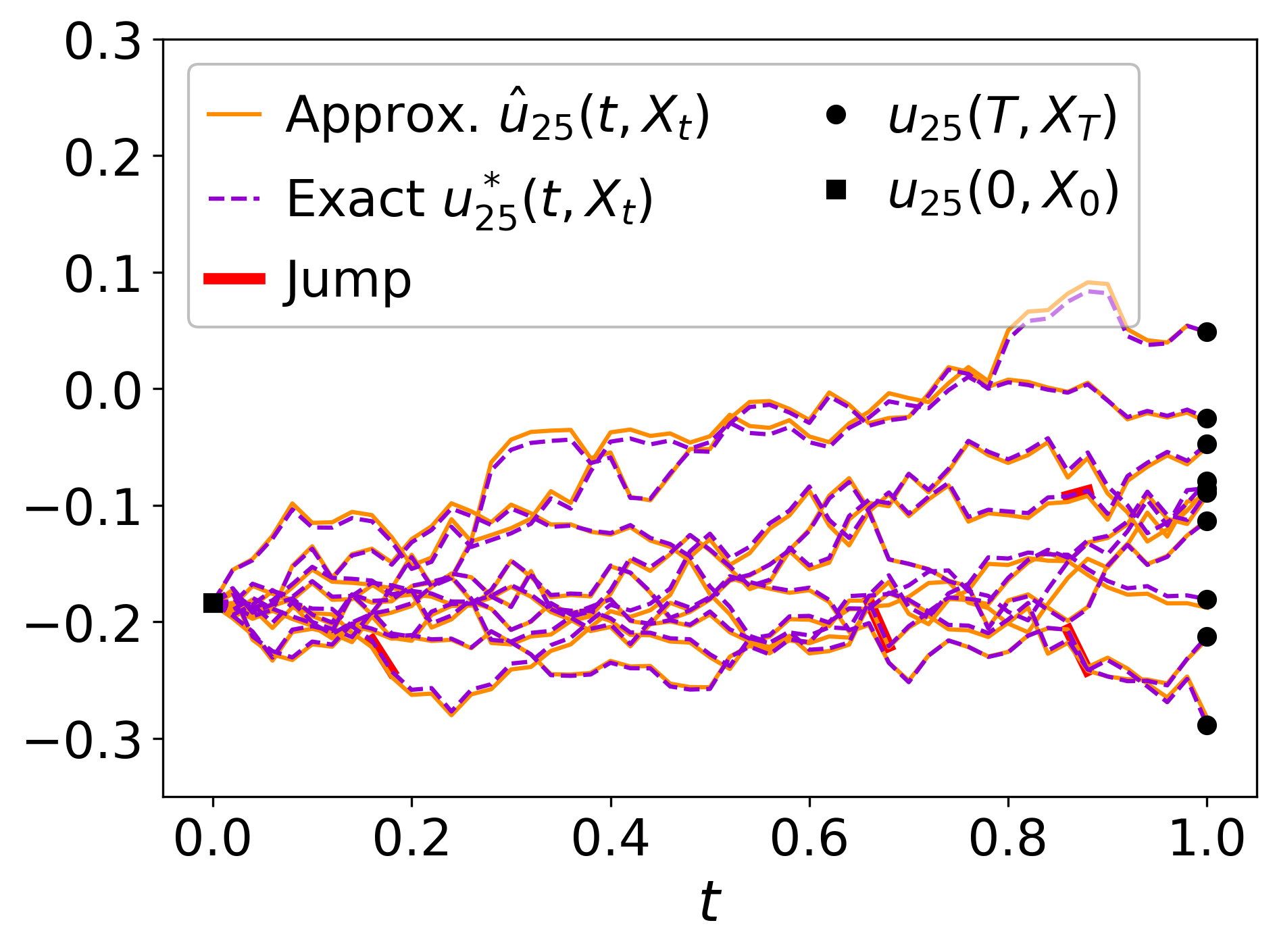}}  
 
  \subfloat[Losses during the training]{\includegraphics[width=13.5em,height=10.25em]{./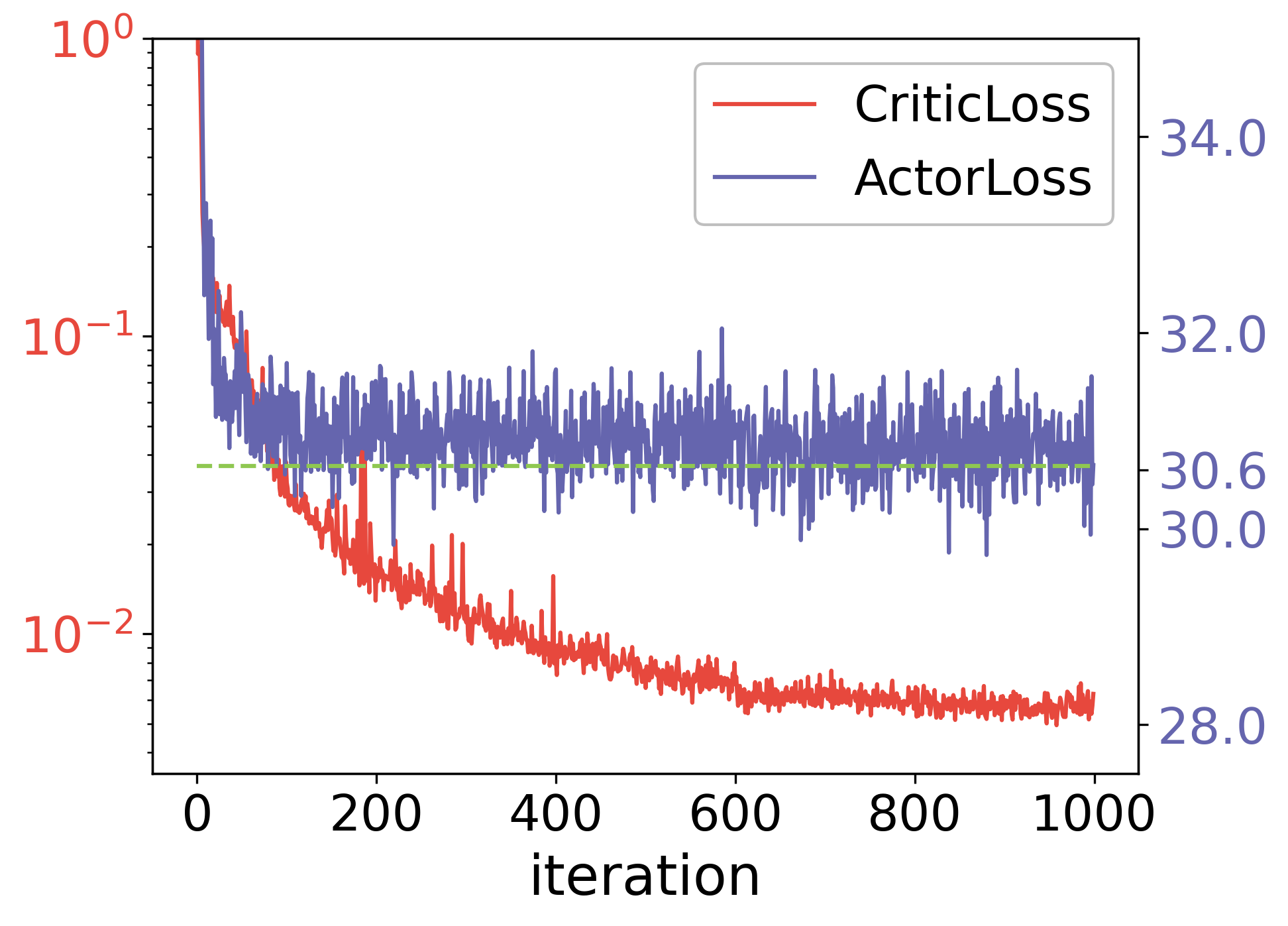}}
  \subfloat[Errors during the training]{\includegraphics[width=13.5em]{./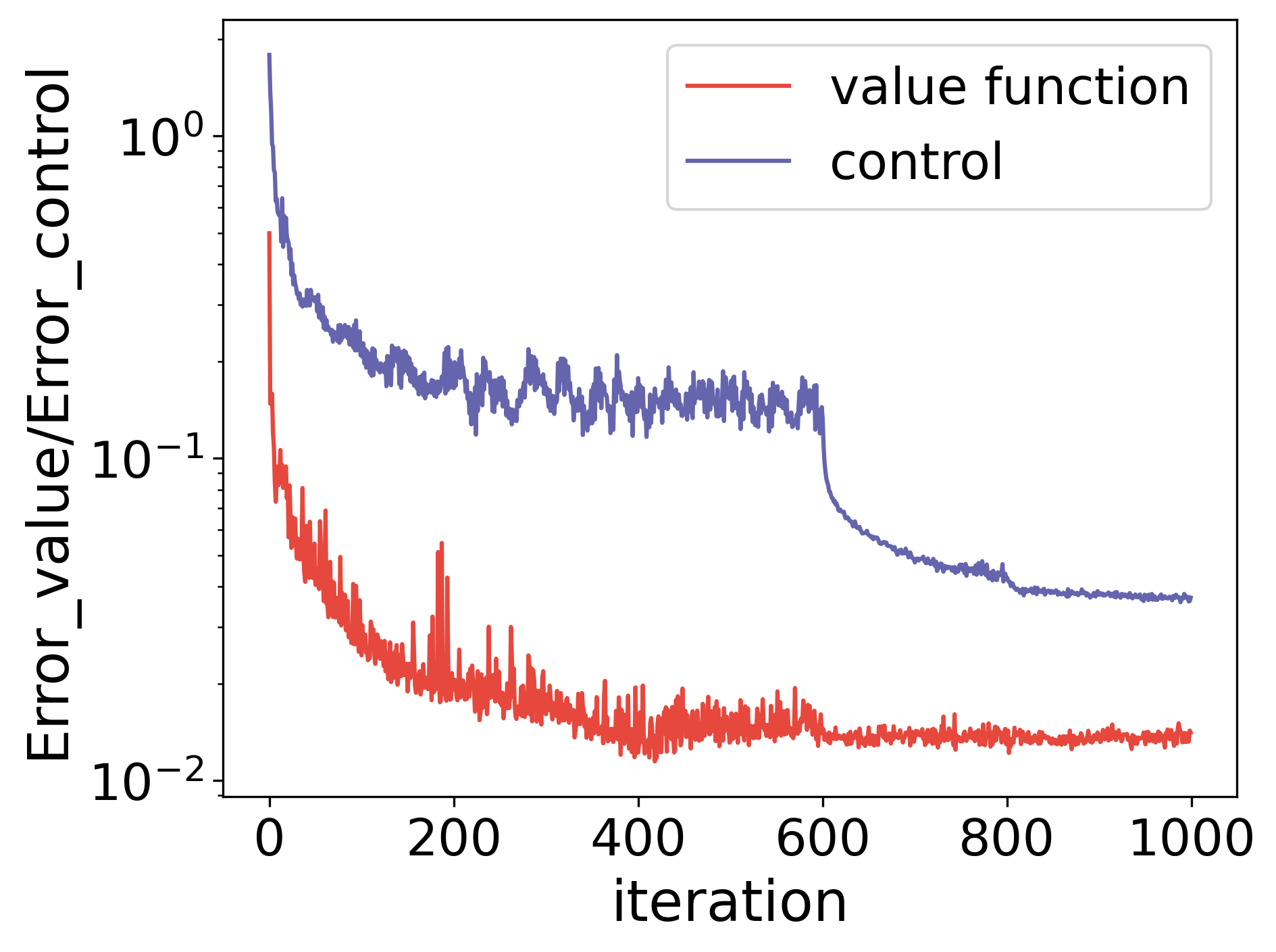}}
  \subfloat[$L^2$ relative errors $e_t^u$/$e_t^v$]{\includegraphics[width=13.5em]{./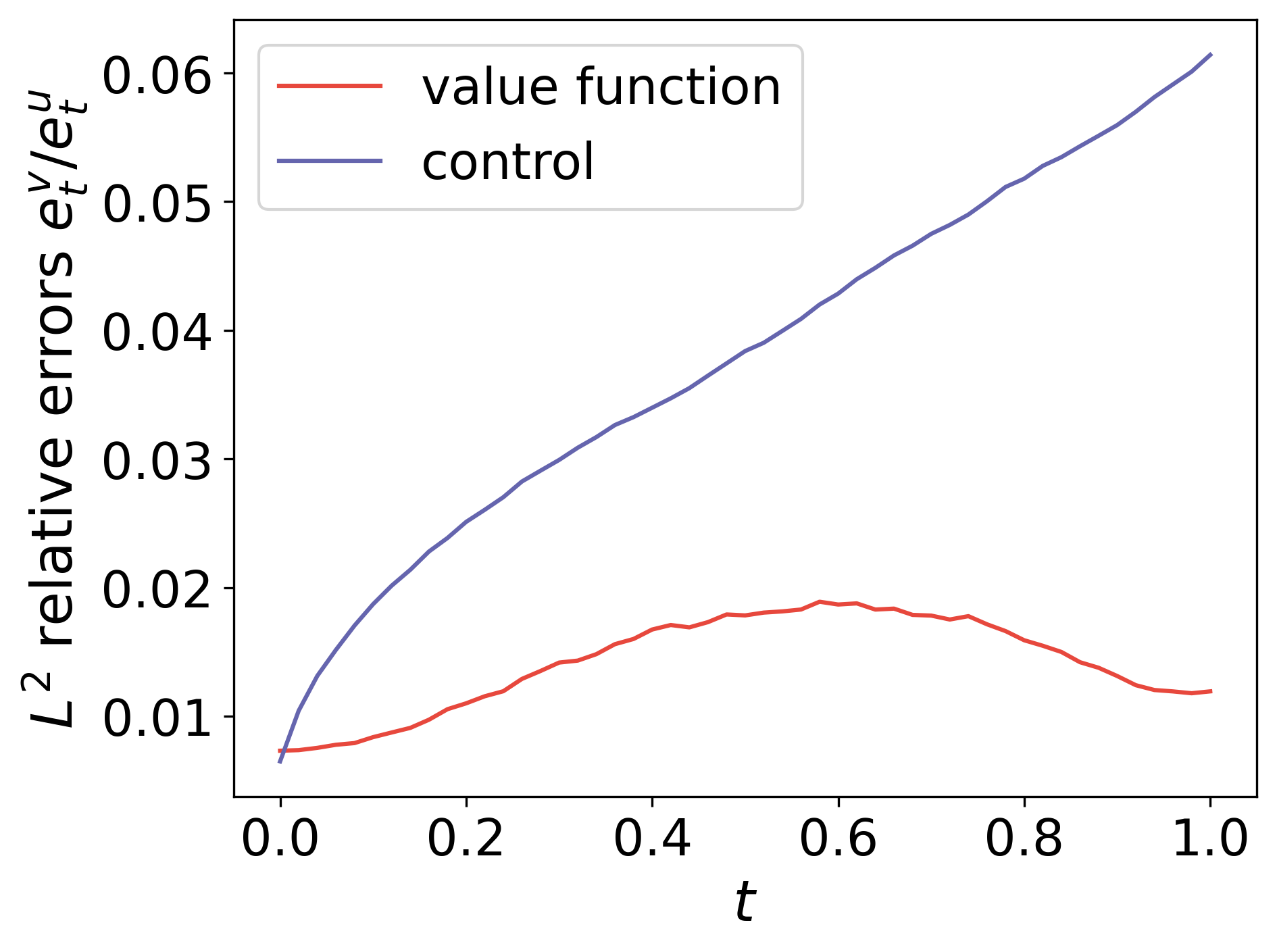}}  
  \caption{Linear quadratic regulator with jumps in 25 dimensions. (a)--(b) Visualization of sample trajectories of the value function along the optimal state process $v(t, X_t)$ and the $25^{th}$ entry of the optimal control problem $u^\ast(t, X_t)$ as well as their approximated counterparts; (c) Plot of the \texttt{CriticLoss} and the \texttt{ActorLoss} with respect to the training iterations; (d) Plot of the \texttt{Error\_value} and \texttt{Error\_control} with respect to the training iterations; and (e) $L^2$ relative errors of the value function $e_t^v$ and the control $e_t^u$ at each time $t$. The errors of the control function in (d)--(e) are computed as the average of errors in each of its dimensions.}
  \label{fig.LQR}
\end{figure}

In addition, we tested the method \eqref{eq.explosion-2} proposed in Section~\ref{sec.implementation_details}, which aims to prevent the explosion of the process $X_t$ by not restricting the output of the control but limiting the values of $X_t$ in the first few steps of training. With all other parameters unchanged in the 25-dimensional example, we limited the value of $X_t$ to $X_t=\min\{|X_t|,5\}$ in the first 100 steps of training. The final \texttt{Error\_value} is 1.332\%, and the \texttt{Error\_control} is 3.355\%, which are comparable with 1.410\% and 3.696\%, respectively. This indicates that restricting the state process at the early stage of training does not necessarily preclude the possibility of finding the optimal control.  
To conclude, we find both methods proposed in \eqref{eq.explosion}--\eqref{eq.explosion-2} to be viable solutions. 

\subsection{The multi-agent portfolio game}\label{sec.numerical.games}
In this final example, we assess the performance of Algorithm~\ref{algo.game} on the optimal investment game under relative performance in a jump-diffusion market for $n$ agents. Semi-explicit Nash equilibrium solutions, along with the corresponding optimal value functions, have been derived under exponential utility, power utility, and logarithmic utility in Sections~\ref{sec.exp}--\ref{sec.log} respectively. We will use those results as references to evaluate the accuracy of the proposed deep RL-based framework.

For the stock process $S_t^i$ traded by agent $i$ \eqref{eq.stock}, we set the parameters as follows:
\begin{gather*}
  \mu_i=\begin{cases} 0.05, & i=1 \\ 0.04, & 2\leq i\leq n \end{cases}, \quad 
  \nu_i=\begin{cases} 0.4, & i=1 \\ 0.3, & 2\leq i\leq n \end{cases}, \quad 
  \sigma_i=\begin{cases} 0.35, & i=1 \\ 0.25, & 2\leq i\leq n \end{cases}, \\ 
  \alpha_i=\begin{cases} 0.3, & i=1 \\ 0.2, & 2\leq i\leq n \end{cases}, \quad 
  \beta_i=\begin{cases} 0.25, & i=1 \\ 0.15, & 2\leq i\leq n \end{cases}, \quad 
  \lambda_i=\begin{cases} 0.3, & i=1 \\ 0.2, & 2\leq i\leq n \end{cases},
\end{gather*}
and $\lambda_0=0.25$. As for the utility functions \eqref{eq.utility_exp}, \eqref{eq.utility_power} and \eqref{eq.utility_log}, we choose
\begin{equation}
  \theta_i=\begin{cases} 0.8, & i=1 \\ 0.7, & 2\leq i\leq n \end{cases}, \quad
  \delta_i=\begin{cases} 2, & i=1 \\ 1, & 2\leq i\leq n \end{cases}, \quad
  p_i=\begin{cases} 0.5, & i=1 \\ 0.4, & 2\leq i\leq n \end{cases}.
\end{equation}
This way, the situation faced by the first agent differs from that of the other agents. To reduce computational cost, we decrease the number of residual blocks in the network architecture from 3 to 2 (cf. Figure~\ref{fig.ResNet}). By Theorems~\ref{thm.exp}, \ref{thm.power} and \ref{thm.log}, we can conclude that there exists a unique Nash equilibrium in $\{(\pi_1,\cdots,\pi_n): |\pi_i|\leq 1, \forall i=1,2,\cdots,n\}$ for exponential and power utilities, and in $\{(\pi_1,\cdots,\pi_n):1+\pi_i\alpha_i>0$ and $1+\pi_i\beta_i>0\}$ for logarithmic utility, under the above parameter choices. The Python function \texttt{scipy.optimize.fsolve} is used to solve equations~\eqref{eq.thm_exp}, \eqref{eq.thm_power} and \eqref{eq.thm_log}, to obtain reference solutions. 

Figure \ref{fig.game_main} illustrates the results for the 20-agent game under the exponential utility. The \texttt{Error\_value\_game} is 0.477\%, and the \texttt{Error\_control\_game} is 0.862\%. Panel~(a) depicts the change in errors with training iterations. Panels~(b)--(c) display the $L^2$ relative errors of the value functions and the control for all 20 agents at each time point $t$ with respect to the spatial variable $x$. 
Additionally, we conduct experiments for different numbers of agents $n$ and different types of utility functions $U$. The results are summarized in Table \ref{tab.game}. It can be observed that our algorithm performs well in various scenarios, with slightly better accuracy under exponential utility compared to logarithmic utility. Table \ref{tab.game_time} compares the computational time of parallel computing framework and non-parallel computing framework under exponential utility and different numbers of agents $n$. It can be observed that the design of the parallel computing framework greatly improves the efficiency of the algorithm.

\begin{figure}[!htb]
  \centering 
  \subfloat[Errors during the training]{\includegraphics[width=13.5em]{./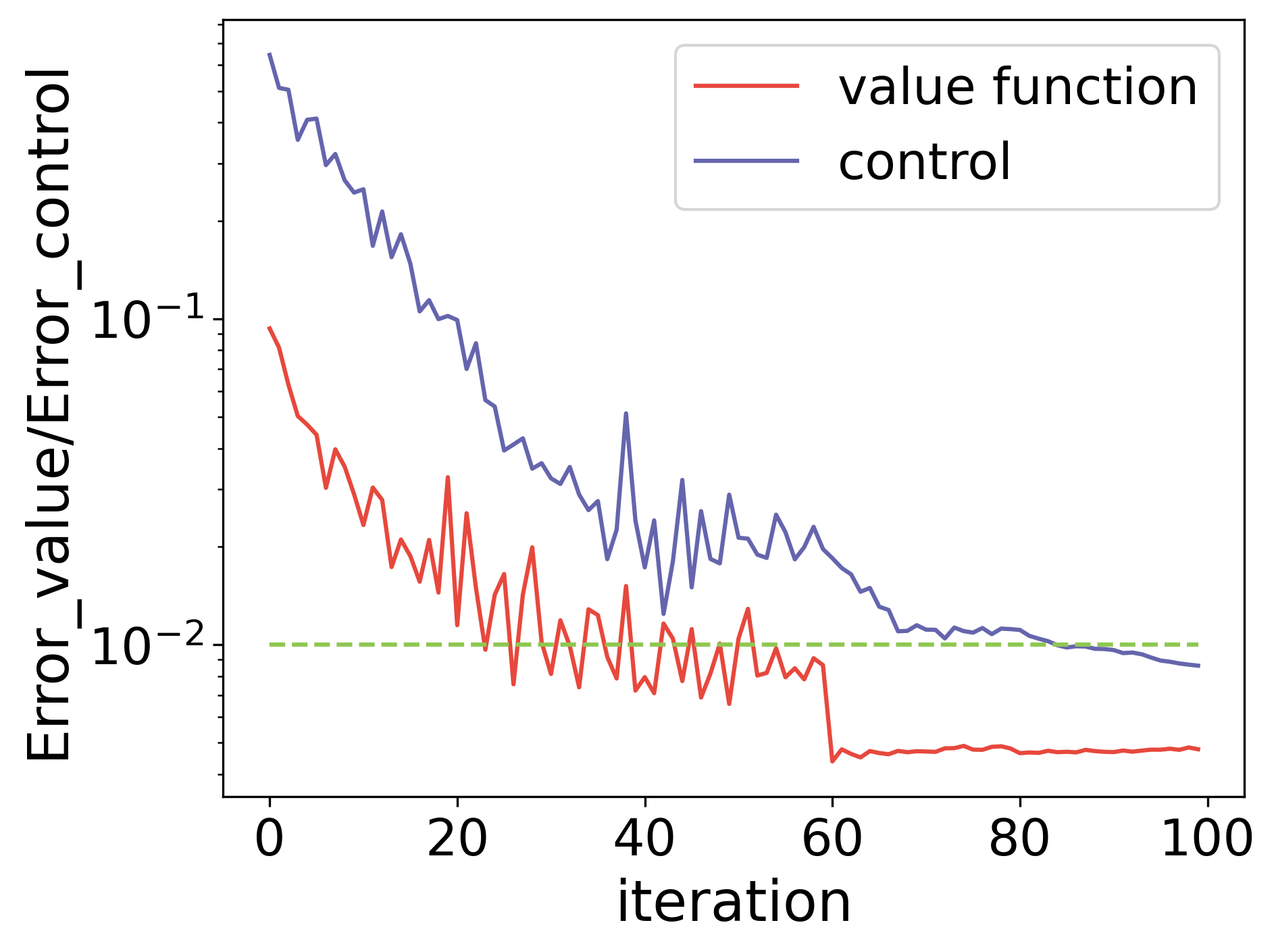}} 
  \subfloat[$L^2$ relative error $e_t^v$]{\includegraphics[width=13.5em]{./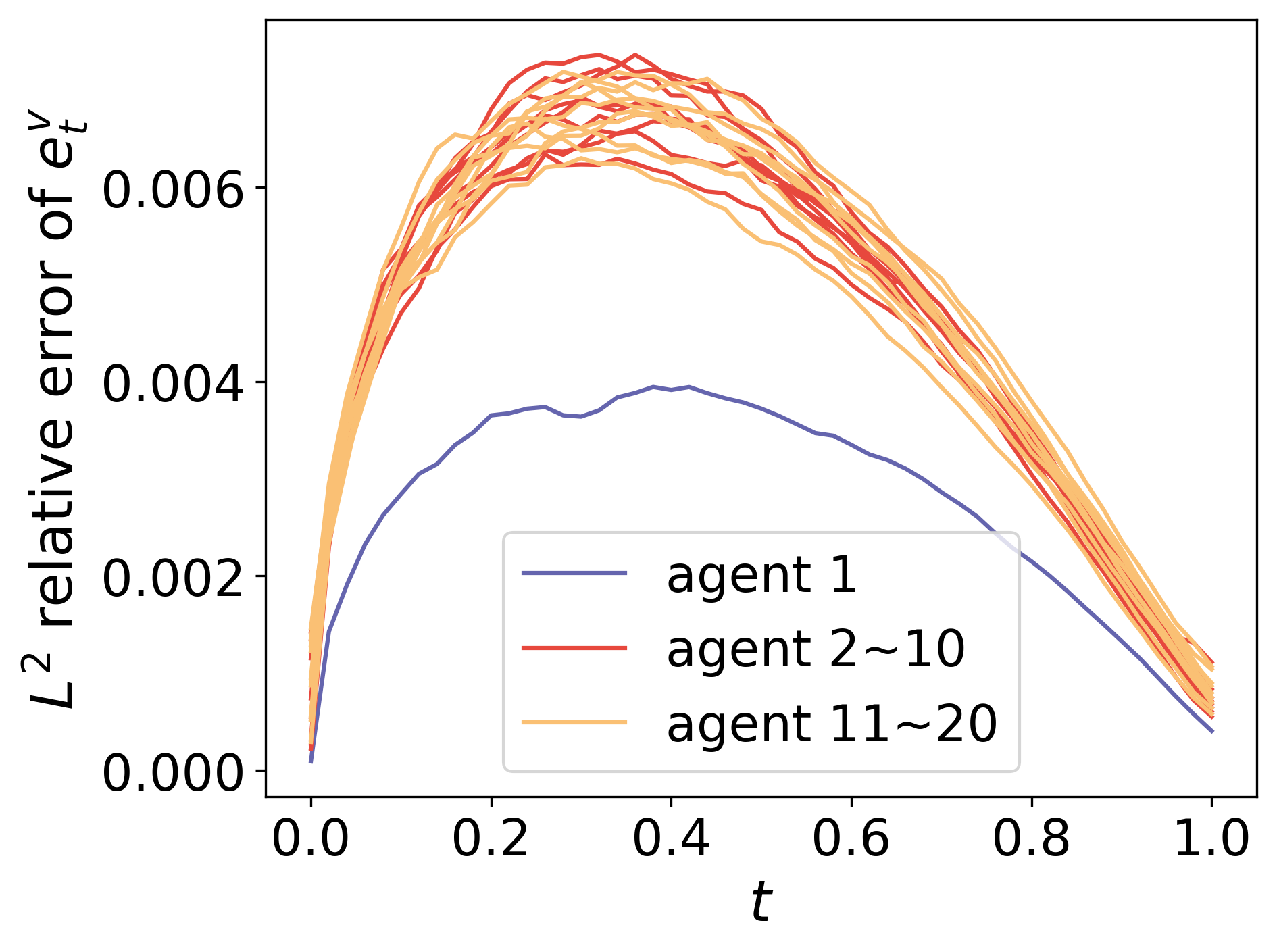}} 
  \subfloat[$L^2$ relative error $e_t^u$]{\includegraphics[width=13.5em]{./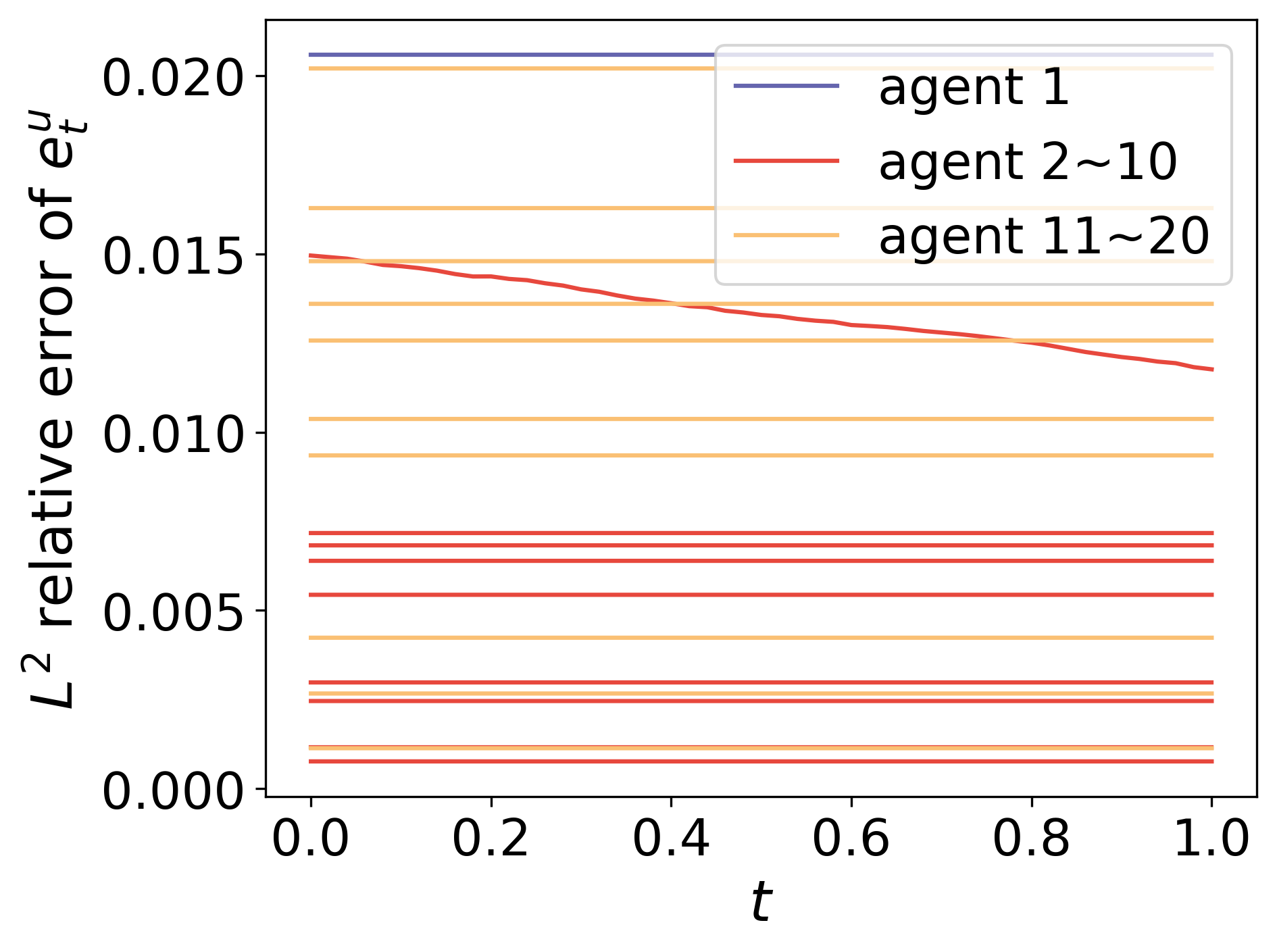}} 
  \caption{Optimal investment game with 20 agents in a jump-diffusion market under exponential utility. (a) Plot of the \texttt{Error\_value} and the \texttt{Error\_control} with respect to the training iterations; (b)--(c) $L^2$ relative error of the value function $e_t^v$ and the control $e_t^u$ with respect to the spatial variable $x$ at different time $t$ for each agent.}
  \label{fig.game_main}
\end{figure}

\begin{table}[!htb]
  \centering
  \caption{The errors in the value functions and the equilibrium strategies of the multi-agent portfolio game under different utilities and with different numbers of agents.} \label{tab.game}
  \begin{tabular}{l|cc|cc|cc}
    \toprule
    & \multicolumn{2}{c}{Exponential} & \multicolumn{2}{|c|}{Power} & \multicolumn{2}{c}{Logarithmic} \\ 
    \midrule
    & \texttt{Err\_value}  & \texttt{Err\_control} & \texttt{Err\_value}  & \texttt{Err\_control} & \texttt{Err\_value} & \texttt{Err\_control} \\
    \midrule
    $d=5$ & 0.537\% & 0.873\% & 0.237\% & 1.567\% & 1.114\% & 1.316\% \\
    $d=10$ & 0.504\% & 0.933\% & 0.266\% & 1.027\% & 1.187\% & 1.162\% \\ 
    $d=15$ & 0.511\% & 0.461\% & 0.281\% & 1.415\% & 1.194\% & 1.085\% \\ 
    $d=20$ & 0.477\% & 0.862\% & 0.280\% & 1.360\% & 1.204\% & 0.936\% \\
    \bottomrule
  \end{tabular}
\end{table}

\begin{table}[!htb]
  \centering
  \caption{The computational time (in minutes) of the multi-agent optimal investment game under exponential utility.} \label{tab.game_time}
  \begin{tabular}{lccccc}
    \toprule
    \# of agents $n$ & 2 & 4 & 6 & 8 & 10 \\
    \midrule
    Parallel computing & 57 & 97 & 143 & 309 & 497 \\ 
    Non-parallel computing & 83 & 250 & 479 & 1286 & 2086 \\
    Ratio & 0.687 & 0.388 & 0.299 & 0.240 & 0.238 \\
    \toprule
    \# of agents $n$ & 12 & 14 & 16 & 18 & 20 \\ 
    \midrule
    Parallel computing & 774 & 1076 & 1734 & 1959 & 2236 \\
    Non-parallel computing & 3164 & 4508 & 6368 & 6516 & 8180 \\
    Ratio & 0.245 & 0.239 & 0.272 & 0.301 & 0.273 \\
    \bottomrule
  \end{tabular}
\end{table}

Finally, we test a scenario in which agents are all heterogeneous, and where the parameters are chosen on purpose such that the conditions for the existence and uniqueness of the Nash equilibrium given are \emph{not} satisfied. This serves to illustrate that the given conditions are sufficient technical assumptions but not necessary. We computed this case with the exponential utility, $n = 10$ agents, and parameters for the stock price  \eqref{eq.stock} and utility \eqref{eq.utility_exp} are set as 
\begin{align*}
  &\mu_i=0.04+0.01\frac{i-1}{n-1}, \quad &&\nu_i=0.1+0.3\frac{i-1}{n-1}, \quad &&\sigma_i=0.2+0.2\frac{i-1}{n-1}, \\ 
  &\alpha_i=0.2+0.1\frac{i-1}{n-1}, \quad &&\beta_i=0.2+0.1\frac{i-1}{n-1}, \quad &&\delta_i=1+\frac{i-1}{n-1}, \\ 
  &\theta_i=0.7+0.1\frac{i-1}{n-1}, \quad &&\lambda_i=0.2+0.1\frac{i-1}{n-1}, \quad &&\lambda_0=0.25,  \qquad \forall i \in \mathcal{I}.
\end{align*}
In this case, the equilibrium strategy, obtained by running \textit{scipy.optimize.fsolve} from multiple initializations, is unique and bounded by $C = 1.2$, while the condition \eqref{eq.unique_exp_cond2} in Theorem \ref{thm.exp} is not satisfied for this bound.   
Our deep RL algorithm yields an \texttt{Error\_value\_game} of 0.152\% and an \texttt{Error\_control\_game} of 0.965\%, with the performance visualized in Figure \ref{fig.game_dissatisfy}. 
Panel~(c) is computed based on the constant Nash equilibrium derived in Theorem \ref{thm.exp},where the small errors indicate convergence to this equilibrium, despite using neural networks with inputs of both time $t$ and space $x$ to parameterize the control and value functions. Without theoretical verification, we conjecture that the Nash equilibrium in Section \ref{sec.game} can only be constant.

\begin{figure}[tbhp]
  \centering 
  \subfloat[Errors during the training]{\includegraphics[width=13.5em]{./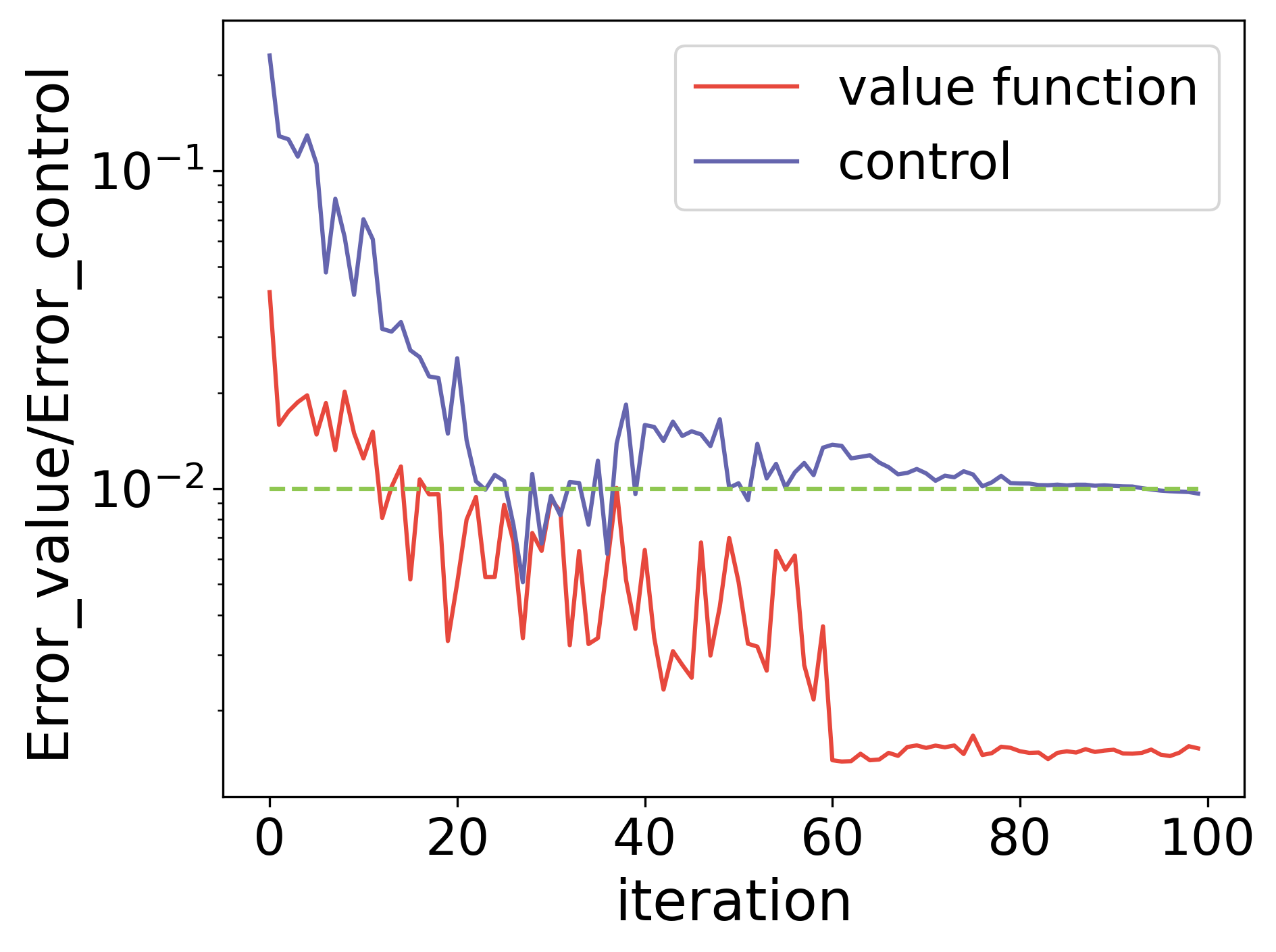}} 
  \subfloat[$L^2$ relative error $e_t^v$]{\includegraphics[width=13.5em]{./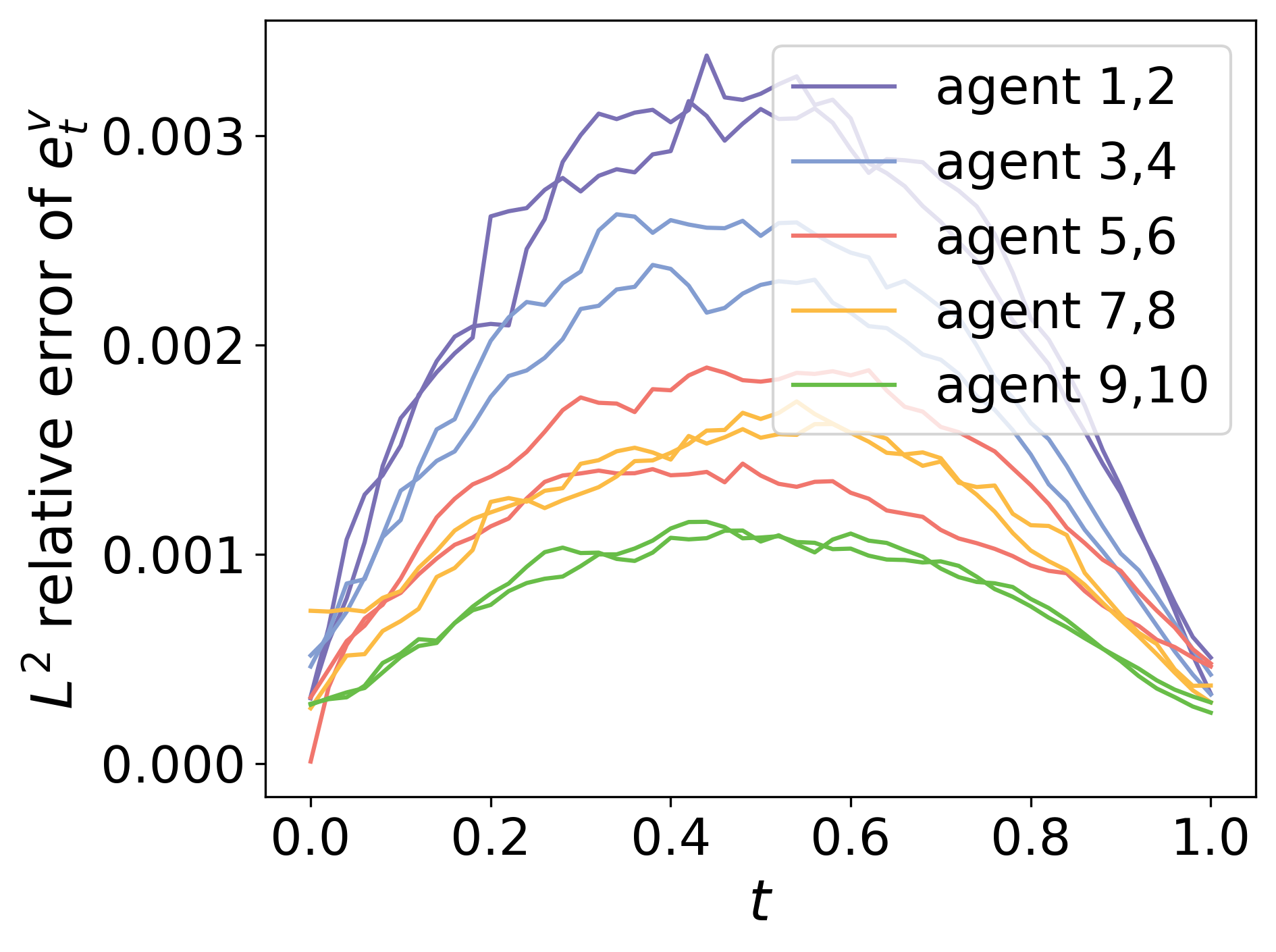}} 
  \subfloat[$L^2$ relative error $e_t^u$]{\includegraphics[width=13.5em]{./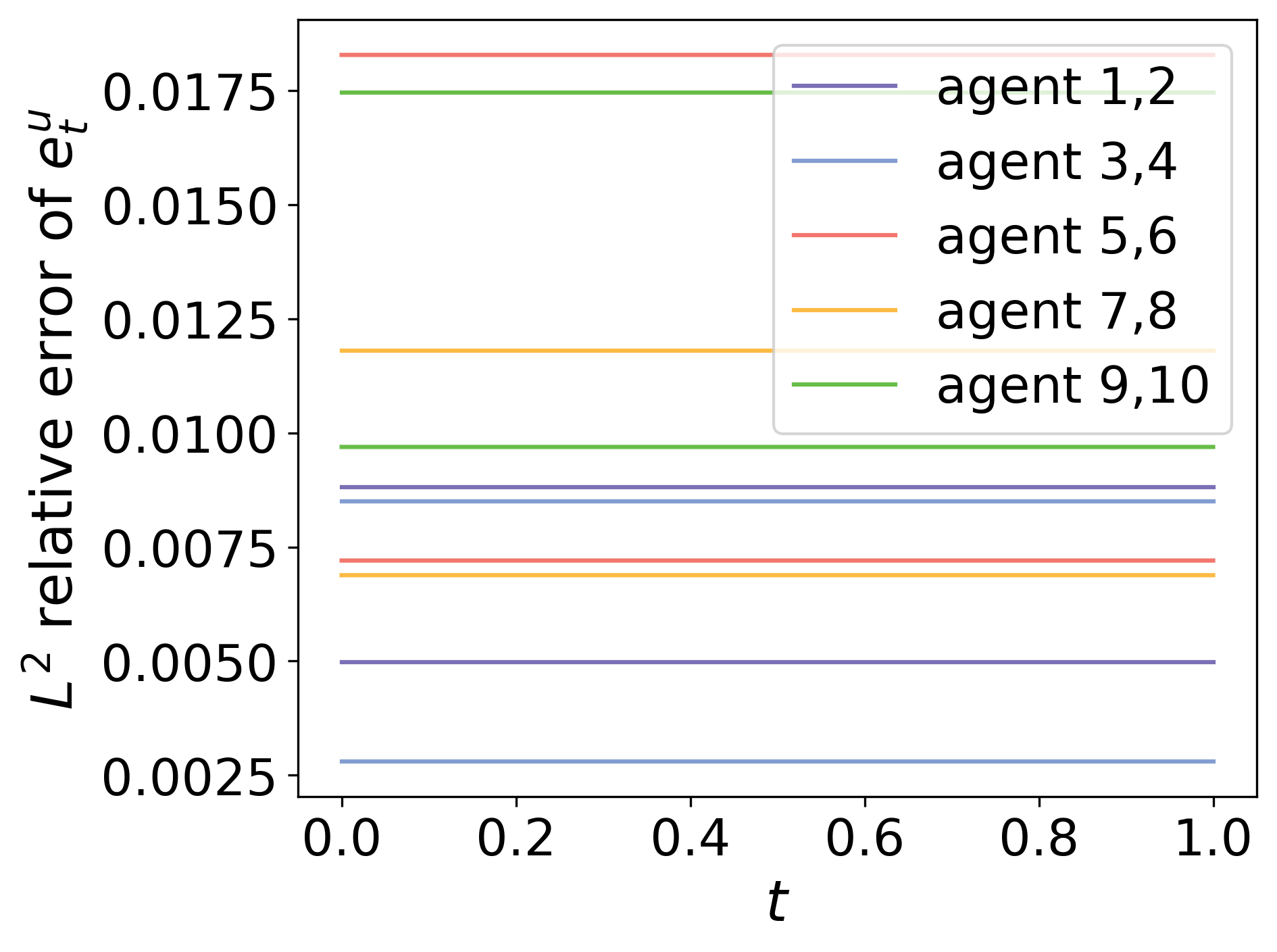}} 
  \caption{Optimal investment game with 10 agents under exponential utility which do not satisfy the condition of Theorem \ref{thm.exp}. (a) Plot of the \texttt{Error\_value} and the \texttt{Error\_control} with respect to the training iterations; and (b)--(c) $L^2$ relative error of the value function $e_t^v$ and the control $e_t^u$ with respect to the spatial variable $x$ at different time $t$ for each agent.}
  \label{fig.game_dissatisfy}
\end{figure}


\section{Conclusion} \label{sec.conclusion}
In this paper, we analyzed the multi-agent investment games in a jump diffusion market. On the theoretical side, we derive constant Nash equilibria and provide their existence and uniqueness conditions under exponential, power, and logarithmic utilities. On the numerical side, we propose a framework based on the actor-critic method in deep reinforcement learning to solve general stochastic control problems and multi-agent games with jumps, where both the diffusion and jump terms are controlled by the agents. The methods demonstrate accurate and robust numerical results in all examples including the Merton problem, the linear quadratic regulator problem, and multi-agent games of optimal investment under relative performance. 

We identify several avenues for future work. Firstly, regarding multi-agent games, we aim to enhance the framework by further leveraging parallel computing, thus reducing computational costs and ensuring good performance, even in exceptionally high-dimensional problems such as 100. Secondly, since our proposed algorithms are model-free and based on reinforcement learning, they allow for integration with real-world data without the need for calibrating the stock modeling. Finally, we are interested in conducting convergence analysis, similar to works such as \cite{gonon2021deep,zhou2024solving}, of the proposed algorithm. Although meaningful, such tasks present challenges.

\section*{Acknowledgements}
Part of this work was done during L. L.'s visit to the Department of Mathematics at the University of California, Santa Barbara. L. L. would like to express gratitude to the department for their hospitality.  

\bibliographystyle{siamplain}
\bibliography{references}

\appendix

\section{Proof of Corollary \ref{cor.unique_exp}} \label{sec.appendix_proof_exp}
  We denote
  \begin{equation}
    \begin{aligned}
      f_i(\pi_1,\pi_2,\cdots,\pi_n) := &\ \mu_i + \frac{1}{\delta_i}\sigma_i\theta_i\widehat{\pi\sigma} - \frac{1}{\delta_i}(1-\frac{\theta_i}{n})(\nu_i^2+\sigma_i^2)\pi_i - \lambda_0\beta_i - \lambda_i\alpha_i \\ 
      & + \lambda_0\beta_i e^{-\frac{1}{\delta_i}\left((1-\frac{\theta_i}{n})\pi_i\beta_i-\theta_i\widehat{\pi\beta}\right)} + \lambda_i\alpha_i e^{-\frac{1}{\delta_i}(1-\frac{\theta_i}{n})\pi_i\alpha_i} + \pi_i,
    \end{aligned}
  \end{equation}
  for $i\in\mathcal{I}$, then the nonlinear system \eqref{eq.thm_exp} can be written as $f_i(\pi_1,\pi_2,\cdots,\pi_n)=\pi_i$. Now we can compute its partial derivative 
  \begin{equation}
    \frac{\partial f_i}{\partial \pi_j} = \left\{ 
    \begin{aligned}
      & 1 - \frac{1}{\delta_i}(1-\frac{\theta_i}{n})\left((\nu_i^2+\sigma_i^2)+\lambda_0\beta_i^2 e^{-\frac{1}{\delta_i}\left((1-\frac{\theta_i}{n})\pi_i\beta_i-\theta_i\widehat{\pi\beta}\right)} + \lambda_i\alpha_i^2 e^{-\frac{1}{\delta_i}(1-\frac{\theta_i}{n})\pi_i\alpha_i}\right),\ j=i, \\ 
      & \frac{\theta_i\sigma_i\sigma_j}{n\delta_i} + \frac{1}{n\delta_i}\lambda_0\theta_i\beta_i\beta_j e^{-\frac{1}{\delta_i}\left((1-\frac{\theta_i}{n})\pi_i\beta_i-\theta_i\widehat{\pi\beta}\right)}, \ j\neq i.
    \end{aligned}
    \right.
  \end{equation}
  Due to $|\pi_i|\leq C$,  $|\beta_i|\leq\beta_0$, $|\alpha_i|\leq\alpha_0$, $0<\theta_i<1$, we have
  \begin{gather}
    \left| -\frac{1}{\delta_i}\left((1-\frac{\theta_i}{n})\pi_i\beta_i-\theta_i\widehat{\pi\beta}\right) \right|
    \leq \frac{1}{\delta_i}\left(1-\frac{\theta_i}{n}+\frac{n-1}{n}\theta_i\right)C\beta_0
    < \frac{2C\beta_0}{\delta_i}, \\ 
    \left| -\frac{1}{\delta_i}(1-\frac{\theta_i}{n})\pi_i\alpha_i \right| < \frac{C\alpha_0}{\delta_i},
  \end{gather}
  and furthermore, using condition \eqref{eq.unique_exp_cond1}, we have
  \begin{equation} \label{eq.appendix_exp_dfdi}
    \begin{aligned}
      & \left| \frac{1}{\delta_i}(1-\frac{\theta_i}{n})\left((\nu_i^2+\sigma_i^2)+\lambda_0\beta_i^2 e^{-\frac{1}{\delta_i}\left((1-\frac{\theta_i}{n})\pi_i\beta_i-\theta_i\widehat{\pi\beta}\right)} + \lambda_i\alpha_i^2 e^{-\frac{1}{\delta_i}(1-\frac{\theta_i}{n})\pi_i\alpha_i}\right) \right| \\ 
      & \leq \frac{1}{\underline{\delta}}\left(\overline{\nu}^2+\overline{\sigma}^2+\lambda_0\beta_0^2 e^{\frac{2C\beta_0}{\underline{\delta}}} + \overline{\lambda}\alpha_0^2 e^{\frac{C\alpha_0}{\underline{\delta}}}\right) < 1,
    \end{aligned}
  \end{equation}
  and
  \begin{equation} \label{eq.appendix_exp_dfdj}
      \left| \frac{\theta_i\sigma_i\sigma_j}{n\delta_i} + \frac{1}{n\delta_i}\lambda_0\theta_i\beta_i\beta_j e^{-\frac{1}{\delta_i}\left((1-\frac{\theta_i}{n})\pi_i\beta_i-\theta_i\widehat{\pi\beta}\right)} \right| 
      \leq \frac{\theta_i}{n\delta_i}\left(\sigma_i\overline{\sigma} + \lambda_0\beta_0^2e^{\frac{2C\beta_0}{\delta_i}}\right).
  \end{equation}
  Combining equations \eqref{eq.appendix_exp_dfdi} and \eqref{eq.appendix_exp_dfdj} gives the estimate of the Jacobian matrix
  \begin{equation}
    \begin{aligned}
      \left \| \left(\frac{\partial f_i}{\partial \pi_j}\right)_{ij} \right \|_{\infty} = & \max_{1\leq i\leq n} \left[\sum_{j=1}^n \left|\frac{\partial f_i}{\partial \pi_j}\right|\right] \\ 
      = & \max_{1\leq i\leq n} \left[ 1 - \frac{1}{\delta_i}(1-\frac{\theta_i}{n})\left((\nu_i^2+\sigma_i^2)+\lambda_0\beta_i^2 e^{-\frac{1}{\delta_i}\left((1-\frac{\theta_i}{n})\pi_i\beta_i-\theta_i\widehat{\pi\beta}\right)} \right.\right. \\
      & + \left.\left. \lambda_i\alpha_i^2 e^{-\frac{1}{\delta_i}(1-\frac{\theta_i}{n})\pi_i\alpha_i}\right) 
      + \sum_{j\neq i} \left| \frac{\theta_i\sigma_i\sigma_j}{n\delta_i} + \frac{1}{n\delta_i}\lambda_0\theta_i\beta_i\beta_j e^{-\frac{1}{\delta_i}\left((1-\frac{\theta_i}{n})\pi_i\beta_i-\theta_i\widehat{\pi\beta}\right)} \right| \right] \\ 
      \leq & \max_{1\leq i\leq n} \left[  1 - \frac{1}{\delta_i}(1-\frac{\theta_i}{n})(\nu_i^2+\sigma_i^2) + \frac{n-1}{n}\frac{\theta_i}{\delta_i}\left(\sigma_i\overline{\sigma} + \lambda_0\beta_0^2e^{\frac{2C\beta_0}{\delta_i}}\right) \right] \\ 
      \leq & \max_{1\leq i\leq n} \left[ 1 - \frac{1}{\delta_i}(1-\frac{\theta_i}{n})\left(\nu_i^2+\sigma_i^2-\sigma_i\overline{\sigma}-\lambda_0\beta_0^2e^{\frac{2C\beta_0}{\delta_i}}\right) \right] \\ 
      \leq & \max_{1\leq i\leq n} \left[ 1 - \frac{1}{2\overline{\delta}}\left(\underline{\nu}^2-\overline{\sigma}(\overline{\sigma}-\underline{\sigma})-\lambda_0\beta_0^2 e^{\frac{2C\beta_0}{\underline{\delta}}}\right) \right] \leq 1-\frac{K}{2\overline{\delta}}.
    \end{aligned}
  \end{equation}
  Here, because of condition \eqref{eq.unique_exp_cond2},
  \begin{equation}
    K:=\underline{\nu}^2-\overline{\sigma}(\overline{\sigma}-\underline{\sigma})-\lambda_0\beta_0^2 e^{\frac{2C\beta_0}{\underline{\delta}}} > 0.
  \end{equation}
For any $\overrightarrow{\pi}^0\in\{(\pi_1,\cdots,\pi_n): |\pi_i|\leq C, \forall i\in\mathcal{I}\}$, construct a sequence $\overrightarrow{\pi}^{k+1}=f(\overrightarrow{\pi}^{k})$, then 
  \begin{gather}
    \left \| \overrightarrow{\pi}^{k+1}-\overrightarrow{\pi}^{k} \right \|_{\infty} = \left \| f(\overrightarrow{\pi}^{k})-f(\overrightarrow{\pi}^{k-1}) \right \|_{\infty} \leq L\left \| \overrightarrow{\pi}^{k}-\overrightarrow{\pi}^{k-1} \right \|_{\infty} , \ L=1-\frac{K}{2\overline{\delta}}\in(0,1).
  \end{gather}
  Therefore, $\{\overrightarrow{\pi}^{k}\}_{k=0}^\infty$ is a Cauchy sequence, with a limit $\overrightarrow{\pi}^*$, and $\overrightarrow{\pi}^*$ satisfies \eqref{eq.thm_exp}. Thus, the nonlinear system \eqref{eq.thm_exp} has a solution in $\{(\pi_1,\cdots,\pi_n): |\pi_i|\leq C, \forall i\in\mathcal{I}\}$.

  On the other hand, if the system \eqref{eq.thm_exp} has two different solutions $\overrightarrow{\pi}^*, \overrightarrow{\pi}^{**}$ in $\{(\pi_1,\cdots,\pi_n): |\pi_i|\leq C, \forall i\in\mathcal{I}\}$, then 
  \begin{equation}
    \left \| \overrightarrow{\pi}^*-\overrightarrow{\pi}^{**} \right \|_{\infty} = \left \| f(\overrightarrow{\pi}^*)-f(\overrightarrow{\pi}^{**}) \right \|_{\infty} \leq L\left \| \overrightarrow{\pi}^*-\overrightarrow{\pi}^{**} \right \|_{\infty} < \left \| \overrightarrow{\pi}^*-\overrightarrow{\pi}^{**} \right \|_{\infty}, 
  \end{equation}
  which is a contradiction.

\section{Proof of Corollary \ref{cor.unique_power}} \label{sec.appendix_proof_power}
  We denote
  \begin{equation}
    \begin{aligned}
      f_i(\pi_1,\pi_2,\cdots,\pi_n) := &\ \mu_i + (\nu_i^2+\sigma_i^2)\left(p_i(1-\frac{\theta_i}{n})-1\right)\pi_i-p_i\theta_i\sigma_i\widehat{\pi\sigma} - \lambda_i\alpha_i-\lambda_0\beta_i \\ 
      & + \lambda_i\alpha_i(1+\pi_i\alpha_i)^{p_i(1-\frac{\theta_i}{n})-1} + \lambda_0\beta_i\frac{(1+\pi_i\beta_i)^{p_i(1-\frac{\theta_i}{n})-1}}{\widetilde{1+\pi\beta}^{p_i\theta_i}} + \pi_i,
    \end{aligned}
  \end{equation}
  for $i\in\mathcal{I}$, then the nonlinear system \eqref{eq.thm_power} can be written as $f_i(\pi_1,\pi_2,\cdots,\pi_n)=\pi_i$. Now we can compute its partial derivative 
  \begin{small}
  \begin{equation}
    \frac{\partial f_i}{\partial \pi_j} = \left\{ 
      \begin{aligned}
        & 1 - \left(1-p_i(1-\frac{\theta_i}{n})\right)\left((\nu_i^2+\sigma_i^2) + \lambda_i\alpha_i^2(1+\pi_i\alpha_i)^{p_i(1-\frac{\theta_i}{n})-2} + \lambda_0\beta_i^2\frac{(1+\pi_i\beta_i)^{p_i(1-\frac{\theta_i}{n})-2}}{\widetilde{1+\pi\beta}^{p_i\theta_i}}\right),\ j=i, \\ 
        & -\frac{p_i\theta_i\sigma_i\sigma_j}{n} - \frac{\lambda_0p_i\theta_i}{n}\frac{\beta_i\beta_j}{1+\pi_j\beta_j}\frac{(1+\pi_i\beta_i)^{p(1-\frac{\theta_i}{n})-1}}{\widetilde{1+\pi\beta}^{p_i\theta_i}}, \ j\neq i.
      \end{aligned}
    \right.
  \end{equation}
  \end{small}
  Due to $|\pi_i|\leq C\leq1$, $|\alpha_i|\leq\alpha_0<1$, $|\beta_i|\leq\beta_0<1$, $0<p_i<1$, $0<\theta_i<1$, we have 
  \begin{gather}
    \left|1+\pi_i\alpha_i\right|^{p_i(1-\frac{\theta_i}{n})-2} \leq \left(\frac{1}{1-C\alpha_0}\right)^{2-p_i(1-\frac{\theta_i}{n})} \leq \left(\frac{1}{1-C\alpha_0}\right)^{2-0.5\underline{p}}, \\ 
    \left|\frac{(1+\pi_i\beta_i)^{p_i(1-\frac{\theta_i}{n})-2}}{\widetilde{1+\pi\beta}^{p_i\theta_i}}\right| \leq \left(\frac{1}{1-C\beta_0}\right)^{2-p_i+p_i\theta_i} \leq \left(\frac{1}{1-C\beta_0}\right)^2,
  \end{gather}
  and furthermore, using condition \eqref{eq.unique_power_cond1}, we have
  \begin{equation} \label{eq.appendix_power_dfdi}
    \begin{aligned}
      & \left(1-p_i(1-\frac{\theta_i}{n})\right)\left((\nu_i^2+\sigma_i^2) + \lambda_i\alpha_i^2(1+\pi_i\alpha_i)^{p_i(1-\frac{\theta_i}{n})-2} + \lambda_0\beta_i^2\frac{(1+\pi_i\beta_i)^{p_i(1-\frac{\theta_i}{n})-2}}{\widetilde{1+\pi\beta}^{p_i\theta_i}}\right) \\
      & \leq (1-0.5\underline{p})\left(\overline{\nu}^2+\overline{\sigma}^2+\overline{\lambda}\frac{\alpha_0^2}{(1-C\alpha_0)^{2-0.5\underline{p}}} + \lambda_0\frac{\beta_0^2}{(1-C\beta_0)^2}\right) < 1,
    \end{aligned}
  \end{equation}
  and meanwhile
  \begin{gather}
    \left|\frac{\beta_j}{1+\pi_j\beta_j}\right| = \frac{1}{\left|\frac{1}{\beta_j}+\pi_j\right|} \leq \frac{1}{\frac{1}{|\beta_j|}-|\pi_j|} \leq \frac{1}{\frac{1}{\beta_0}-C} = \frac{\beta_0}{1-C\beta_0}, \\ 
    \sum_{j\neq i} \left|\frac{\partial f_i}{\partial \pi_j}\right| \leq \frac{n-1}{n}p_i\theta_i\left(\sigma_i\overline{\sigma} + \lambda_0|\beta_i|\frac{\beta_0}{1-C\beta_0}\frac{(1+\pi_i\beta_i)^{p_i(1-\frac{\theta_i}{n})-1}}{\widetilde{1+\pi\beta}^{p_i\theta_i}}\right). \label{eq.appendix_power_dfdj}
  \end{gather}
  Combining equations \eqref{eq.appendix_power_dfdi} and \eqref{eq.appendix_power_dfdj} brings the estimate of the Jacobian matrix
  \begin{equation}
    \begin{aligned}
      \left \| \left(\frac{\partial f_i}{\partial \pi_j}\right)_{ij} \right \|_{\infty} = &\ \max_{1\leq i\leq n} \left[\sum_{j=1}^n \left|\frac{\partial f_i}{\partial \pi_j}\right|\right] \\ 
      = &\ \max_{1\leq i\leq n} 1 - \left[\left(1-p_i(1-\frac{\theta_i}{n})\right)\left((\nu_i^2+\sigma_i^2) + \lambda_i\alpha_i^2(1+\pi_i\alpha_i)^{p_i(1-\frac{\theta_i}{n})-2} \right.\right.\\
      & + \left. \lambda_0\beta_i^2\frac{(1+\pi_i\beta_i)^{p_i(1-\frac{\theta_i}{n})-2}}{\widetilde{1+\pi\beta}^{p_i\theta_i}}\right) \\
      & - \left. \sum_{j\neq i}\left|-\frac{p_i\theta_i\sigma_i\sigma_j}{n} - \frac{\lambda_0p_i\theta_i}{n}\frac{\beta_i\beta_j}{1+\pi_j\beta_j}\frac{(1+\pi_i\beta_i)^{p_i(1-\frac{\theta_i}{n})-1}}{\widetilde{1+\pi\beta}^{p_i\theta_i}}\right|\right].
    \end{aligned}
  \end{equation}
  Using $1-p_i(1-\frac{\theta_i}{n})\geq\frac{n-1}{n}p_i\theta_i$ and because of the condition \eqref{eq.unique_power_cond2}:
  \begin{equation}
    \begin{aligned}
      \left \| \left(\frac{\partial f_i}{\partial \pi_j}\right)_{ij} \right \|_{\infty} \leq &\ \max_{1\leq i\leq n} 1-(1-p_i(1-\frac{\theta_i}{n}))\left[\nu_i^2+\sigma_i(\sigma_i-\overline{\sigma}) \right. \\ 
      & + \left.\lambda_0|\beta_i|\frac{(1+\pi_i\beta_i)^{p_i(1-\frac{\theta_i}{n})-2}}{\widetilde{1+\pi\beta}^{p\theta_i}}\left(|\beta_i|-(1+\pi_i\beta_i)\frac{\beta_0}{1-C\beta_0}\right) \right] \\
      \leq &\ 1 - (1-\overline{p})\left(\underline{\nu}^2-\overline{\sigma}(\overline{\sigma}-\underline{\sigma})-\lambda_0\beta_0^2\frac{1}{(1-C\beta_0)^2}\frac{1+C\beta_0}{1-C\beta_0}\right) \\ 
      = & \ 1 - (1-\overline{p})K < 1.
    \end{aligned}
  \end{equation}
  Here, because of condition \eqref{eq.unique_power_cond3},
  \begin{equation}
    K := \underline{\nu}^2-\overline{\sigma}(\overline{\sigma}-\underline{\sigma})-\lambda_0\beta_0^2\frac{1}{(1-C\beta_0)^2}\frac{1+C\beta_0}{1-C\beta_0} > 0.
  \end{equation}Similar to the proof in Appendix \ref{sec.appendix_proof_exp}, $f$ is a contraction mapping on $\{(\pi_1,\cdots,\pi_n): |\pi_i|\leq C, \forall i\in\mathcal{I}\}$, therefore, the system \eqref{eq.thm_power} has a unique solution in $\{(\pi_1,\cdots,\pi_n): |\pi_i|\leq C, \forall i\in\mathcal{I}\}$.

\section{Proof of Theorem \ref{thm.log}} \label{sec.appendix_proof_log}
  We denote the function of one variable
  \begin{equation}
    f_i(\pi_i) = \mu_i-(\nu_i^2+\sigma_i^2)\pi_i^*+\lambda_i\alpha_i\left(\frac{1}{1+\pi_i^*\alpha_i}-1\right)+\lambda_0\beta_i\left(\frac{1}{1+\pi_i^*\beta_i}-1\right),
  \end{equation}
  for $i\in\mathcal{I}$, then the equation \eqref{eq.thm_log} can be written as $f_i(\pi_i)=0$. We compute its derivative
  \begin{equation}
    f'_i(\pi_i) = -(\nu_i^2+\sigma_i^2)-\frac{\lambda_i\alpha_i^2}{(1+\pi_i\alpha_i)^2}-\frac{\lambda_0\beta_i^2}{(1+\pi_i\beta_i)^2} < 0,
  \end{equation}
  on $\Omega:=\{\pi_i:1+\pi_i\alpha_i>0,1+\pi_i\beta_i>0\}$, because $\nu_i^2+\sigma_i^2>0$. The shape of the region $\Omega$ can take several possible forms, including:
  
  \vspace{0.5em}
  (a) $\alpha_i=\beta_i=0$: $\Omega=\R$, and combining with $f_i(-\infty)=+\infty,f_i(+\infty)=-\infty$ and the strictly monotonic decreasing property of $f_i(\pi_i)$ on $\Omega$, we can conclude that $f_i(\pi_i)=0$ has a unique solution in $\Omega$;

  (b) $\alpha_i=0$ or $\beta_i=0$: Without loss of generality, let's assume $\alpha_i=0$. Then if $\beta_i>0$, we have $\Omega=(-1/\beta_i,+\infty)$, $f_i(-1/\beta_i+)=+\infty,f_i(+\infty)=-\infty$, so $f_i(\pi_i)=0$ has a unique solution in $\Omega$; else $\beta_i<0$, we have $\Omega=(-\infty,-1/\beta_i)$, $f_i(-\infty)=+\infty,f_i(-1/\beta_i-)=-\infty$, so $f_i(\pi_i)=0$ also has a unique solution in $\Omega$;

  (c) $\alpha_i\neq0,\beta_i\neq0$:

  (c.1) $\alpha_i=\beta_i$: the derivation is same with case (b);

  (c.2) $\alpha_i\neq\beta_i$: Without loss of generality, let's assume $\alpha_i>\beta_i$. Then:
  
  (c.2.1) $\beta_i<\alpha_i<0$: $\Omega=(-\infty,-1/\beta_i)$, and combining with $f_i(-\infty)=+\infty,f_i(-1/\beta_i-)=-\infty$, we can conclude that $f_i(\pi_i)=0$ has a unique solution in $\Omega$;

  (c.2.2) $\beta_i<0<\alpha_i$: $\Omega=(-1/\alpha_i,-1/\beta_i)$, and combining with $f_i(-1/\alpha+)=+\infty,f_i(-1/\beta_i-)=-\infty$, we can conclude that $f_i(\pi_i)=0$ has a unique solution in $\Omega$;

  (c.2.3) $0<\beta_i<\alpha_i$: $\Omega=(-1/\alpha_i,+\infty)$, and combining with $f_i(-1/\alpha_i+)=+\infty,f_i(+\infty)=-\infty$, we can conclude that $f_i(\pi_i)=0$ has a unique solution in $\Omega$;

  \vspace{0.5em}
  Combining the above discussion, equation \eqref{eq.thm_log} has a unique solution $\pi_i^*$ that satisfies $1+\pi_i^*\alpha_i>0$ and $1+\pi_i^*\beta_i>0$.

\end{document}